\crefname{section}{§}{§§}
\Crefname{section}{§}{§§}
\theoremstyle{definition}
\newtheorem{thm}{Theorem}[section]
\newtheorem*{thm*}{Theorem}
\newtheorem{criterion}[thm]{Criterion}
\newtheorem{cor}[thm]{Corollary}
\newtheorem{claim}[thm]{Claim}
\newtheorem{prop}[thm]{Proposition}
\newtheorem{lem}[thm]{Lemma}
\newtheorem{dfn}[thm]{Definition}
\newtheorem{exmd}[thm]{Example}
\newcounter{tmp}
\newcommand\blfootnote[1]{%
  \begingroup
  \renewcommand\thefootnote{}\footnote{#1}%
  \addtocounter{footnote}{-1}%
  \endgroup
}
\newcommand{\acts}{\curvearrowright}
\newcommand{\act}{\boldsymbol{.}}
\newcommand{\al }{\alpha  }
\newcommand{\Ga }{\Gamma  }
\newcommand{\Del}{\Delta  }
\newcommand{\tht}{\theta  }
\newcommand{\la }{\lambda }
\newcommand{\eps}{\epsilon}
\newcommand{\vp }{\varphi }
\newcommand{\bb}[1]{\mathbb{#1}}
\renewcommand{\cal}[1]{\mathcal{#1}}
\newcommand{\GG}{G\times G}
\newcommand{\XX}{X\times X}
\newcommand{\Gm}[1][] { 
	\ifthenelse{\isempty{#1}} {
		\mathbb{G}_{m}
	} {
		\mathbb{G}_{m}\left(#1\right)
	}
}
\newcommand{\Gmt}[1][]{\widetilde{\Gm[#1]}}
\newcommand{\Aff}[2][] {
	\ifthenelse{\isempty{#1}} {
		\mathbb{A}^{#2}
	} {
		\mathbb{A}^{#2}_{#1}
	}
}
\newcommand{\GGt}{\widetilde{G\times G}}
\newcommand{\gl}[2][] {
	\ifthenelse{\isempty{#1}} {
		\mathfrak{gl}_{#2}
	} {
		\mathfrak{gl}_{#2}\left(#1 \right)
	}
}
\newcommand{\Pgl}[2][]{\mathbb{P}\gl[#1]{#2}}
\newcommand{\GL}[2][] {
	\ifthenelse{\isempty{#1}} {
		GL_{#2}
	} {
		GL_{#2}\left(#1\right)
	}
}
\newcommand{\PGL}[2][]{P\GL[#1]{#2}}
\newcommand{\ZZ}[1][] {
	\ifthenelse{\isempty{#1}} {
		\mathbb{Z}
	} {
		\sfrac{\mathbb{Z}}{#1}
	}
}
\newcommand{\An}{\operatorname{An}}
\newcommand{\Dens}[1]{\operatorname{Dens}\left(#1\right)}
\newcommand{\swap}{\operatorname{swap}}
\newcommand{\id}[1][] {
	\ifthenelse{\isempty{#1}} {
		\operatorname{id}
	} {
		\operatorname{id}_{#1}
	}
}
\newcommand{\Hom}[3][] {
	\ifthenelse{\isempty{#1}} {
		\operatorname{Hom}\left(#2,#3\right)
	} {
		\operatorname{Hom}_{#1}\left(#2,#3\right)
	}
}
\newcommand{\End}[2][] {
	\ifthenelse{\isempty{#1}} {
		\operatorname{End}\left(#2\right)
	} {
		\operatorname{End}_{#1}\left(#2\right)
	}
}
\newcommand{\Stab}[2][] {
	\ifthenelse{\isempty{#1}} {
		\operatorname{Stab}\left(#2\right)
	} {
		\operatorname{Stab}_{#1}\left(#2\right)
	}
}
\newcommand{\Sch}[2][] {
	\ifthenelse{\isempty{#1}} {
		\mathcal{S}\left(#2\right)
	} {
		\mathcal{S}_{#1}\left(#2\right)
	}
}
\newcommand{\Dist}[2][] {
	\ifthenelse{\isempty{#1}} {
		\mathcal{S}^{*}\left(#2\right)
	} {
		\mathcal{S}_{#1}^{*}\left(#2\right)
	}
}
\newcommand{\supp}[1][] { 
	\ifthenelse{\isempty{#1}} {
		\operatorname{supp}
	} {
		\operatorname{supp}\left(#1\right)
	}
}
\newcommand{\genfun}[2][] {
	\ifthenelse{\isempty{#1}} {
		C^{-\infty}\left(#2\right)
	} {
		C^{-\infty}_{#1}\left(#2\right)
	}
}
\newcommand{\Sm}[2][] {
	\ifthenelse{\isempty{#1}} {
		C^{\infty}\left(#2\right)
	} {
		C_{#1}^{\infty}\left(#2\right)
	}
}
\newcommand{\SmM}[2][] {
	\ifthenelse{\isempty{#1}} {
		\mu^{\infty}\left(#2\right)
	} {
		\mu_{#1}^{\infty}\left(#2\right)
	}
}
\newcommand{\fourier}[1][] {
	\ifthenelse{\isempty{#1}} {
		\mathcal{F}
	}{
		\mathcal{F}\left(#1\right)
	}
}
\newcommand{\WF}[2][] {
	\ifthenelse{\isempty{#1}} {
		\operatorname{WF}\left(#2\right)
	} {
		\operatorname{WF}_{#1}\left(#2\right)
	}
}
\newcommand{\pr}{\operatorname{pr}}
\newcommand{\sm}[1]{{#1}^{\operatorname{sm}}}
\newcommand{\Fr}{\operatorname{Fr}}
\newcommand{\adj}[1][] {
	\ifthenelse{\isempty{#1}} {
		\operatorname{adj}
	} {
		\operatorname{adj}\left(#1\right)
	}
}
\newcommand{\normal}[2][]{
	\ifthenelse{\isempty{#1}} {
		\mathcal{N}_{#2}
	} {
		\mathcal{N}^{#1}_{#2}
	}
}
\begin{document} 
\setlength{\parindent}{0cm}

\title{
	$\Pgl{2}$ is Multiplicity-Free as a $PGL_2\times PGL_2$-Variety \\
	\large MSc thesis completed at Weizmann Institute of Science under the guidance of Prof. Dmitry Gourevitch
}
\author{Shai Keidar}
\date{December 2019}

\begin{titlepage}
\maketitle
\blfootnote{This research was supported by the ERC, StG grant number 637912, grant numbers 249/17}
\begin{abstract}
\setlength{\parindent}{0cm}
Let $F$ be a non-Archimedean local field. Let $G$ be an algebraic group over $F$. A $G$-variety $X$ defined over $F$ is said to be multiplicity-free if for any admissible irreducible representation $\pi$ of $G\left(F\right)$ the following takes place:
\[
\dim\Hom[G\left(F\right)]{\Sch{X\left(F\right)}}{\pi} \le 1
\]
where $\Sch{X\left(F\right)}=\Sm[c]{X\left(F\right)}$ is the space of Schwartz functions on $X\left(F\right)$.
In this thesis we prove that $\Pgl[F]{2}$ is multiplicity-free as a $\PGL[F]{2}\times\PGL[F]{2}$-variety.
\end{abstract}
\end{titlepage}

\tableofcontents
\newpage

\section{Introduction}
\begingroup
\renewcommand\thethm{\Alph{thm}}
This thesis deals with relative representation theory and specifically with non-commutative harmonic analysis on a certain non-homogeneous spherical variety. 
The field of relative representation theory is a prosperous realm for research. A lot of research on symmetric varieties took place in the 1980s to the 2000s (see for example \cite{sym5}, \cite{sym3}, \cite{sym4}, \cite{sym6}, \cite{sym7}, \cite{sym1}, \cite{sym2}, \cite{sym8}) and on toric varieties in the 1990s and 2000s (see \cite{tor1}, \cite{tor3}, \cite{tor2}). Nowadays a main point of interest is the study of spherical varieties, which generlizes both fields (see \cite{sph5}, \cite{sph3}, \cite{sph2}, \cite{sph11}, \cite{sph4}, \cite{sph1}, \cite{sph6}, \cite{sph10}, \cite{sph8}).

\subsection{Spherical Varieties and Multiplicity-Free Varieties}
Let $F$ be a non-Archimedean local field and $\overline{F}$ its algebraic closure. Let $G$ be an algebraic group over $F$ and $B$ a Borel subgroup of $G$.
A $G$-variety $X$ is said to be spherical if it has an open $B$-orbit. A homogenous $G$-variety $X$ is spherical if and only if for any irreducible algebraic representation $\pi$ of $G\left(\overline{F}\right)$ and any $G$-equivariant line bundle $\mathcal{L}$:
\[
\dim\Hom[G\left(\overline{F}\right)]{H^{0}\left(X\left(\overline{F}\right), \mathcal{L}_{\overline{F}}\right)}{\pi} \le 1
\]
in the case $X$ is quasiaffine then this condition is equivalent to the following: For any algebraic representation $\pi$ of $G\left(\overline{F}\right)$:
\[\dim\Hom[G\left(\overline{F}\right)]{\overline{F}\left[X\right]}{\pi} \le 1\]
For a proof see e.g. \cite{sph2} - Theorem 25.1.

A $G$-variety $X$ defined over $F$ is said to be multiplicity-free if for any admissible irreducible representation $\pi$ of $G\left(F\right)$ the following takes place:
\[
\dim\Hom[G\left(F\right)]{\Sch{X\left(F\right)}}{\pi} \le 1
\]
where $\Sch{X\left(F\right)}=\Sm[c]{X\left(F\right)}$ is the space of Schwartz functions on $X\left(F\right)$.
We will also say that $X$ is weakly multiplicity-free if for any admissible irreducible representation $\pi$ of $G\left (F\right )$ the following holds:
\[
\dim\Hom[G\left(F\right)]{\Sch{X\left(F\right)}}{\pi} \cdot
\dim\Hom[G\left(F\right)]{\Sch{X\left(F\right)}}{\widetilde{\pi}}
\le 1
\]
where $\widetilde{\pi}$ is the contragradeint representation of $\pi$.

Multiplicity-free homogenous varieties and the connection between them and homogenous spherical varieties have been the subject to a lot of research. It is known \cite{sphFinMul1},\cite{sphFinMul2} that a real homogenous spherical variety is of finite-multiplicity. The same is conjectured for the non-Archimedean case, and proven in many cases (\cite{pAdicFinMul1}, \cite{pAdicFinMul2}).
A homogenous variety $X=\sfrac{G}{H}$ is multiplicity-free if and only if $\left(G,H\right)$ is a Gelfand pair (see e.g. \cite{AG2} for preliminaries on the notion of Gelfand Pairs).

A homogenous spherical $G$-variety $X$ admits a natural wonderful compactification: A projective $G$-spherical variety $\overline{X}$ containing $X$ as an open dense subvariety such that the closure of each orbit is smooth (\cite{wondComp1}, \cite{wondComp3}, \cite{wondComp2}). We are interested in the wonderful compactification of $G$ as a $G\times G$-variety.

\subsection{Our Contribution}
We look at the wonderful compactificaion of $G=\PGL{2}$ which is $\overline{G}=\Pgl{2}$ with the $\PGL{2}\times\PGL{2}$-action given by left and right multiplication. We prove the following theorem:
\begin{thm}
\label{mainThm}
For a non-Archimedean local field $F$, $\Pgl[F]{2}$ is multiplicity-free as a $\PGL[F]{2}\times \PGL[F]{2}$-variety.
\end{thm}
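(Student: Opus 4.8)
The plan is to use the Gelfand--Kazhdan method: to prove that $\Sch{\Pgl[F]{2}(F)}$ has at most one-dimensional $\PGL_2\times\PGL_2$-equivariant functionals into any irreducible admissible $\pi$, it suffices to exhibit an anti-involution $\sigma$ of $G\times G$ preserving $\overline{G}$ such that every $(G\times G)$-invariant distribution on $\overline{G}(F)$ is fixed by $\sigma$. The natural candidate is the transpose-inverse map $g\mapsto g^{t}$ on $\Pgl{2}$, together with the swap of the two factors on $\PGL_2\times\PGL_2$; since on $\PGL_2$ transpose-inverse is an inner automorphism, this reduces to showing that every bi-$\PGL_2$-invariant distribution on $\overline{G}(F)$ is invariant under the transpose map $g \mapsto g^t$ on the variety, which on $\PGL_2$-matrices is $\swap$-conjugate to $g\mapsto g^{-1}$. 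So the target is: every $(G\times G)$-invariant distribution on $\overline{G}(F)$ is invariant under the involution $\tau\colon g\mapsto g^{t}$.

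The key steps, in order, would be: (1) Stratify $\overline{G}=\Pgl{2}$ into $(G\times G)$-orbits. The open orbit is $G$ itself; the boundary $\overline{G}\setminus G$ is the locus of rank-one $2\times2$ matrices up to scalar, which is $\mathbb{P}^1\times\mathbb{P}^1$ and a single $(G\times G)$-orbit isomorphic to $(G\times G)/P$ for a parabolic $P$; and there is the zero... wait, in $\Pgl{2}$ the zero matrix is excluded, so just these two orbits. (2) Verify $\tau$ preserves each orbit (clear, since rank and conjugacy class of a matrix are transpose-invariant, and on $G$ transpose-inverse is inner). (3) Use the standard Bernstein localization principle / exact sequence argument: it is enough to show each stratum ``carries no $\tau$-anti-invariant distributions,'' i.e. for the open orbit $G$ one needs $(G\times G)$-invariant distributions on $G(F)$ to be $\tau$-invariant, which is the classical Gelfand--Kazhdan theorem for $GL_2$/$PGL_2$ (every invariant distribution under conjugation... here bi-invariance, handled by Gelfand--Kazhdan: invariant distributions on $G$ under $g\mapsto xgy$ are supported on conjugation-type behavior, transpose-invariant). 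For the closed orbit $\mathbb{P}^1\times\mathbb{P}^1 \cong (G\times G)/P$, one must show that the $(G\times G)$-invariant distributions on $(G\times G)/P(F)$ (which, if nonzero, are related to the modular character mismatch) are $\tau$-invariant; often this stratum contributes nothing because $\Delta_P$ is nontrivial, or one checks it directly. (4) One must also handle the normal-bundle twists: the Bernstein argument really requires checking $\tau$-invariance not of distributions on the closed stratum itself but of distributions on it valued in symmetric powers of the conormal bundle $\normal{}{Z}$; so for each $k$ one needs that $(G\times G)$-invariant sections of $\Sym^k \normal{}{Z}^*$-valued distributions on $\mathbb{P}^1\times\mathbb{P}^1$ are $\tau$-invariant, and this is where the action of $\tau$ on the conormal directions must be computed and shown to be trivial (or to force vanishing).

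The main obstacle I expect is precisely step (4): the boundary contribution and its conormal twists. On the open orbit the result is classical, but the wonderful compactification is genuinely non-homogeneous, so one cannot avoid analyzing distributions supported on the boundary $\mathbb{P}^1\times\mathbb{P}^1$ together with all the conormal jets. The group $\PGL_2\times\PGL_2$ acts on $\mathbb{P}^1\times\mathbb{P}^1$ transitively with stabilizer a Borel-type subgroup $B\times B$ intersected appropriately (actually the stabilizer of a point on the boundary is larger — it is $P\times P$ modulo the diagonal unipotent, I would need to pin this down), and the conormal bundle is a specific line bundle on which $\tau$ acts; one must show that for every $k\geq 0$, either there are no invariant $\Sym^k\normal{}{}^*$-valued distributions, or they are automatically $\tau$-fixed. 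I would handle this by an explicit local computation near a boundary point in suitable coordinates on $\Pgl{2}$ (writing the matrix near a rank-one point and expanding), identifying the $\tau$-action on the transverse coordinate, and invoking Frobenius descent / Bernstein's theorem on invariant distributions on $l$-spaces together with a count of the relevant characters; alternatively, a Gelfand-pair argument for the pair $(\PGL_2\times\PGL_2, P\times P)$-type subgroup twisted by the conormal character. Getting all the twists to line up is the crux of the argument.
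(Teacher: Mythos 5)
Your argument breaks down at the very first step: you apply the Gelfand--Kazhdan criterion to the wrong space. For a \emph{non-homogeneous} $\GG$-variety such as $\overline{G}=\Pgl[F]{2}$, the criterion (in the generalized form of \cref{GK}) requires controlling the invariant distributions on the \emph{doubled} space: one must show $\Dist{\XX}^{\GG}\subseteq\Dist{\XX}^{\swap}$, where $\GG$ acts diagonally on $\XX$ (this is exactly \cref{C}). The reason is that multiplicities are governed by $\GG$-equivariant kernels, i.e.\ by elements of $\Dist{\XX}^{\GG}$, via the pairing of a functional into $\pi$ with a functional into $\widetilde{\pi}$. Your proposed sufficient condition --- that every $(G\times G)$-bi-invariant distribution on $\overline{G}\left(F\right)$ itself be transpose-invariant --- is not a multiplicity-one criterion at all: $\Dist{\overline{G}\left(F\right)}^{\GG}$ is a tiny space (essentially Haar measure on the open orbit together with possible transversal contributions along the closed orbit), and its $\tau$-invariance puts no bound on $\dim\Hom[\GG]{\Sch{\overline{G}\left(F\right)}}{\pi}$. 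Even in the classical homogeneous Gelfand-pair setting the relevant hypothesis is about \emph{conjugation}-invariant distributions on the group (the unfolding of diagonal invariance on the doubled space), never about bi-invariant ones. As a consequence, your entire stratification analysis --- the two orbits of $\overline{G}$, the closed orbit $\mathbb{P}^1\times\mathbb{P}^1$, and the $\operatorname{Sym}^k$-conormal twists there --- is carried out on a $3$-dimensional space, whereas the actual problem lives on the $6$-dimensional $\XX$, which has seven orbits. There, most boundary orbits are indeed disposed of by the Bernstein--Gelfand--Kazhdan--Zelevinsky criterion (\cref{BGKZ}, a modular-character mismatch), but the remaining locally closed piece $C=P\cup W$ is precisely where this orbit-by-orbit bookkeeping fails; handling it requires a genuinely different idea, namely the cross method: a wave-front-set argument allowing restriction of the invariant distributions to a plane $\Pi\cong\Aff[F]{2}$ meeting $C$ in the cross $\left\{xy=0\right\}$, Frobenius descent to make this restriction injective, and the known vanishing $\Dist{\left\{xy=0\right\}}^{\Gmt[F],\chi}=0$ (\cref{realCross}). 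None of this is visible from your setup.

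A second, smaller gap: even with the correct criterion, Gelfand--Kazhdan only yields \emph{weak} multiplicity-freeness, i.e.\ the bound on $\dim\Hom[\GG]{\Sch{X}}{\pi}\cdot\dim\Hom[\GG]{\Sch{X}}{\widetilde{\pi}}$. To obtain the theorem as stated one must additionally show that these two dimensions agree, which is done in \cref{AandBareEquivalent} using the $\GL{n}$-automorphism $s$ with $\pi^{s}\cong\widetilde{\pi}$ (\cref{repOfGLn}) together with its compatibility with the variety $\Pgl[F]{n}$. Your remark that transpose-inverse is inner on $\PGL[F]{2}$ is the germ of this reduction, but it has to be implemented as an isomorphism of the $\Hom$-spaces, not merely invoked to modify the anti-involution.
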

We hope this thesis will yield more study on harmonic analysis of non-homogenous spherical varieties.

\subsection{Structure of The Proof}
In \cref{weakIFFMulFree} we will prove, using representation theory of $\GL{2}$, that proving \cref{mainThm} is equivalent to proving the following theorem:
\begin{thm}
\label{almostMainThm}
For a non-Archimedean local field $F$, $\Pgl[F]{2}$ is weakly multiplicity-free as a $\PGL[F]{2}\times \PGL[F]{2}$-variety.
\end{thm}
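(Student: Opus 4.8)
\emph{Plan.} I would derive \cref{almostMainThm} from the Gelfand--Kazhdan distributional criterion, applied to the transpose involution $\tau([A])=[A^{t}]$ of $\Pgl{2}$. Because $(gAh^{-1})^{t}=(h^{-1})^{t}A^{t}g^{t}$ and the automorphism $g\mapsto(g^{-1})^{t}$ of $\PGL{2}$ is \emph{inner} (it is conjugation by $w=\left(\begin{smallmatrix}0&1\\-1&0\end{smallmatrix}\right)$), one has $\tau\bigl((g,h)\act x\bigr)=\sigma(g,h)\act\tau(x)$ with $\sigma(g,h)=\bigl((h^{-1})^{t},(g^{-1})^{t}\bigr)$, which is the swap of the two factors composed with an inner automorphism. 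Hence for an irreducible admissible $\pi=\pi_{1}\boxtimes\pi_{2}$ one has $\pi\circ\sigma\cong\pi_{2}\boxtimes\pi_{1}$, and pullback along $\tau$ identifies $\Sch{\Pgl[F]{2}}$ with the module obtained from it by twisting the action by $\sigma$; in particular $\dim\Hom[G(F)\times G(F)]{\Sch{\Pgl[F]{2}}}{\pi_{1}\boxtimes\pi_{2}}$ is symmetric in $\pi_{1},\pi_{2}$, where $G=\PGL{2}$. With these data the conclusion of the criterion is exactly the weak multiplicity-freeness of \cref{almostMainThm}, and its hypothesis becomes the statement $(\ast)$: \emph{every $\PGL[F]{2}\times\PGL[F]{2}$-invariant distribution on $\Pgl[F]{2}\times\Pgl[F]{2}$ is invariant under $\iota\bigl([A],[B]\bigr)=\bigl([B^{t}],[A^{t}]\bigr)$}. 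So everything reduces to proving $(\ast)$.

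\emph{Stratification; the open stratum.} I would stratify $\Pgl{2}\cong\mathbb{P}^{3}$ by the rank of a representing matrix: the open $\PGL{2}\times\PGL{2}$-orbit $X^{\circ}=\{\operatorname{rk}=2\}$, which is the open orbit of the wonderful compactification and is isomorphic to $\PGL{2}$ under the two-sided action, and the closed boundary divisor $D=\{\det=0\}\cong\mathbb{P}^{1}\times\mathbb{P}^{1}$. Then $\Pgl[F]{2}\times\Pgl[F]{2}$ splits into the open stratum $X^{\circ}\times X^{\circ}$, the locally closed stratum $(X^{\circ}\times D)\sqcup(D\times X^{\circ})$, and the closed stratum $D\times D$; the involution $\iota$ preserves the first and the third and interchanges the two halves of the second. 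Only the open stratum carries a continuous family of orbits: with $\Delta\subset G\times G$ the diagonal, $X^{\circ}\times X^{\circ}\cong\bigl((G\times G)/\Delta\bigr)^{2}$ with the diagonal action, so its orbit set is $\Delta\backslash(G\times G)/\Delta$, the set of conjugacy classes of $\PGL{2}$; a $(G\times G)(F)$-invariant distribution on it corresponds, by sending $([A],[B])$ to the conjugacy class of $AB^{-1}$, to a conjugation-invariant distribution on $\PGL[F]{2}$, under which $\iota$ turns into the transpose $c\mapsto c^{t}$ (using that $(c^{-1})^{t}$ is $\PGL{2}$-conjugate to $c$). The classical Gelfand--Kazhdan theorem --- conjugation-invariant distributions on $\GL[F]{n}$, hence on $\PGL[F]{2}$, are transpose-invariant --- therefore settles the open stratum. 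The remaining strata contain only finitely many $G\times G$-orbits: $X^{\circ}\times D$ and $D\times X^{\circ}$ each have two, and $D\times D\cong(\mathbb{P}^{1})^{4}$ has four.

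\emph{The boundary, and the main obstacle.} What remains is to rule out a nonzero $(G\times G)(F)$-invariant, $\iota$-anti-invariant distribution $\eta$ supported on $(X^{\circ}\times D)\cup(D\times X^{\circ})\cup(D\times D)$. Analysing the finitely many boundary orbits by Frobenius reciprocity --- the stabilizers being tori for the ``diagonal-type'' orbits (on which $\iota$ acts trivially), a one-dimensional torus for the open orbit of $X^{\circ}\times D$, and a product $B\times B$ of Borels for the smallest orbit of $D\times D$ --- one finds that among them only the open orbit of $X^{\circ}\times D$, its mirror in $D\times X^{\circ}$, and the open orbit of $D\times D$ carry $(G\times G)$-invariant distributions (the non-reductive stabilizers contribute none, a Borel not being unimodular); the $D\times D$ contribution is $\iota$-invariant, and on $X^{\circ}\times D$ the invariant distribution is the invariant measure $\mu_{0}$ on the open orbit $\cong(G\times G)/H_{0}$ with $H_{0}$ the diagonal image of a maximal torus of $\PGL{2}$. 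Consequently the restriction of $\eta$ to $\Pgl[F]{2}\times\Pgl[F]{2}\setminus(D\times D)$ must be a multiple of $\mu_{0}-\iota_{*}\mu_{0}$, and the crux of the proof is to show this multiple is zero --- equivalently, that neither $\mu_{0}$ nor the combination $\mu_{0}-\iota_{*}\mu_{0}$ extends to a $(G\times G)$-invariant distribution across the deepest stratum $D\times D$. This is the step I expect to be the main obstacle: it amounts to a homogeneity computation for $\mu_{0}$ transverse to $D\times D$ --- the transverse coordinate inside $\mathbb{P}^{3}$ being measured by $|\det|$ --- showing the resulting order of singularity is incompatible with any $(G\times G)$-invariant extension (for this I would expect to invoke the Fourier transform and wave-front-set estimates in the spirit of Aizenbud--Gourevitch, together with a Harish-Chandra-type descent to isolate the relevant transverse slice). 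Once this vanishing is established one has $\operatorname{supp}\eta\subseteq D\times D$, so $\eta$ is $\iota$-invariant and hence $\eta=0$; combined with the classical Gelfand--Kazhdan input on the open stratum this proves $(\ast)$, and therefore \cref{almostMainThm}.
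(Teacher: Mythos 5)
Your reduction is essentially sound, and in fact parallels the paper's: your twisted involution $\iota([A],[B])=([B^{t}],[A^{t}])$ is a legitimate choice of $(\theta,\alpha)$ in the Gelfand--Kazhdan criterion (\cref{GK}) (the paper takes $\theta=\id$, $\alpha=\id$ and proves plain swap-invariance, \cref{C}); your orbit bookkeeping matches \cref{stabilizers}, and your identification of the only boundary orbits carrying invariant distributions (the two five-dimensional orbits with stabilizer $\Delta T$, i.e.\ the paper's $W$, and the open orbit of $D\times D$ with torus stabilizer, the paper's $P$) is correct; your open-stratum argument via descent to conjugation-invariant distributions is also fine. The genuine gap is the step you yourself flag as the ``main obstacle'': showing that the multiple of $\mu_{0}-\iota_{*}\mu_{0}$ must vanish, i.e.\ that this anti-symmetrized invariant measure admits no invariant extension across the deepest stratum. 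You do not prove this; you only express the expectation that a homogeneity/Fourier/wave-front argument transverse to $D\times D$ will do it. That statement is precisely the paper's \cref{D}, and its proof occupies all of \cref{crossMethod}; everything before it is, both in your proposal and in the paper, comparatively routine reduction.

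Moreover, the route you sketch for the crux --- a one-variable order-of-singularity computation in $|\det|$ transverse to $D\times D$ --- is not obviously adequate, and note also that non-extendability of $\mu_{0}$ alone is neither needed nor sufficient: only the combination matters, and the danger is exactly that the extension obstructions of the two branches interchanged by $\iota$ cancel against each other. Near the deep stratum the relevant transverse geometry is two-dimensional, with the torus expanding the transverse direction of one branch while contracting that of the other; this is the cross phenomenon, and it is why the paper, after a wave-front-set argument (\cref{WFcapPi}) justifying restriction of the relevant distributions to the plane $\Pi$, transports the problem to $\{xy=0\}\subseteq\Aff[F]{2}$ with its $\Gmt[F]$-action and invokes the known vanishing \cref{realCross}; even then, the injectivity of that restriction on $\Dist{C}^{\GGt,\chi}$ requires the separate vanishing on $P$ and a Frobenius-descent construction. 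Until you supply an actual proof of the non-extension statement (by reducing to the cross as the paper does, or by a genuinely two-variable homogeneity analysis of your own), your argument establishes only the reduction, not \cref{almostMainThm}.
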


Denote by $X=\Pgl[F]{2}$ and $G=\PGL[F]{2}$. We will use Gelfand-Kazhdan criterion (\cref{GK}) in order to prove \cref{almostMainThm}. To do that it is enough to prove:
\begin{thm}
\label{C}
$\Dist{\XX}^{\GG} \subseteq \Dist{\XX}^{\swap}$ where $\GG$ acts on $\XX$ diagonally.
\end{thm}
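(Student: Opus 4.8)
The plan is to carry out the geometric side of the Gelfand–Kazhdan method: stratify $\XX$ into $\GG$-invariant pieces, and on each piece reduce --- by a localization principle together with Frobenius descent along the $\GG$-orbits --- to a statement about invariant distributions on an explicit vector space, which is then settled by an anti-involution (Gelfand's trick) or by a character comparison. The structural facts driving everything are: $\swap$ \emph{commutes} with the diagonal $\GG$-action on $\XX$, so $\swap$ permutes the $\GG$-orbits and preserves the closure order; and the transpose map $\theta\colon X\to X$, $[A]\mapsto[A^{t}]$, is a well-defined anti-automorphism of $X=\Pgl{2}=\mathbb{P}(\gl{2})$ with $\theta(g\act x\act h)=h^{t}\act\theta(x)\act g^{t}$, which restricts on the open orbit $\PGL{2}\subseteq X$ to the usual transpose and is conjugate there to inversion.

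First I would describe the $\GG$-orbits. Stratifying $X=\mathbb{P}(\gl{2})=\mathbb{P}^{3}$ by the rank of a representing matrix, $X$ itself has two orbits: the open orbit $\cong\PGL{2}$ (rank $2$) and the closed orbit $\cong\mathbb{P}^{1}\times\mathbb{P}^{1}$ (rank $1$, via $[vw^{t}]\leftrightarrow([v],[w])$). Correspondingly $\XX$ splits into four $\GG$-stable, $\swap$-stable locally closed pieces $X_{22},X_{21},X_{12},X_{11}$ according to the pair of ranks. On $X_{22}$ the orbit of $([A],[B])$ is governed by the $\PGL{2}$-conjugacy class of $A^{-1}B$, so $X_{22}$ consists of the closed ``diagonal'' orbit, one nontrivial unipotent orbit, and one-parameter families of split and elliptic regular semisimple orbits; $\swap$ sends the class of $A^{-1}B$ to that of $(A^{-1}B)^{-1}$. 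On $X_{21}$ and $X_{12}$ one is reduced to the diagonal $\PGL{2}$-action on $\mathbb{P}^{1}\times\mathbb{P}^{1}$, giving two orbits each (stabilizer a maximal torus, resp.\ a Borel), and on $X_{11}$ one is reduced to $\PGL{2}\times\PGL{2}$ acting on $(\mathbb{P}^{1}\times\mathbb{P}^{1})^{2}$, giving four orbits. Thus $\XX$ has finitely many orbits outside $X_{22}$, and inside $X_{22}$ a one-parameter family of \emph{closed} regular orbits plus finitely many non-regular ones.

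Next I would invoke the localization principle for invariant distributions to reduce Theorem C to each stratum: it suffices to show that any $\GG$-invariant distribution on $\XX$ meeting a given stratum is $\swap$-invariant there. For an orbit $O=\GG/H$ with $\swap(O)=O$, Frobenius descent reduces this to: every $H$-invariant distribution on the normal space $N$ to $O$ (at a base point) is invariant under the linear automorphism of $N$ induced by $\swap$. For the closed diagonal orbit and the regular semisimple family in $X_{22}$ --- where $N$ is, up to scalars, $\gl{2}$ with an adjoint-type action --- the residual statement becomes exactly: \emph{every $\PGL{2}$-conjugation-invariant distribution on the regular locus of $\PGL{2}$ is invariant under $x\mapsto x^{-1}$}. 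This I would prove by Gelfand's trick: such a distribution is locally a pull-back along the adjoint quotient $\PGL{2}\to\PGL{2}/\!\!/\PGL{2}$, and $x\mapsto x^{-1}$ (like $x\mapsto x^{t}$) descends to the identity there, so the whole family is handled \emph{uniformly}, not orbit-by-orbit; the non-regular orbits (unipotent, central) are then absorbed by Frobenius descent to those orbits, using $\theta$ and the conjugacy of $x$, $x^{-1}$, $x^{t}$ in $\PGL{2}$. The remaining strata $X_{21},X_{12},X_{11}$ contribute finitely many orbits with $H$ a torus or a Borel and $N$ an explicit (generally reducible) $H$-module; for each I would either exhibit an anti-involution of $H$ carrying the $\swap$-action on $N$ to the identity, or compare the modular/determinant characters through which $H$ acts on $N$ versus on $\swap(N)$ to conclude that no nonzero $H$-invariant functional can fail to be $\swap$-invariant.

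The main obstacle will be the boundary interaction --- making the gluing of the orbit-wise conclusions lossless. Concretely the delicate points are: (i) the normal bundle to the rank-$1$ locus of $X$ inside $\mathbb{P}^{3}$ and, for the orbits in $X_{11}$ where $H$ is only a Borel, showing that the resulting non-semisimple $H$-module $N$ admits no $H$-invariant functional that is anti-invariant under $\swap$ (this requires an honest computation of the $H$-action on $N$ and of the relevant characters, not just a dimension count); and (ii) verifying that the uniform treatment of the regular semisimple family on $X_{22}$ is compatible with the stratification at its boundary (the unipotent and central orbits), so that no distribution concentrated there is overlooked when the family is handled wholesale. Everything else reduces to a finite check or to a direct application of $\theta$ together with the fact that a $2\times2$ matrix is $\PGL{2}$-conjugate to its transpose.
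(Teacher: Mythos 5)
Your orbit inventory is essentially correct (it matches the paper's: four orbits in the rank-$(1,1)$ stratum, two each in the mixed-rank strata, and the conjugacy-class picture on the open stratum), and most strata can indeed be killed by orbit-local arguments — the paper itself disposes of the small boundary orbits via the Bernstein--Gelfand--Kazhdan--Zelevinsky criterion (\cref{BGKZ}). But the engine of your proposal — localization to strata plus Frobenius descent, concluded on each orbit by an anti-involution or a comparison of characters of the stabilizer — cannot prove the theorem, because the statement is false orbit-locally at exactly one place. Consider the $\GG$-orbit $W_1$ of $(I,E_{1,1})$ (first coordinate invertible, second of rank one, ratio semisimple) and $W_2=\swap(W_1)$; their union $W$ is a single $\GGt$-orbit whose stabilizer is $\Delta T$, unimodular and contained in $\GG\times\{1\}$. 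Hence each $W_i$ carries a $\GG$-invariant measure $\mu_i$, and (normalizing $\mu_2=\swap_*\mu_1$) the distribution $\mu_1-\mu_2$ on $W$ is nonzero, $\GG$-invariant and $\swap$-anti-invariant. No character comparison on the stabilizer and no anti-involution acting on the orbit or on any slice through it can rule this out: the offending object exists on the orbit. Note also that your reduction is formulated only for $\swap$-stable $\GG$-orbits, while the problem lives precisely on the swapped pairs ($W_1\subseteq X_{21}$, $W_2\subseteq X_{12}$) that your outline leaves untreated; this is the case your point (i) about Borel stabilizers does not cover, since here the stabilizer is a torus and the invariant functional genuinely exists.

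What the theorem requires at this point is a non-orbit-local argument: one must show that $\mu_1-\mu_2$ does not occur as the restriction to $W$ of a $\GG$-invariant, $\swap$-anti-invariant distribution on $C=W\cup P$, where $P=O_{E_{1,2},E_{2,1}}$ (the open orbit of the rank-$(1,1)$ stratum) lies in $\overline{W}$ — an extension obstruction across a boundary orbit, not a vanishing statement on an orbit. This is exactly where the paper departs from the orbit-wise scheme: after reducing to $\Dist{C}^{\GGt,\chi}=0$, it uses the wave-front estimate of \cref{action-WF} to restrict equivariant distributions to the plane $\Pi=\left\{\left(\alpha E_{1,2}+E_{2,1},\,E_{1,2}+\beta E_{2,1}\right)\right\}$, checks via Frobenius descent that this restriction is injective on $\Dist{C}^{\GGt,\chi}$, and lands in $\Dist{\{xy=0\}}^{\Gmt[F],\chi}$, which vanishes by the known result for the cross (\cref{realCross}) — morally the same anomaly-cancellation failure that prevents $d^\times x$ from extending invariantly across $0$. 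Your proposal contains no substitute for this step; by contrast, your worry (ii) about the regular semisimple family in $X_{22}$ is comparatively benign, since there the statement reduces to inversion-invariance of conjugation-invariant distributions on $\PGL[F]{2}$, which is available. So the gap is not bookkeeping at the boundary but the absence of any mechanism that sees how $W$ glues to $P$.
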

\endgroup
\setcounter{thm}{\thetmp}  

or equivalently (\cref{tildeIFF}): 
\[\Dist{\XX}^{\GGt,\chi}=0\]
where $\GGt=\GG\times \ZZ[2]$, $\ZZ[2]$ acts on $\XX$ by $\swap$ and $\chi$ is the character on $\GGt$ which is trivial on $\GG$ but not on $\ZZ[2]$.

We will study the geometry of $\XX$ and try to prove that each $\GGt$-orbit $Y$ in $\XX$ satisfies
\[
\Dist{Y}^{\GGt,\chi} = 0
\]
mainly using Bernstein-Gelfand-Kazhdan-Zelevinsky criterion (\cref{BGKZ}).
This will work for most orbits and will show that is suffices to prove the following:
\[\Dist{C}^{\GGt,\chi}=0\]
where $C$ is a locally closed $\GGt$-equivariant subset of $\XX$ consisting of two orbits $P$ and $W$, s.t. $P$ is also a $\GG$-orbit, $W$ contains two disjoint $\GG$-orbits, and $P$ is contained in the closure of $W$. To prove this we used a variant of the cross method, suggested by Shachar Carmeli: The $\GG$-variety $C$ acts a lot like the $\Gm[F]$-variety $\{xy=0\}$ in $\Aff[F]{2}$, i.e. the cross. We will use the known theorem
\begin{thm}
\label{realCross}
\[\Dist{{\{xy=0\}}}^{\Gm[F]}\subseteq \Dist{{\{xy=0\}}}^{\swap}\]
or equivalently
\[\Dist{{\{xy=0\}}}^{\Gmt[F],\chi}=0\]
where $\Gmt[F]=\Gm[F]\rtimes \ZZ[2]$ and $\chi$ is the character which is trivial on $\Gm[F]$ and non-trivial on $\ZZ[2]$. 
\end{thm}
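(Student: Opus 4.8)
The plan is to prove the second, equivalent, formulation $\Dist{\{xy=0\}}^{\Gmt[F],\chi}=0$; the equivalence with the first is formal, since $\ZZ[2]$ is finite and hence $\Dist{\{xy=0\}}^{\Gm[F]}$ decomposes into the $\pm 1$-eigenspaces of $\swap$, the $+1$-part being $\Dist{\{xy=0\}}^{\swap}$ and the $-1$-part being $\Dist{\{xy=0\}}^{\Gmt[F],\chi}$. Throughout, $\Gm[F]$ acts by $t\act(x,y)=(tx,t^{-1}y)$, so it preserves $\{xy=0\}$ while $\swap$ normalizes it, acting on $\Gm[F]$ by inversion. Write $C=\{xy=0\}(F)=L_{1}\cup L_{2}$ for the union of the two coordinate axes; it is stratified by the two $\Gmt[F]$-orbits $\{0\}$ and $U:=C\setminus\{0\}=L_{1}^{\circ}\sqcup L_{2}^{\circ}$, where $\Gm[F]$ acts simply transitively on each punctured axis $L_{i}^{\circ}\cong F^{\times}$ and $\swap$ interchanges $L_{1}^{\circ}$ with $L_{2}^{\circ}$. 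Fix $\nu\in\Dist{C}^{\Gmt[F],\chi}$; the goal is $\nu=0$.

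First I would compute $\nu|_{U}$. Restricting to each open piece $L_{i}^{\circ}\cong F^{\times}$ and using that the only $F^{\times}$-invariant distributions on $F^{\times}$ are scalar multiples of the multiplicative Haar measure, $\Gm[F]$-invariance forces $\nu|_{L_{1}^{\circ}}=a\,d^{\times}x$ and $\nu|_{L_{2}^{\circ}}=b\,d^{\times}y$ for scalars $a,b$; equivariance under $\swap$ against the sign character then forces $b=-a$. So $\nu|_{U}=a\,\mu_{0}$, where $\mu_{0}:=d^{\times}x|_{L_{1}^{\circ}}-d^{\times}y|_{L_{2}^{\circ}}$; equivalently $\Dist{U}^{\Gmt[F],\chi}$ is at most one-dimensional, spanned by $\mu_{0}$ (this is also what Frobenius reciprocity on the open orbit, whose stabilizer is trivial, gives).

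The crux is to show $a=0$. I would test $\nu$ against $\varphi\in\Sch{C}$, the indicator of the compact open subset $\{p\in C:|p|\le1\}$ (where $|p|$ means $|x|$ on $L_{1}$ and $|y|$ on $L_{2}$), together with its dilate $t\act\varphi$ for a chosen $t\in F^{\times}$ with $|t|=q^{-1}$ ($q$ the residue cardinality). A direct check shows $t\act\varphi-\varphi$ vanishes on a neighborhood of the origin — its $L_{1}$-component is $-\mathbf{1}[|t|<|x|\le1]$ and its $L_{2}$-component is $\mathbf{1}[1<|y|\le|t|^{-1}]$ — and hence lies in $\Sch{U}\subseteq\Sch{C}$. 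Evaluating $\nu$ two ways: $\Gm[F]$-invariance of $\nu$ gives $\nu(t\act\varphi)-\nu(\varphi)=0$, whereas $\nu(t\act\varphi)-\nu(\varphi)=(\nu|_{U})(t\act\varphi-\varphi)=a\,\mu_{0}(t\act\varphi-\varphi)=-2a\cdot\operatorname{vol}^{\times}(\mathcal{O}_{F}^{\times})$, the two coordinate annuli having equal multiplicative volume $\operatorname{vol}^{\times}(\mathcal{O}_{F}^{\times})$ and entering with the same sign. Since $\operatorname{vol}^{\times}(\mathcal{O}_{F}^{\times})\ne0$, we get $a=0$. Then $\nu$ annihilates $\Sch{U}$, so $\nu$ is supported at $\{0\}$ and hence $\nu=c\,\delta_{0}$; but $\delta_{0}$ is fixed by all of $\Gmt[F]$ whereas $\chi(\swap)=-1$, forcing $c=0$, so $\nu=0$.

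The main obstacle is exactly this non-cancellation step, and it is also why the statement is isolated rather than absorbed into the orbit-by-orbit (Bernstein--Gelfand--Kazhdan--Zelevinsky) reduction: the open orbit $U$ on its own supports the nonzero equivariant distribution $\mu_{0}$, and the fixed point is harmless on its own, so the vanishing must come entirely from the incompatibility of $\mu_{0}$ with the origin. Concretely, the content is that the multiplicative volumes of the two families of annuli shrinking toward $0$ scale with opposite $\Gm[F]$-weights, so the obstruction — a multiple of $\delta_{0}$ — cannot be removed by adding distributions supported at $0$, all of which are $\Gm[F]$-invariant. One can also package the last step analytically: the meromorphic family $|x|^{s}d^{\times}x|_{L_{1}}-|y|^{s}d^{\times}y|_{L_{2}}$ is regular at $s=0$ because its residues at the origin cancel, but its value there fails $\Gm[F]$-invariance, the defect being a nonzero multiple of $\log|t|\cdot\delta_{0}$.
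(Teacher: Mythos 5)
Your argument is correct, and it is worth noting that the thesis itself does not prove this statement at all: it is quoted as a known result with a pointer to \cite{AG2}, Proposition 3.3.2, and then used as the final input in the proof of \cref{D}. So what you have supplied is a self-contained elementary replacement for that citation rather than a variant of an argument in the paper. Your structure is the natural one and all the delicate points are handled: the reduction of the two formulations via the $\pm1$-eigenspace decomposition of $\swap$ on $\Dist{\{xy=0\}}^{\Gm[F]}$ is exactly the mechanism of \cref{tildeIFF}; the identification $\nu\big|_{U}=a\bigl(d^{\times}x\big|_{L_{1}^{\circ}}-d^{\times}y\big|_{L_{2}^{\circ}}\bigr)$ uses only uniqueness of Haar measure on $F^{\times}$ (equivalently Frobenius descent on the open orbit, whose stabilizer in $\Gmt[F]$ is trivial); and the key non-formal step — that this candidate does not extend invariantly across the origin — is done by testing against $t\act\varphi-\varphi$ for the indicator $\varphi$ of the "unit ball" in the cross, which you correctly observe lies in $\Sch{U}$, so that the evaluation is legitimate and yields $0=-2a\operatorname{vol}^{\times}\!\left(\mathcal{O}_{F}^{\times}\right)$ (the signs do reinforce, as you check). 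The endgame, $\nu=c\,\delta_{0}$ with $\delta_{0}$ fixed by $\ZZ[2]$ while $\chi(\epsilon)=-1$, is immediate in the $l$-space setting because distributions supported on the closed orbit $\{0\}$ are exactly $\Dist{\{0\}}$ by \cref{exactSeq}, with no transversal-derivative subtlety. Your closing remarks also correctly diagnose why this orbit-wise obstruction cannot be seen by the Bernstein--Gelfand--Kazhdan--Zelevinsky criterion and must be argued globally; the alternative packaging via the family $|x|^{s}d^{\times}x\big|_{L_{1}}-|y|^{s}d^{\times}y\big|_{L_{2}}$ and the $\log|t|\cdot\delta_{0}$ invariance defect is a valid (and standard) reformulation of the same computation. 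In short: correct, complete, and more self-contained than the thesis, which delegates this step to the literature.
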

For a proof see e.g. \cite{AG2}, proposition 3.3.2.
 
We will find an open $\GGt$-equivariant open subvariety $U$ of $\XX$ containing $C$ as a closed subvariety, and a restriction map 
\[ \Dist{U}^{\GGt,\chi} \to \Dist{\Aff[F]{2}}^{\Gmt[F],\chi}\]
that sends $\Dist{C}^{\GGt,\chi}$ injectively into $\Dist{\{xy=0\}}^{\Gmt[F],\chi}$, thus finishing the proof.

\subsection*{Organization of The Thesis}
In \cref{secPerliminaries} we will give preliminary background on a few subjects, including distribution theory on $l$-spaces and on analytic spaces, representation theory of $l$-group and of $\GL{n}$ in particular and the wonderful compactification of semi simple groups of adjoint type.

\cref{weakIFFMulFree} is devoted to proving that \cref{mainThm} is equivalent to \cref{almostMainThm}, using the representation theory of $\GL{n}$.

In \cref{redToCross} we will look at the Gelfand-Kazhdan criterion and will reduce to proving a simpler theorem - \cref{C}. We will study the geometry of $\XX$, identify a subset $C\subseteq U\subseteq X$, where $C$ is close in $U$ and behaves like the cross and $U$ is open in $\XX$. Lastly we will prove that it is enough to prove the null-multiplicity only for $C$ (\cref{D}).

In \cref{crossMethod} we will introduce the variant of the cross method we use. We will find a restriction map from $\GGt,\chi$-invariant generalized functions on $U$ to $\Gmt,\chi$-invariant generalized functions on the plane in which generalized functions on $C$ are mapped to generalized functions on the cross $\{xy=0\}$. We will show this map is injective and handle the difference between generalized functions and distributions.

\subsection*{Acknowledgments}
First of all I would like to thank my advisor Dmitry Gourevitch for guiding me through this project with endless patience, and showing me his approach to mathematics.

I would like to thank my parents and my family for my education and for supporting me in every life decision I made.

I also wish to thank Shachar Carmeli for suggesting the cross method, and for all the useful conversations and suggestions.

I wish to thank Yotam Hendel and Itay Glazer for making my working hours in Weizmann Institute enjoyable and for providing me with coffee. I could not have completed this work without it.

\section{Preliminaries}
\label{secPerliminaries}
Let $F$ be a non-Archimedean local field and $\overline{F}$ its algebraic closure.

\subsection{$l$-Spaces, Analytic Varieties And Distribution}
\begin{dfn}
A topological spaces $X$ is called an $l$-space if it is Hausdorff, locally compact and totally disconnected.

A topological group $G$ is called an $l$-group if it is also an $l$-space as a topological space.
\end{dfn}

\begin{lem}
Let $X$ be an $l$-space and $Y\subseteq X$ a locally-closed subset. Then $Y$ is an $l$-space.
\end{lem}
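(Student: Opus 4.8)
The plan is to reduce the statement to three standard facts about subspaces: the Hausdorff property and total disconnectedness are inherited by \emph{arbitrary} subspaces, while local compactness is inherited by \emph{locally closed} subspaces of a Hausdorff space. By definition of locally closed, write $Y = U \cap C$ with $U$ open and $C$ closed in $X$.

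First I would dispense with the two easy axioms. Any subspace of a Hausdorff space is Hausdorff, since two distinct points of the subspace are separated in $X$ by open sets whose intersections with the subspace still separate them. Likewise, if $A$ is any subset of a totally disconnected space, then every connected subset of $A$ is a connected subset of the ambient space, hence a singleton, so $A$ is totally disconnected. Applying both observations to $A = Y$ shows $Y$ is Hausdorff and totally disconnected.

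The only point that uses the locally closed hypothesis is local compactness. Here I would invoke the standard structure fact that a Hausdorff, locally compact, totally disconnected space has a basis of compact open sets, so in particular every point of $X$ has a neighborhood basis of compact open subsets of $X$. Granting this, fix $y \in Y$ and an open neighborhood $V$ of $y$ in $Y$, and write $V = V' \cap Y$ with $V'$ open in $X$. Since $y \in U$ and $U$ is open, $V' \cap U$ is an open neighborhood of $y$ in $X$, hence contains a compact open set $K$ of $X$ with $y \in K$. Then $K \cap C$ is closed in the compact space $K$, hence compact; and because $K \subseteq U$ and $Y = U \cap C$ we have $K \cap C = K \cap Y$. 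Finally $K \cap Y$ is open in $Y$ (as $K$ is open in $X$) and is contained in $V' \cap Y = V$. Thus $K \cap Y$ is a compact open neighborhood of $y$ in $Y$ contained in $V$, which shows $Y$ is locally compact — indeed that $Y$ itself has a basis of compact open sets, so $Y$ is an $l$-space.

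I do not expect a genuine obstacle: the only subtlety worth noting is that ``locally compact'' must be read in the strong sense (a neighborhood basis of compact sets, not merely one compact neighborhood), but since every space involved is Hausdorff these notions coincide. If one wishes to avoid the structure theorem altogether, one can argue instead that $C$ is a closed subspace of the locally compact space $X$ and hence locally compact, that $Y$ is open in $C$, and that an open subspace of a locally compact Hausdorff space is locally compact (shrinking a compact neighborhood using the regularity of locally compact Hausdorff spaces); this yields the same conclusion.
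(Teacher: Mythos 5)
Your argument is correct. The paper states this lemma without proof, treating it as a standard point-set fact, so there is nothing to compare against; your reasoning --- Hausdorffness and total disconnectedness pass to arbitrary subspaces, and local compactness passes to locally closed subspaces (either via the basis of compact open sets or, as in your closing remark, via ``closed subspace of locally compact is locally compact'' plus ``open subspace of locally compact Hausdorff is locally compact'') --- is exactly the standard argument one would supply.
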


\begin{lem} (See \cite{BZ1} 6.5)
Let $G$ be an $l$-group and $H$ a closed subgroup, then $\sfrac{G}{H}$ with the quotient topology is an $l$-space.
\end{lem}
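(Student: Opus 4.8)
The plan is to verify the three defining properties of an $l$-space for $G/H$ with the quotient topology, using throughout that the projection $\pi\colon G\to G/H$ is continuous, surjective, and \emph{open} — openness because $\pi^{-1}(\pi(V))=VH$ is open whenever $V\subseteq G$ is.

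For Hausdorffness I would note that the relation $R=\{(g,g')\in G\times G : g^{-1}g'\in H\}$ cutting out the quotient is the preimage of the closed set $H$ under the continuous map $(g,g')\mapsto g^{-1}g'$, hence closed; and for an open surjection $\pi$, closedness of $R$ in $G\times G$ is exactly what is needed for the target $G/H$ to be Hausdorff (separate $\pi(g)\ne\pi(g')$ using a box neighborhood of $(g,g')$ avoiding $R$ and push it forward by the open map $\pi$). For local compactness, given $\pi(g)$ choose a compact neighborhood $K\ni g$ in $G$; then $\pi(K)$ is compact and contains the open set $\pi(\operatorname{int}K)\ni\pi(g)$, so $\pi(K)$ is a compact neighborhood of $\pi(g)$.

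Total disconnectedness is the step requiring genuine input: since $G$ is totally disconnected and locally compact, van Dantzig's theorem supplies a neighborhood basis of the identity in $G$ consisting of compact open subgroups $U$. Each image $\pi(U)$ is then compact (continuous image of a compact set) and open ($\pi$ is open), so the $\pi(U)$ form a neighborhood basis of $eH$ by compact open sets, which are clopen by the Hausdorffness just established. Transitivity of the $G$-action on $G/H$ by homeomorphisms transports a clopen neighborhood basis to every point, so $G/H$ has a basis of clopen sets and is therefore totally disconnected. Assembling the three parts yields that $G/H$ is an $l$-space.

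I do not expect a real obstacle here: the only content beyond formal point-set topology of open quotient maps is van Dantzig's structure theorem for totally disconnected locally compact groups — alternatively one can simply invoke the structure theory of $l$-groups already developed in \cite{BZ1}.
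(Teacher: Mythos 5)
The paper offers no proof of this lemma at all --- it is simply quoted from Bernstein--Zelevinsky (their 6.5) as a preliminary --- so there is no argument of the paper to compare against; your proof is correct and is the standard one. Each step checks out: the projection $G\to G/H$ is open since $\pi^{-1}(\pi(V))=VH$; the closed-relation criterion for an open surjection gives Hausdorffness because $\{(g,g'):g^{-1}g'\in H\}$ is closed; compact neighborhoods push forward to compact neighborhoods; and van Dantzig (equivalently, the fact from \cite{BZ1} that an $l$-group has a basis of compact open subgroups at the identity) plus translation gives a clopen basis, which together with Hausdorffness yields total disconnectedness.
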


\begin{dfn}
Let $X$ be an $l$-space. Define the space $\Sm{X}$ of smooth functions on $X$ to be the space of locally constant functions $X\to \bb{C}$. Define the space $\Sch{X}=\Sm[c]{X}$ of Schwartz function on $X$ to be the space of smooth, compactly supported function $X\to \mathbb{C}$. We equip these spaces with the discrete topology. \\
We may look at the dual space $\Dist{X}$ - the space of distributions on $X$. We equip this space with the weak topology - the topology generated by 
\[U_{\eps,f}:=\left\{\xi\in\Dist{X}\,|\, \left|\langle \xi, f\rangle\right|< \eps\right\}\]
where $\eps>0$ and $f\in \Sch{X}$.
\end{dfn}

\begin{lem}
\label{exactSeq}
Let $X$ be an $l$-space, $U\subseteq X$ open and $Z=X\setminus U$. We have maps $\Sch{X}\to\Sch{Z}$ given by restriction, and $\Sch{U}\to \Sch{X}$ given by continuation by zeros. The sequence
\[0\to\Sch{U}\to\Sch{X}\to\Sch{Z}\to 0\]
is exact.
\end{lem}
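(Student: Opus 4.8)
The plan is to check exactness at each of the three spots, with essentially all the content sitting in the surjectivity of the restriction map $\Sch{X}\to\Sch{Z}$.

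First I would deal with the extension-by-zero map $\Sch{U}\to\Sch{X}$ and exactness at $\Sch{U}$ and at $\Sch{X}$. This map is obviously injective, so it remains to see it is well defined and to identify its image. If $f\in\Sch{U}$ then $\supp f$ is compact in $U$, hence compact, hence closed in $X$; on the open set $X\setminus\supp f\supseteq Z$ the extension is identically $0$, so the extended function is locally constant on all of $X$, compactly supported, and vanishes on $Z$, i.e. it lies in the kernel of $\Sch{X}\to\Sch{Z}$. Conversely, suppose $f\in\Sch{X}$ with $f|_Z=0$. Since $f$ is locally constant and vanishes at every point of $Z$, each point of $Z$ has a neighborhood on which $f\equiv 0$, so $\supp f=\overline{\{f\ne 0\}}$ avoids $Z$ and is therefore contained in $U$. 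As a closed subset of $X$ contained in $U$, it is also the closure of $\{f\ne 0\}$ inside $U$, and it is compact; hence $f|_U\in\Sch{U}$ and $f$ is its extension by zero. This gives exactness at $\Sch{U}$ and at $\Sch{X}$.

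The main point is surjectivity of $\Sch{X}\to\Sch{Z}$. Here I would use that $Z$ is an $l$-space and that in an $l$-space the compact open sets form a basis for the topology, so that the traces on $Z$ of compact open subsets of $X$ form a basis for the topology of $Z$. Let $g\in\Sch{Z}$ with compact support $K$. For each $x\in K$ pick a compact open $V_x\subseteq X$ with $g\equiv g(x)$ on $Z\cap V_x$, and extract a finite subcover $V_{x_1},\dots,V_{x_n}$ of $K$. Disjointify by setting $W_i=V_{x_i}\setminus\bigcup_{j<i}V_{x_j}$: these are compact open in $X$ (finite boolean combinations of compact opens are compact open), pairwise disjoint, and still cover $K$. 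Put $\tilde g=\sum_{i=1}^{n} g(x_i)\,\mathbf{1}_{W_i}$, a finite linear combination of indicators of compact open sets, hence an element of $\Sch{X}$. On $Z\cap W_i$ we have $\tilde g=g(x_i)=g$ (since $W_i\subseteq V_{x_i}$), and off $\bigcup_i W_i$ both $\tilde g$ and $g|_Z$ vanish (that complement misses $K$), so $\tilde g|_Z=g$, completing the proof.

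I expect the disjointification step to be the only place needing genuine care: one must verify that the $W_i$ are compact open, that they still cover $K$, and that $\tilde g$ restricts to $g$ on $Z$ — the remaining arguments are routine manipulations of closures in $X$ versus in $U$ and $Z$.
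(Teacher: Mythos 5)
Your proof is correct. The paper states this lemma without proof (it is a standard fact going back to Bernstein--Zelevinsky), and your argument — identifying the kernel of restriction with extensions by zero via local constancy, and proving surjectivity onto $\Sch{Z}$ by covering the support with compact open subsets of $X$ on whose traces $g$ is constant, disjointifying, and summing indicator functions — is exactly the standard proof, carried out correctly.
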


\begin{dfn}
An $F$-analytic manifold of dimension $n$ is a ringed space $\left(M,O\right)$ which is locally isomorphic to $\left(\cal{O}_{F}^{n}, \An\right)$ where
\[
\An\left(U\right) = 
\left\{
	f:U \to F\
	\,\bigg|\,
	\forall x\in U, \exists r>0 \text{ s.t. }
		f\big|_{B_r\left(x\right)} \left(y\right) = 
		\sum_{
			\overrightarrow{\al}\in \bb{Z}_{\ge 0}^n
		} a_{\overrightarrow{\al}}
		\left(x-y\right)^{\overrightarrow{\al}}
\right\}
\]
\end{dfn}

\begin{lem}
Let $M$ be an $F$-analytic manifold. Then $M$ is an $l$-space.
\end{lem}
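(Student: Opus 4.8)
The plan is to reduce the statement to the model space $\mathcal{O}_F^n$ and then glue. First I would recall the basic structure of the ring of integers of a non-Archimedean local field: $\mathcal{O}_F$ is compact — for instance because it is the closed unit ball $\{x\in F:\lvert x\rvert\le 1\}$ in the locally compact field $F$, or because $\mathcal{O}_F\cong\varprojlim_k \mathcal{O}_F/\mathfrak{m}^k$ is profinite — and it is metrizable via the absolute value, hence Hausdorff. The ultrametric inequality shows that every ball $a+\mathfrak{m}^k$ (with $a\in\mathcal{O}_F$, $k\ge 0$) is simultaneously open and closed, and these balls form a basis of the topology. Consequently $\mathcal{O}_F^n$ with the product topology is compact, Hausdorff, and admits a basis of compact-open (in particular clopen) sets, namely the products $\prod_{i=1}^n\bigl(a_i+\mathfrak{m}^{k_i}\bigr)$; in particular $\mathcal{O}_F^n$ is an $l$-space, and so is every open subset of it, by the earlier lemma that locally closed subsets of $l$-spaces are $l$-spaces.

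Next I would invoke the definition of $M$: every point of $M$ has an open neighborhood $U$ which, as a ringed space, is isomorphic to $(\mathcal{O}_F^n,\An)$ (or to an open sub-ringed-space of it), hence in particular homeomorphic to an open subset of $\mathcal{O}_F^n$. Thus every point of $M$ has a neighborhood that is an $l$-space carrying a basis of compact-open subsets. Local compactness is a local property, so $M$ is locally compact. For total disconnectedness, observe that in such a chart $U$ the basic compact-open subsets $B\subseteq U$ are open in $M$ (being open in the open set $U$) and, using that $M$ is Hausdorff, also closed in $M$ (being compact subsets of a Hausdorff space); hence $M$ has a basis of clopen sets. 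A Hausdorff space with a basis of clopen sets is totally disconnected: if $x\neq y$, Hausdorffness together with the clopen basis yields a clopen set $B$ with $x\in B$ and $y\notin B$, so the connected component of $x$ is contained in every clopen neighborhood of $x$, and the intersection of all of these is $\{x\}$; hence that component is $\{x\}$.

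It remains to address Hausdorffness of $M$ itself. This cannot be deduced from the local model alone — the real analogue, the line with a doubled origin, is locally Euclidean but not Hausdorff — so I would simply invoke the standard convention that a manifold is required to be Hausdorff (and often also second countable or paracompact); with that convention in place, the two local properties verified above complete the proof. The only genuinely nontrivial point is the gluing step for total disconnectedness, where Hausdorffness of $M$ is used to promote ``open in a chart'' to ``clopen in $M$''; everything else is either standard structure theory of $\mathcal{O}_F$ or a purely local check.
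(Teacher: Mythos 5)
Your argument is correct, and it is worth noting that the paper itself states this lemma without proof, treating it as standard background; so there is no authorial proof to diverge from. Your reduction to the model space — $\mathcal{O}_F^n$ is compact Hausdorff with a basis of clopen balls $\prod_i\bigl(a_i+\mathfrak{m}^{k_i}\bigr)$, hence local compactness and a clopen basis transfer to any chart, and a Hausdorff space with a clopen basis is totally disconnected — is exactly the expected argument, and your gluing step (compact basic sets in a chart are open in $M$ and, by Hausdorffness, closed in $M$) is carried out correctly. The one genuine subtlety is the one you flag: the paper's definition of an $F$-analytic manifold only says ``ringed space locally isomorphic to $(\mathcal{O}_F^n,\An)$'' and does not state Hausdorffness, and without that convention the lemma is literally false — the non-Archimedean analogue of the line with doubled origin (two copies of $\mathcal{O}_F$ glued along $\mathcal{O}_F\setminus\{0\}$) is locally isomorphic to the model but not Hausdorff, hence not an $l$-space. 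So your appeal to the standard convention is the right resolution, and it is in fact the implicit reading the paper relies on; with that in place your proof is complete.
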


\begin{lem}
Let $X$ be a smooth algebraic variety defined over $F$. Then $X\left(F\right)$ has a natural $F$-analytic structure.
\end{lem}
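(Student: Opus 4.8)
\emph{Proof proposal.} The plan is to build the $F$-analytic structure by hand from an affine open cover together with the Jacobian criterion for smoothness, producing analytic charts via the non-Archimedean inverse (implicit) function theorem, and then to check that the resulting atlas is canonical. First I would reduce to the affine case: fix a cover $X=\bigcup_i U_i$ by affine open subvarieties. It suffices to equip each $U_i(F)$ with an $F$-analytic structure in such a way that the structures on $U_i(F)$ and $U_j(F)$ restrict to the same structure on $(U_i\cap U_j)(F)$; the topology and structure sheaf on $X(F)$ are then obtained by gluing (declare $W\subseteq X(F)$ open iff each $W\cap U_i(F)$ is open), and $X(F)$ is Hausdorff because $X$ is separated — the diagonal being Zariski-closed, its $F$-points form a closed subset of $(X\times X)(F)=X(F)\times X(F)$.

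So let $U\subseteq\mathbb{A}^N$ be affine, smooth of dimension $n$, cut out by $g_1,\dots,g_m\in F[t_1,\dots,t_N]$, and fix $x\in U(F)$. By the Jacobian criterion the matrix $(\partial g_j/\partial t_i)(x)$ has rank $N-n$, so after reindexing we may assume the submatrix $(\partial g_j/\partial t_i)(x)$ with $1\le j\le N-n$, $n+1\le i\le N$ is invertible. The non-Archimedean implicit function theorem then yields a ball $B\subseteq F^n$ around $(x_1,\dots,x_n)$ and analytic maps $\varphi_{n+1},\dots,\varphi_N\colon B\to F$ such that, in a small polydisc neighbourhood of $x$ in $F^N$, the common zero locus of $g_1,\dots,g_{N-n}$ — and hence, by a dimension count (these $N-n$ equations already cut out a smooth $n$-fold through $x$ which is contained in $U$, so it agrees with $U$ Zariski-locally near $x$), the set $U(F)$ itself — is exactly the graph $\{(s,\varphi_{n+1}(s),\dots,\varphi_N(s)):s\in B\}$. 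Projection to the first $n$ coordinates is a homeomorphism of this neighbourhood onto $B$, and I declare its inverse to be an $F$-analytic chart around $x$.

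Next I would check compatibility. Given two such charts around a common point (arising from possibly different embeddings, choices of equations, and distinguished coordinates), the transition map is, componentwise, a coordinate function of a graph parametrization of the above form, hence a composition of coordinate projections with the analytic functions $\varphi_\bullet$; since compositions of $F$-analytic maps are $F$-analytic, the charts are mutually compatible. This simultaneously shows the atlas is independent of all choices, which is the content of the word ``natural''; one may phrase it cleanly by noting that the resulting structure sheaf is intrinsically $\mathcal{O}^{\mathrm{an}}(W)=\{f\colon W\to F$ that is locally analytic in one, equivalently every, such chart$\}$, and that regular functions on $U$ restrict into it. Finally I would glue: for each pair $i,j$ the inclusions $U_i\cap U_j\hookrightarrow U_i$ and $U_i\cap U_j\hookrightarrow U_j$ are open immersions of smooth affine varieties, so running the construction inside $U_i$ or inside $U_j$ produces compatible atlases on $(U_i\cap U_j)(F)$ by the same argument; the local structures therefore glue to a global $F$-analytic manifold structure on $X(F)$, functorial in $X$.

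The only genuinely non-formal ingredient — hence the main obstacle — is the non-Archimedean implicit function theorem (equivalently, the inverse function theorem for convergent power series over $F$, or Hensel's lemma in families), together with the observation that near a smooth point the variety is locally the graph cut out by exactly $N-n$ of its defining equations. Everything downstream — analyticity of composites, independence of the choices, gluing, and the verification that the chart topology is the $l$-space topology — is routine.
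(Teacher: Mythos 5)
The paper states this lemma as a standard preliminary fact and gives no proof of its own, so there is nothing to compare against; your argument is the classical one (Jacobian criterion plus the non-Archimedean implicit/inverse function theorem, as in Serre's treatment of analytic manifolds over complete fields), and it is essentially correct: the charts onto balls in $F^n$ are isomorphic, after scaling, to the model $\left(\mathcal{O}_F^n,\operatorname{An}\right)$ used in the paper's definition, and your gluing and compatibility checks are the right ones. One small slip to fix: the common zero locus $Z$ of the selected $N-n$ equations \emph{contains} $U$, not the other way around, since you dropped equations. The local identification near $x$ then follows because $Z$ is smooth, hence locally irreducible, of dimension $n$ at $x$ (by the invertible minor), while $U\subseteq Z$ is closed of the same dimension through $x$, so $U=Z$ in a Zariski neighbourhood of $x$; with that corrected, the graph description of $U(F)$ in a small polydisc, and everything downstream, goes through as you wrote it.
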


\begin{dfn}
Let $M$ be an $F$-analytic manifold, and $p:E\to M$  a complex or real vector bundle over $M$. We define the space $\Sm{M,E}$ of smooth sections to be the space of locally constant sections of $E$. We define the space $\Sch{M,E}$ of Schwartz sections to be the space of smooth, compactly supported sections of $E$. We equip these spaces with the discrete topology. \\
We may look at the dual space $\Dist{M,E}$ - the space of distributional $E$-section on $M$. We equip this space with the weak topology - the topology generated by 
\[U_{\eps,f}:=\left\{\xi\in\Dist{M,E}\,|\, \left|\langle \xi, f\rangle\right|< \eps\right\}\]
where $\eps>0$ and $f\in \Sch{M,E}$.

We will be interested in a few bundles over $M$: the constant bundle $\bb{C}_M$, the bundle $\det\left(M\right):=\Lambda^{\text{top}} \left(T^* M\right )$ of $F$-valued top differential forms, and the density bundle $\Dens{M}:=\left|\det\left(M\right)\right|$. Note that $\Sch{M,\bb{C}_M} = \Sch{M}$, $\Dist{M,\bb{C}_{M}}=\Dist{M}$. The space of smooth, compactly supported measures on $M$ is $\SmM[c]{M}:=\Sch{M,\Dens{M}}$
We define the space of generalized $E$-sections as \[\genfun{M,E}:=\Dist{M,E^{*}\otimes \Dens{M}}\]
The space of generalized functions on $M$ is \[\genfun{M}:=\genfun{M,\bb{C}_{M}}=\SmM[c]{M}^{*}\]
\end{dfn}

Note that $\Sm{M,E}$ embeds naturally into $\genfun{M,E}$

\begin{thm}
$\Sm{M,E}$ is dense in $\genfun{M,E}$ (w.r.t. the weak topology).
\end{thm}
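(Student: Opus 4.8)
The plan is to pass to the duality that defines $\genfun{M,E}$ and show that the annihilator of $\Sm{M,E}$ is trivial. By definition $\genfun{M,E}=\Dist{M,E^{*}\otimes\Dens{M}}$ carries the weak topology generated by the pairings with elements of $\Sch{M,E^{*}\otimes\Dens{M}}$, and the natural embedding $\Sm{M,E}\hookrightarrow\genfun{M,E}$ noted above sends a smooth section $s$ to the functional $\mu\mapsto\int_{M}\langle s,\mu\rangle$, where $\langle s,\mu\rangle\in\SmM[c]{M}$ is the compactly supported density obtained by contracting the $E$-part of $s$ against the $E^{*}$-part of $\mu$. The continuous linear functionals on a space equipped with such a weak topology are exactly the evaluations $\xi\mapsto\langle\xi,f\rangle$ for $f\in\Sch{M,E^{*}\otimes\Dens{M}}$; hence, by the Hahn--Banach separation theorem, $\Sm{M,E}$ is weakly dense in $\genfun{M,E}$ if and only if the following holds: whenever $\mu\in\Sch{M,E^{*}\otimes\Dens{M}}$ satisfies $\int_{M}\langle s,\mu\rangle=0$ for every $s\in\Sm{M,E}$, then $\mu=0$. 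So the whole statement reduces to this last assertion.

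To prove it I would fix such a $\mu$, assume $\mu\neq0$, and derive a contradiction. Since $\mu$ is locally constant, the set where $\mu\neq0$ is clopen, so $\supp[\mu]$ is a nonempty compact open subset of $M$. Because $M$ is an $F$-analytic manifold, and in particular an $l$-space, every open cover of a compact open set admits a finite refinement into pairwise disjoint compact opens; using this and shrinking as needed, I would partition $\supp[\mu]$ into finitely many pairwise disjoint compact opens $U_{1},\dots,U_{k}$ over each of which $E$, $E^{*}$ and $\Dens{M}$ are trivialized by locally constant frames and the coordinates of $\mu$ are constant. On each $U_{j}$ the section $\mu$ is then a nonzero constant vector in these coordinates; fix one $U_{j}$ together with a coordinate of $\mu$ equal to a nonzero constant $c\in\bb{C}$, let $e$ be the corresponding vector of the frame of $E|_{U_{j}}$ dual to the chosen frame of $E^{*}|_{U_{j}}$, and let $\nu$ be the chosen non-vanishing density on $U_{j}$.

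To conclude, I would take $s\in\Sm{M,E}$ equal to $e$ on $U_{j}$ and to $0$ on $M\setminus U_{j}$; this section is locally constant and compactly supported, hence lies in $\Sm{M,E}$. Contracting $E$ against $E^{*}$, the density $\langle s,\mu\rangle$ equals $c\,\nu$ on $U_{j}$ and vanishes off $U_{j}$, so $\int_{M}\langle s,\mu\rangle=c\cdot\operatorname{vol}_{\nu}(U_{j})$. A non-vanishing density assigns strictly positive volume to the nonempty open set $U_{j}$, so this integral is nonzero, contradicting the assumption that $\mu$ annihilates $\Sm{M,E}$. Therefore $\mu=0$, and the density of $\Sm{M,E}$ in $\genfun{M,E}$ follows.

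The step requiring the most care is the bookkeeping of the local trivializations: one must choose the frames of $E$, $E^{*}$ and $\Dens{M}$ compatibly — so that the chosen frame vector of $E$ pairs with the chosen frame of $E^{*}$ to give the constant function $1$ — and refine the cover until all coordinates of $\mu$ are genuinely constant on each piece. In the non-Archimedean, totally disconnected setting this refinement step is painless, so beyond this bookkeeping the argument is just the elementary fact that a non-zero smooth compactly supported measure cannot integrate to zero against every locally constant function, combined with the standard identification of the continuous dual of a weak topology.
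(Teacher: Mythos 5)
Your argument is correct. The paper states this theorem as a standard preliminary fact without giving a proof, so there is nothing to compare it against line by line; what you wrote is essentially the standard argument, and it goes through. The reduction via duality is legitimate here precisely because $\Sch{M,E^{*}\otimes\Dens{M}}$ carries the discrete topology, so $\genfun{M,E}=\Dist{M,E^{*}\otimes\Dens{M}}$ is the full algebraic dual and the pairing is separating in both variables; hence the continuous dual of the weak topology is exactly the Schwartz space, Hahn--Banach applies to this locally convex topology, and density of $\Sm{M,E}$ is equivalent to the vanishing of its pre-annihilator in $\Sch{M,E^{*}\otimes\Dens{M}}$, as you say. Your verification of that non-degeneracy is also fine: since $M$ is totally disconnected, a nonzero locally constant compactly supported $\mu$ has clopen compact support, which you may partition into finitely many disjoint compact opens trivializing $E$, $E^{*}$ and $\Dens{M}$ with $\mu$ constant in coordinates, and pairing against the indicator-type section $s=e$ on one piece gives $\int_{M}\langle s,\mu\rangle=c\cdot\nu(U_{j})$. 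The only point worth phrasing carefully is the last one: a trivializing section of $\Dens{M}$ over $U_{j}$ is a nonzero (possibly complex) multiple of $\left|\omega\right|$ for a non-vanishing top form $\omega$, so $\nu(U_{j})$ is a nonzero scalar times the strictly positive volume $\int_{U_{j}}\left|\omega\right|$; either way the integral is nonzero, which is all you need.
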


\subsection{Wave Front Sets and Push Forward of Generlized Functions}
We wish to know when can we push forward generlized functions, and specifically when can a generalized function $\xi$ on a variety $X$ be restricted to a subvariety $Y\subseteq X$ . Harish-Chandra showed the existence of a push forward in the case of a submersion (see \cite{HC}, \cite{Dima1}). H\"ormander gave sufficient conditions for the existence of push forward for the real case  in \cite{Hor} (theorem 8.2.4) using his notion of wave front sets - the set of directions on which the distribution is not smooth.
This notion of wave-front sets and the criterion was extended to the non-Archimedean case by Heifetz in \cite{wf} (Theorem 2.8). See also \cite{Aiz1}, \cite{Aiz2} for the notion of wave front sets in the non-Archimidean case.

\begin{thm}[Harish-Chandra's submersion principle]
\label{HCS}
Let $p:M\to N$ be a submersion of $F$-analytic manifolds. Let $E$ be a vector bundle over $N$. Then there exists a surjective continuous linear map 
\[p_{*}:\Sch{M, p^{*}E\otimes \Dens{M}}\to \Sch{N,E\otimes \Dens{N}}\]
s.t. for any $f\in \Sch{N,E^{*}}$ and $\mu\in\Sch{M,p^{*}E\otimes \Dens{M}}$ we have
\[
\int_{N} \left\langle f, p_{*}\mu \right\rangle
= \int_{M} \left\langle f\circ p, \mu \right\rangle
\]
Moreover $p_{*}\mu$ is unique in $\Sch{N, E\otimes\Dens{N}}$ w.r.t this property.
\end{thm}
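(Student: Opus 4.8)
The plan is to construct $p_*$ first in a local model and then glue the pieces, using the integral identity both to drive the local construction and to certify independence of all choices; continuity will come for free since all the Schwartz spaces carry the discrete topology.

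First I would reduce to a local model. By the submersion (implicit function) theorem for $F$-analytic manifolds, every point of $M$ has an open neighborhood $V$ with an analytic isomorphism $V\cong p(V)\times\mathcal{O}_F^{k}$ under which $p|_V$ becomes the projection $\pr_1$; shrinking further we trivialize $E$ over $p(V)$, and use the canonical splitting $\Dens{M}\cong p^{*}\Dens{N}\otimes\Dens{M/N}$ coming from $0\to\ker dp\to TM\to p^{*}TN\to 0$, trivializing the relative density bundle $\Dens{M/N}$ over $V$. In these coordinates a section $\mu$ of $p^{*}E\otimes\Dens{M}$ supported in $V$ is a vector-valued datum $\phi(n,y)\,\omega_N(n)\,\omega_{M/N}(y)$ with $\phi\in\Sch{p(V)\times\mathcal{O}_F^{k}}$ valued in the fiber of $E$, and I set $(p_*\mu)(n):=\bigl(\textstyle\sum_{y}\phi(n,y)\bigr)\,\omega_N(n)$, where the ``sum'' is the integral of a locally constant, compactly supported function against the fixed fiber density — an honest \emph{finite} sum, since a compact open subset of $\mathcal{O}_F^{k}$ is a finite disjoint union of cosets on which $\phi(n,\cdot)$ is constant.

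Next I would check that this local pushforward lands in $\Sch{N,E\otimes\Dens{N}}$ and satisfies the identity. Compact support is immediate from $\supp[p_*\mu]\subseteq p(\supp[\mu])$ together with the fact that continuous images of compacts are compact. Local constancy is the one genuinely technical point: fixing $n_0$, the support of $\mu$ meets the fiber $\{n_0\}\times\mathcal{O}_F^{k}$ in a compact set; since $\phi$ is locally constant, a compactness argument produces a neighborhood of $n_0$ on which $\phi(\cdot,y)$ is independent of the base variable for all relevant $y$, so $n\mapsto\sum_y\phi(n,y)$ is locally constant near $n_0$. The identity $\int_N\langle f,p_*\mu\rangle=\int_M\langle f\circ p,\mu\rangle$ is then just Fubini for finite sums, i.e.\ the definition of the product density, checked in coordinates. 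For the global statement I would first prove uniqueness: if $\xi\in\Sch{N,E\otimes\Dens{N}}$ pairs to zero with every $f\in\Sch{N,E^{*}}$, then testing against indicators of small compact opens and a local frame of $E$ forces $\xi=0$; hence the integral identity pins down $p_*\mu$. Then, given a compactly supported $\mu$, choose a finite partition of unity subordinate to good charts, write $\mu=\sum_{i=1}^{r}\mu_i$ with $\mu_i$ supported in $V_i$, and declare $p_*\mu:=\sum_i p_*\mu_i$; the integral identity (and uniqueness) show this is independent of the partition and charts. Surjectivity: for $\nu=g\,\omega_N\in\Sch{N,E\otimes\Dens{N}}$, cover $\supp[\nu]$ by good charts with a subordinate partition of unity, and in each chart lift $g\,\omega_N$ to $g(n)\,b(y)\,\omega_N(n)\,\omega_{M/N}(y)$ with $b\in\Sch{\mathcal{O}_F^{k}}$ of fiber-integral $1$; the sum of these lifts maps to $\nu$.

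I expect the main obstacle to be exactly the local-constancy verification for the fiber integral, together with making the gluing watertight: one must genuinely use compactness of the fiber over a point to upgrade the pointwise local constancy of $\phi$ to local constancy of $n\mapsto\sum_y\phi(n,y)$, and then argue coordinate-independence through the integral characterization rather than by a direct analytic change-of-variables computation, which would be more cumbersome in the non-Archimedean analytic setting.
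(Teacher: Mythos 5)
Your proposal is correct, but it is worth noting that the paper does not prove this statement at all: Theorem \ref{HCS} is quoted as a known result of Harish-Chandra (the citations \cite{HC}, \cite{Dima1} accompanying it), so there is no internal proof to compare against. Your argument is a legitimate self-contained proof in the non-Archimedean setting, and it correctly exploits the features that make this case much easier than the Archimedean one: Schwartz sections are locally constant and compactly supported, so the fiber integral in the local model $V\cong p(V)\times\mathcal{O}_F^{k}$ is a finite sum (indeed $\Sch{X\times Y}\cong\Sch{X}\otimes\Sch{Y}$ for $l$-spaces, which gives local constancy of the pushforward even more directly than your compactness argument); continuity is automatic because the paper puts the discrete topology on Schwartz spaces; the splitting $\Dens{M}\cong p^{*}\Dens{N}\otimes\Dens{M/N}$ and the integral identity as a uniqueness device to certify independence of charts and partitions are exactly the right mechanism. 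The only point to flag is the surjectivity claim: as stated (and as you argue it, by covering $\supp[\nu]$ with charts of the form $p(V)$ and lifting with a fiber bump of integral $1$), it requires $p$ to be surjective, since any $p_{*}\mu$ is supported in the open set $p(M)$; this hypothesis is implicit in the theorem as the paper uses it (the maps $a_{P}$, $a_{W}$ are onto the orbits $P$, $W$), but you should state it explicitly in your proof. With that caveat, your route — local model, fiber summation, uniqueness via the pairing, gluing — is the standard proof and buys the thesis a self-contained argument where the paper simply defers to the literature.
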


\begin{dfn}
Let $V$ be a finite dimensional $F$-vector space. Define the Fourier transform of distributions on $V$, 
\[\fourier: \Dist{V}\to\genfun{V^{\vee}}\]
as the dual operator of the Fourier transform \[\fourier: \SmM[c]{V^{\vee}} \to \Sch{V}\]
defined by
\[\fourier\mu\left(v\right) = \int_{V^\vee} \chi\left(v\right) \,d\mu \left(\chi\right)\]
where $V^{\vee}$ is the Pontryagin dual of $V$. By choosing $\mu_0\in F^\vee$ we can identify $V^*$ with $V^\vee$ using the isomorphism $\varphi\mapsto \mu_0\circ \varphi$. And by choosing a basis for $V$, we will identify $V^\vee$ with $V$.
\end{dfn}

\begin{claim}
Let $\xi\in\Dist{V}$ be a distribution with compact support. Then $\fourier[\xi]\in\Sm{V}$.
\end{claim}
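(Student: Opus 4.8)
The plan is to reduce the statement to the orthogonality relations for characters of profinite abelian groups, the non-Archimedean analogue of the Paley--Wiener argument. Since $\supp[\xi]$ is compact and $V$ is the increasing union of the lattices obtained by dilating a fixed lattice, there is a lattice (compact open subgroup) $L\subseteq V$ with $\supp[\xi]\subseteq L$. Hence $\xi$ annihilates every $f\in\Sch{V}$ supported off $L$, so $\langle\xi,g\rangle:=\langle\xi,\mathbf 1_L\cdot g\rangle$ extends $\xi$ to a linear functional on all of $\Sm{V}$ agreeing with the original $\xi$ on $\Sch{V}$. I would then define the candidate smooth representative $h\colon V^{\vee}\to\bb C$ by $h(\psi):=\langle\xi,\mathbf 1_L\cdot\psi\rangle$, where $\psi$ on the right denotes the (smooth, non-compactly supported) function $v\mapsto\psi(v)$. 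Because $\mathbf 1_L\cdot\psi$ depends only on $\psi|_L$, the function $h$ is invariant under translation by the annihilator $L^{\perp}=\{\psi\in V^{\vee}:\psi|_L\equiv 1\}$; since $L^{\perp}$ is again a compact open subgroup of $V^{\vee}$, this already shows $h\in\Sm{V^{\vee}}$.

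It then remains to identify $\fourier[\xi]\in\genfun{V^{\vee}}$ with $h$, i.e.\ to verify $\langle\fourier[\xi],\mu\rangle=\int_{V^{\vee}}h\,d\mu$ for every $\mu\in\SmM[c]{V^{\vee}}$. Writing $\mu=\sum_i a_i\nu_i$ with $a_i\in\bb C$ and $\nu_i$ the restriction of a fixed Haar measure to a coset $\psi_iK'$ of a sufficiently small lattice $K'\subseteq L^{\perp}$ (disjoint cosets), I would use $\langle\fourier[\xi],\mu\rangle=\langle\xi,\fourier[\mu]\rangle$ and compute $\fourier[\mu]$ directly from its definition: orthogonality of characters on the compact group $K'$ gives $\int_{\psi_iK'}\psi(v)\,d\psi=\operatorname{vol}(K')\,\psi_i(v)\,\mathbf 1_{(K')^{\perp}}(v)$, so $\fourier[\mu]$ is a finite $\bb C$-linear combination of the functions $\psi_i\cdot\mathbf 1_{(K')^{\perp}}$, each lying in $\Sch{V}$. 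Since $K'\subseteq L^{\perp}$ forces $L\subseteq(K')^{\perp}$ by inclusion-reversal and biduality of annihilators, $\psi_i\cdot\mathbf 1_{(K')^{\perp}}$ agrees with $\mathbf 1_L\cdot\psi_i$ on $L\supseteq\supp[\xi]$; applying $\xi$ and using that $h$ is constant equal to $h(\psi_i)$ on $\psi_iK'$ yields $\langle\xi,\fourier[\mu]\rangle=\sum_i a_i\operatorname{vol}(K')\,h(\psi_i)=\int_{V^{\vee}}h\,d\mu$. Under the identification $V^{\vee}\cong V$ this gives $\fourier[\xi]\in\Sm{V}$.

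The only point requiring real care is the simultaneous bookkeeping of the two lattices: one must choose $K'$ inside $L^{\perp}$ so that $h$ is $K'$-periodic and, at the same time, $(K')^{\perp}\supseteq L\supseteq\supp[\xi]$ so that the discrepancy $\psi_i\cdot\mathbf 1_{(K')^{\perp}}-\mathbf 1_L\cdot\psi_i$ is killed by $\xi$; both are automatic from the fact that $K\mapsto K^{\perp}$ reverses inclusions and is an involution on lattices of $V$ and $V^{\vee}$ under the fixed self-duality. Everything else is the routine decomposition of a smooth compactly supported measure into finitely many Haar pieces together with character orthogonality on a profinite abelian group, so I do not anticipate a serious obstacle beyond getting this interchange of $\xi$ with the integral defining $\fourier[\mu]$ cleanly justified.
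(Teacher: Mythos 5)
Your argument is correct; note that the paper states this claim without proof, since it is the standard non-Archimedean Paley--Wiener-type fact, so there is no in-paper argument to compare against. Your route --- enclosing $\supp[\xi]$ in a lattice $L$, setting $h\left(\psi\right)=\left\langle\xi,\mathbf{1}_{L}\cdot\psi\right\rangle$ and noting its $L^{\perp}$-invariance, then verifying $\left\langle\fourier[\xi],\mu\right\rangle=\int_{V^{\vee}}h\,d\mu$ by decomposing $\mu$ into Haar pieces on cosets of a lattice $K'\subseteq L^{\perp}$, using character orthogonality on $K'$ and the biduality $\left(K'\right)^{\perp}\supseteq L\supseteq\supp[\xi]$ --- is exactly the standard argument, and each step (including the extension of $\xi$ to $\Sm{V}$ via $\mathbf{1}_{L}$ and the constancy of $h$ on $\psi_{i}K'$) checks out.
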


\begin{dfn}
Let $V$ be a vector space over $F$.
\begin{enumerate}
\item Let $v\in V$ and $f\in\Sm{V}$. We say that $f$ vanishes asymptotically along $v$ if there exists an open neighborhood $U\subseteq V$ of $v$ and $\rho\in\Sm[c]{U}$ such that $\left(p^* \rho\right)\cdot \left(m^* f\right) \in \Sch{U\times F}$ where $m:V\times F\to V$ is given by $m\left(v,\la\right) = \la v$ and $p:V\times F\to F$ is the projection.

\item Let $\xi\in\Dist{V}$. We say that $\xi$ is smooth at $\left(x,w\right)\in V\times V^*$ if there exists $\rho\in\Sm[c]{V}$ such that $\rho\left(x\right)=1$ and $\fourier[\rho\xi]$ vanishes asymptotically along $w$.

\item Let $\xi\in\Dist{V}$. Define its wave front set by 
\[
\WF{\xi}:=
\left\{
	\left(x,w\right)\in V\times V^*		\,|\,
	\xi \text{ is not smooth at } \left(x,w\right)
\right\}
\]
For a point $x\in V$ let $\WF[x]{\xi}:=\WF{\xi}\cap \left\{x\right\}\times V^*$.
\end{enumerate} 
\end{dfn}

\begin{dfn}
Let $\nu:X\to Y$ be a morphism of $F$-manifolds and $\Ga\subseteq T^* Y$. We define its pullback by 
\[\nu^* \Ga:= \left\{
	\left(x, \eta\right) \in T^{*}X
	\,|\,
	\exists \eta'\in T^{*}_{f\left (x\right )}Y: \left(f\left(x\right), \eta'\right)\in \Ga,
	d^*_{f\left(x\right)}\nu \left(\eta'\right)= \eta
\right\}
\]
\end{dfn}

\begin{thm}
Let $\nu:V\to V$ be a diffeomorphism of the $F$-vector space $V$ and $\xi\in\Dist{V}$. Then \[\WF{\nu^*\xi}=\nu^*\WF{\xi}\]
\end{thm}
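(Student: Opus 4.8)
The plan is to reduce the statement to the behavior of the wave front set under a diffeomorphism, which is the natural $p$-adic analogue of H\"ormander's transformation law for wave front sets under a change of coordinates. First I would recall that the wave front set $\WF{\xi}$ was defined in a coordinate-dependent way via the Fourier transform on $V$, so the only content here is that this notion is in fact intrinsic (transforms as a subset of the cotangent bundle $T^*V$). Since $\nu$ is a diffeomorphism, $\nu^*\Ga$ for $\Ga\subseteq T^*V$ specializes to $\nu^*\Ga = \{(x,\eta)\mid (\nu(x), (d_x\nu)^{-*}\eta)\in\Ga\}$, i.e. it is exactly the pushforward of $\Ga$ along the cotangent lift of $\nu$; so the claim $\WF{\nu^*\xi} = \nu^*\WF{\xi}$ says the cotangent lift of $\nu$ carries $\WF{\xi}$ to $\WF{\nu^*\xi}$.

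My main step would be to prove the statement first for \emph{affine} (in particular linear) automorphisms $\nu$ of $V$, where everything can be checked by hand: translations only move the base point $x$ and commute with the Fourier transform up to a unitary character, so they clearly preserve smoothness at $(x,w)$ with the base point translated; linear automorphisms $A$ intertwine $\fourier$ with $(A^{-1})^*$ up to a Jacobian constant, so smoothness of $\xi$ at $(x,w)$ is equivalent to smoothness of $(A^{-1})^*\xi$ at $(Ax, (A^{-1})^*w)$, which is precisely the asserted transformation law. The nontrivial part is passing from affine maps to general analytic diffeomorphisms, and here the key tool is locality: smoothness of $\xi$ at $(x,w)$ depends only on the germ of $\xi$ near $x$ (because of the cutoff $\rho$ with $\rho(x)=1$ in the definition), so I can work in a small neighborhood of any point $x_0$ and of $\nu(x_0)$, where $\nu$ looks like its affine approximation $x\mapsto \nu(x_0) + d_{x_0}\nu\,(x-x_0)$ plus a higher-order correction. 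The claim is that the higher-order correction does not change the wave front set; concretely one shows that if $\nu_1,\nu_2$ are two diffeomorphisms agreeing to first order at $x_0$ then $\nu_1^*\xi$ and $\nu_2^*\xi$ have the same $\WF$ at $x_0$, because near $x_0$ one is obtained from the other by a diffeomorphism tangent to the identity, and such a diffeomorphism acts trivially on $\WF$ at the fixed point.

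I expect the main obstacle to be this last point: proving that a diffeomorphism tangent to the identity at $x_0$ does not move $\WF[x_0]{\xi}$. In the real case H\"ormander handles this with a stationary-phase / non-stationary-phase estimate on the Fourier integral, localizing in both $x$ and the frequency direction $w$. In the non-Archimedean setting the analogous estimate is cleaner — there is no oscillation, only the ultrametric inequality — but one still has to set it up carefully: cut off $\xi$ by $\rho$ supported in a tiny ball $B_r(x_0)$, expand $\nu$ as an analytic map on that ball so that $\nu(y) = \nu(x_0) + d_{x_0}\nu(y-x_0) + O(r^2)$-terms, write the Fourier transform of $\nu^*(\rho\xi)$ as an integral against a character $\chi(\langle w, \nu(y)\rangle)$, and observe that on a small enough ball the correction term contributes a character that is locally constant in the relevant regime, hence does not affect vanishing asymptotically along a direction. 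Rather than reproving all of this, I would cite Heifetz \cite{wf} (Theorem 2.8) or the treatments in \cite{Aiz1}, \cite{Aiz2} for the invariance of $\WF$ under analytic diffeomorphisms, and give the short reduction above — affine case by direct computation, general case by locality plus the citation — as the proof.
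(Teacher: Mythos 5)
The paper never proves this statement: it is quoted as a standard fact of non-Archimedean wave-front-set theory, with the surrounding text pointing to Heifetz \cite{wf} and to \cite{Aiz1}, \cite{Aiz2} — which is exactly where your argument ultimately lands, since the one genuinely hard step (that an analytic diffeomorphism agreeing with an affine map to first order at $x_0$ does not move $\WF[x_0]{\xi}$) is the part you delegate to those references, the affine case being routine Fourier-transform bookkeeping. So your treatment is essentially the same as the paper's reliance on the literature and is acceptable; just note that your heuristic for the higher-order correction (the character being locally constant ``in the relevant regime'') is not immediate as stated, because the $O(r^{2})$ error in $y$ is paired against arbitrarily large covectors $w$, and balancing these is precisely what the cited estimates accomplish.
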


\begin{cor}
The definition of wave front set extends to generalized sections of vector bundles over manifolds.
\end{cor}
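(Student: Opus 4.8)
The plan is to reduce everything, via charts and local trivializations, to the already-established vector-space case, using the diffeomorphism-invariance theorem $\WF{\nu^{*}\xi}=\nu^{*}\WF{\xi}$ above to guarantee that the glued object is well defined.

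First I would extend the definition of the wave front set verbatim to a distribution $\xi\in\Dist{U}$ on an \emph{open} subset $U\subseteq V$: declare $\xi$ smooth at $(x,w)\in U\times V^{*}$ if there is $\rho\in\Sm[c]{U}$ with $\rho(x)=1$ such that $\fourier[\rho\xi]$ — formed after extending the compactly supported distribution $\rho\xi$ by zero to $V$ — vanishes asymptotically along $w$, and set $\WF{\xi}\subseteq T^{*}U=U\times V^{*}$ accordingly. Since this condition involves $\xi$ only on an arbitrarily small neighbourhood of $x$, one gets locality: $\WF{\xi|_{U'}}=\WF{\xi}\cap T^{*}U'$ for every open $U'\subseteq U$. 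Alongside this I would record two elementary stability facts that make the bundle and density twists harmless: if $g\in\Sm{U}$ then $\WF{g\xi}\subseteq\WF{\xi}$ (multiplication by a locally constant function commutes with the cutoff $\rho$ and does not affect asymptotic vanishing), hence $\WF{g\xi}=\WF{\xi}$ when $g$ is nowhere vanishing; and $\WF{\xi+\xi'}\subseteq\WF{\xi}\cup\WF{\xi'}$.

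Next, for an $F$-analytic manifold $M$ and $\xi\in\genfun{M}$, I would fix an atlas $\{(U_{i},\phi_{i})\}$ with $\phi_{i}\colon U_{i}\xrightarrow{\sim}V_{i}\subseteq V$ open, choose on each $U_{i}$ a nowhere-vanishing smooth density to identify $\genfun{U_{i}}$ with $\Dist{U_{i}}$, transport $\xi|_{U_{i}}$ to $\xi_{i}\in\Dist{V_{i}}$, and set $\WF{\xi}\cap T^{*}U_{i}:=\phi_{i}^{*}\WF{\xi_{i}}$. On an overlap $U_{i}\cap U_{j}$ the two prescriptions differ by the transition diffeomorphism $\phi_{ij}=\phi_{j}\circ\phi_{i}^{-1}$ between open subsets of $V$ and by multiplication by the nowhere-vanishing ratio of the two chosen densities; the theorem $\WF{\nu^{*}\xi}=\nu^{*}\WF{\xi}$, the functoriality $\phi_{i}^{*}\phi_{ij}^{*}=(\phi_{ij}\circ\phi_{i})^{*}=\phi_{j}^{*}$, and the stability facts above show the two agree on $T^{*}(U_{i}\cap U_{j})$, so the local pieces glue to a well-defined $\WF{\xi}\subseteq T^{*}M$. (Invariance of $\WF{\xi}$ under scaling in the cotangent fibre is inherited from the vector-space case, where asymptotic vanishing along $w$ depends only on the line through $w$.)

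Finally, for a complex or real vector bundle $p\colon E\to M$ and $\xi\in\genfun{M,E}$, I would trivialize $E$ over each chart, $E|_{U_{i}}\cong U_{i}\times\bb{C}^{r}$, write $\xi|_{U_{i}}$ as an $r$-tuple $(\xi_{i}^{(1)},\dots,\xi_{i}^{(r)})$ of generalized functions, and set $\WF{\xi}\cap T^{*}U_{i}:=\bigcup_{k}\WF{\xi_{i}^{(k)}}$. Since the bundle transitions are smooth $\GL{r}$-valued, each component in one trivialization is a smooth-coefficient linear combination of the components in another; by $\WF{g\eta}\subseteq\WF{\eta}$, subadditivity, and invertibility of the transition matrices, this union is independent of the trivialization, and combined with the manifold gluing of the previous paragraph it yields a well-defined $\WF{\xi}\subseteq T^{*}M$. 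The only genuine input is the diffeomorphism-invariance theorem; the main — though routine — obstacle is the bookkeeping verifying that multiplication by smooth nowhere-vanishing functions and by smooth $\GL{r}$-valued matrices leaves the wave front set untouched, which is exactly what licenses the patching across chart and trivialization changes.
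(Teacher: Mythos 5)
Your proposal is correct and follows essentially the same route the paper intends: the corollary is stated without an explicit proof precisely because it is meant to follow from the preceding diffeomorphism-invariance theorem by working in charts and local trivializations, which is exactly your argument (and in the non-Archimedean setting the smooth twisting functions and transition matrices are locally constant, so your stability facts are immediate).
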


\begin{dfn}
Let $\Ga\subseteq T^{*}M$ be a closed subset. Define \[\genfun[\Ga]{M,E}:=\left\{ \xi\in\genfun{M,E}\,|\,\WF{\xi}\subseteq\Ga \right\}\]
\end{dfn}

We equip $\genfun[\Ga]{M,E}$ with a topology. In order to do this it is enough to define the topology for the case where $M=V$ is a vector space:
\begin{dfn}
Let $V$ be an $F$-vector space. Define a topology on $\genfun[\Ga]{V}$ by $\xi_{n}\to \xi$ if $\xi_{n}\to\xi$ weakly in $\genfun{V}$ and for any $v\in V$ $\exists \eps >0$ and $\rho\in\Sm[c]{B_{\eps}\left(v\right)}$ such that $\forall \vp\in V^{*}$:
\[m^{*}\fourier[\rho\xi_n]\big|_{B_{\eps}\left(\vp\right)\times F}
 \to m^{*}\fourier[\rho\xi]\big|_{B_{\eps}\left(\vp\right)\times F}\]
\end{dfn}

As smooth function have trivial wave front sets, $\Sm{M}$ embeds into $\genfun[\Ga]{M}$.
\begin{thm}
$\Sm{M}$ is dense in $\genfun[\Ga]{M}$ for any closed $\Ga\subseteq T^{*}M$.
\end{thm}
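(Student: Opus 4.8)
The plan is a mollification argument in the spirit of Hörmander, which is short on $l$-spaces because convolution with the normalised indicator of a small compact open subgroup commutes, for large index, with multiplication by the indicator of a fixed ball, and hence does not spread the wave front set.

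First I would localise to the case $M = V$, a finite-dimensional $F$-vector space. Since $M$ is totally disconnected it has a clopen cover by chart domains, which we may refine to a clopen partition $M = \bigsqcup_i M_i$ with each $M_i$ an open ball in some $F^{n}$. A compactly supported measure meets only finitely many $M_i$, so restriction identifies $\genfun[\Ga]{M}$ with $\prod_i \genfun[\Ga_i]{M_i}$, where $\Ga_i := \Ga\cap T^{*}M_i$, compatibly with the inclusions $\Sm{M_i}\hookrightarrow\genfun[\Ga_i]{M_i}$; density therefore reduces factor by factor. Each ball is a compact open subgroup of $F^{n}$, and extending a generalized function on it by zero we may assume $M = V$.

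So let $\xi\in\genfun[\Ga]{V}$. Fix a Haar measure on $V$ and a decreasing neighbourhood basis $K_1\supseteq K_2\supseteq\cdots$ of $0\in V$ by compact open subgroups, and put $e_n := \operatorname{vol}\left(K_n\right)^{-1}\mathbf{1}_{K_n}$ and $\xi_n := \xi*e_n$. Being $K_n$-invariant, $\xi_n$ is locally constant, so $\xi_n\in\Sm{V}\subseteq\genfun[\Ga]{V}$. Weak convergence in $\genfun{V}$ is immediate: against a fixed $\mu\in\SmM[c]{V}$ one has $\langle\xi_n,\mu\rangle = \langle\xi, e_n*\mu\rangle = \langle\xi,\mu\rangle$ once $\mu$ is $K_n$-invariant. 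For the wave-front part of the topology, fix $v\in V$, any $\eps>0$, and the cutoff $\rho := \mathbf{1}_{B_{\eps}\left(v\right)}\in\Sm[c]{B_{\eps}\left(v\right)}$; once $K_n\subseteq B_{\eps}\left(0\right)$ the function $\rho$ is $K_n$-invariant, and a short computation then gives $\rho\cdot\xi_n = \left(\rho\xi\right)*e_n$. Since $\rho\xi$ has compact support, $\fourier[\rho\xi]\in\Sm{V^{*}}$, and this identity reads $\fourier[\rho\xi_n] = \mathbf{1}_{K_n^{\perp}}\cdot\fourier[\rho\xi]$ with $K_n^{\perp}\subseteq V^{*}$ the compact open annihilator of $K_n$. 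As the subgroups $K_n^{\perp}$ exhaust $V^{*}$, for every $\vp\in V^{*}$ the pullbacks $m^{*}\fourier[\rho\xi_n]$ and $m^{*}\fourier[\rho\xi]$ agree on $B_{\eps}\left(\vp\right)\times F$ against any fixed compactly supported measure for $n$ large, which is exactly the convergence required; hence $\xi_n\to\xi$ in $\genfun[\Ga]{V}$.

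The chart reduction and the verification that $\xi*e_n$ lies in $\Sm{V}$ are routine. The one delicate point is the last step: one must check that $\mathbf{1}_{K_n^{\perp}}\fourier[\rho\xi]\to\fourier[\rho\xi]$ in the precise (Hörmander-type) topology on $\genfun[\Ga]{V}$, not merely weakly. In the directions avoiding $\Ga$ this is automatic, since there $\fourier[\rho\xi]$ already vanishes asymptotically, so cutting by $\mathbf{1}_{K_n^{\perp}}$ changes nothing for large $n$; in the remaining directions only the weak estimate is demanded, which $\mathbf{1}_{K_n^{\perp}}\uparrow 1$ supplies. Keeping these two regimes apart is the only place where the argument needs more than soft functional analysis.
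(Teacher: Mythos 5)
The paper states this theorem as a background fact and supplies no proof of its own (it is imported from the literature on non-Archimedean wave front sets), so there is no ``paper's argument'' to match yours against; judged on its own, your mollification proof is the standard one and is essentially correct. The reduction to $M=V$ via a disjoint clopen cover by balls is legitimate and consistent with the paper's chart-by-chart definition of the topology, and the key computation is right: for $\rho=\mathbf{1}_{B_{\eps}\left(v\right)}$ and $K_n\subseteq B_{\eps}\left(0\right)$, ultrametricity makes $\rho$ a $K_n$-invariant function, whence $\rho\cdot\left(\xi*e_n\right)=\left(\rho\xi\right)*e_n$ and $\fourier[\rho\xi_n]=\mathbf{1}_{K_n^{\perp}}\cdot\fourier[\rho\xi]$, while the weak convergence is immediate from $e_n*\mu=\mu$ for $n$ large. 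Two points in your last step should be said more carefully. First, the convergence required in the second clause of the topology cannot be read as eventual equality on all of $B_{\eps}\left(\vp\right)\times F$: your $\fourier[\rho\xi_n]$ is compactly supported while $\fourier[\rho\xi]$ is not in the bad directions, so under that reading the theorem itself would fail; the intended (H\"ormander--Heifetz) condition imposes nothing beyond weak convergence in directions inside $\Ga$ and a uniform asymptotic-vanishing statement in directions outside $\Ga$. Second, your phrase that cutting by $\mathbf{1}_{K_n^{\perp}}$ ``changes nothing for large $n$'' in the good directions is literally false pointwise (it changes the function outside $K_n^{\perp}$); the correct justification is that if $\left(p^{*}\rho'\right)\cdot\left(m^{*}\fourier[\rho\xi]\right)$ lies in $\Sch{U\times F}$, hence has compact support, then on that support $\mathbf{1}_{K_n^{\perp}}$ is eventually identically $1$ and off it both products vanish, so $\left(p^{*}\rho'\right)\cdot\left(m^{*}\fourier[\rho\xi_n]\right)$ is \emph{eventually equal} to $\left(p^{*}\rho'\right)\cdot\left(m^{*}\fourier[\rho\xi]\right)$; since truncation only shrinks supports, the required uniformity is automatic. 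With these clarifications your argument is complete and would serve as a proof of the stated theorem.
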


\begin{dfn}
Let $\Lambda\subseteq T^{*}M$ be a subset. Define 
\[
	\genfun[\Lambda]{M}
	:= \bigcup_{ 
		\substack{
			\Ga\subseteq T^{*}M \text{ closed} \\
			\Ga\subseteq \Lambda\cup M\times\left\{0\right\}
		}
	} 
	\genfun[\Ga]{M}
\]
Equip it with the colimit topology. As $\Sm{M}\subseteq \genfun[\Ga]{M}$ is dense for any $\Ga$ it is also dense in $\genfun[\Lambda]{M}$.
\end{dfn}

\begin{dfn}
Let $\nu:M\to N$ be a map of $F$-manifolds. Define 
\[S_{\nu}:=\left\{
	\left( \nu\left(x\right), w\right)\in T^{*}N
	\, | \,
	x\in M, d_{x}^{*}\nu\left( w \right) = 0
\right\}\]
\end{dfn}

\begin{exmd}
If $\iota: M\hookrightarrow N$ is an inclusion of manifolds, then $S_{\iota} = \normal[N]{M}$
\end{exmd}

\begin{thm}[\cite{wf}]
Let $\nu: M\to N$ be a map of $F$-manifolds, and $E$ a vector bundle over $N$. Let $\Ga\subseteq T^* N$ be a closed subset such that $\Ga\cap S_{\nu} \subseteq N\times\left \{0\right\}$. Then the pullback map of smooth functions
\[\nu^{*}:\Sm{N, E}\to\Sm{M, \nu^{*}E}\]
has a unique continuous extension 
\[\nu^{*}:\genfun[\Ga]{N,E}\to\genfun[\nu^{*}\left(\Ga\right)]{M,\nu^{*}E}\]
Moreover, for any $\xi\in\genfun[\Gamma]{N,E}$ we have $\supp[\nu^{*}\xi]\subseteq \nu^{-1}\left(\supp[\xi]\right)$.
\end{thm}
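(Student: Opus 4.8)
The plan is to follow Hörmander's method for pulling back distributions (\cite{Hor}, Theorem 8.2.4) in the non-Archimedean form of \cite{wf}. Everything asserted — existence and uniqueness of the extension, the bound $\WF{\nu^{*}\xi}\subseteq\nu^{*}\left(\Ga\right)$, and the support estimate — is local on $M$ and on $N$, and $E$ is locally trivial, so I would first reduce to $M\subseteq F^{m}$, $N\subseteq F^{n}$ open and $E=\bb{C}_{N}$. Then I would factor $\nu$ through its graph: $\nu=\pr\circ\gamma$, where $\gamma\colon M\to M\times N$, $\gamma\left(x\right)=\left(x,\nu\left(x\right)\right)$, is a closed embedding and $\pr\colon M\times N\to N$ is the projection. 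For the submersion $\pr$ the pullback of generalized sections always exists and is continuous — this is dual to Harish-Chandra's submersion principle (\cref{HCS}) — and since $d^{*}\pr$ is injective one has $S_{\pr}\subseteq N\times\left\{0\right\}$, so the hypothesis on $\Ga$ is vacuous here and $\WF{\pr^{*}\xi}\subseteq\pr^{*}\left(\Ga\right)$, $\supp[\pr^{*}\xi]\subseteq\pr^{-1}\left(\supp[\xi]\right)$ come for free. Hence it remains to treat a closed embedding $\iota\colon M\hookrightarrow N$, and by the analytic implicit function theorem I may assume $\iota\colon F^{k}\hookrightarrow F^{n}$ is the linear inclusion $x'\mapsto\left(x',0\right)$, writing coordinates $\left(x',x''\right)$, $x''\in F^{n-k}$, on $F^{n}$.

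In this model $S_{\iota}=\normal[N]{M}$, so the hypothesis $\Ga\cap S_{\iota}\subseteq N\times\left\{0\right\}$ says that over each point of $F^{k}\times\left\{0\right\}$ the only covector of $\Ga$ with vanishing $F^{k}$-component is $0$; in particular every direction in the $\eta''$-plane is excluded from $\WF{\xi}$ over such points. Fix $\xi\in\genfun[\Ga]{F^{n}}$ and a base point $x_{0}=\left(x_{0}',0\right)$. Using this exclusion, the closedness of $\Ga$, compactness of the space of directions, and the microlocal behaviour of the wave front set, I would produce a single cutoff $\rho\in\Sm[c]{F^{n}}$ with $\rho\left(x_{0}\right)=1$ such that $\fourier[\rho\xi]$ vanishes asymptotically along every covector in a conical neighborhood of the whole $\eta''$-sphere; unwinding the definition, this means that for each fixed $\eta'$ the locally constant function $\eta''\mapsto\fourier[\rho\xi]\left(\eta',\eta''\right)$ is supported in $\left|\eta''\right|\le\max\left(R,C\left|\eta'\right|\right)$ for uniform constants, hence compactly supported. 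I would then \emph{define} the restriction locally by the Fourier-side formula
\[
\fourier\left[\iota^{*}\left(\rho\xi\right)\right]\left(\eta'\right):=\int_{F^{n-k}}\fourier[\rho\xi]\left(\eta',\eta''\right)\,d\eta'',
\]
the right side being a finite sum, and patch over a cover of $M$; for smooth $\xi$ this reproduces the ordinary pullback.

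The three assertions then drop out of this formula. If $\left(x_{0}',\eta_{0}'\right)\notin\iota^{*}\left(\Ga\right)$ then $\left(x_{0}',0;\eta_{0}',\eta''\right)\notin\Ga$ for every $\eta''\in F^{n-k}$, and all directions in the $\eta''$-plane are excluded too, so the closure of the set of directions through $\left\{\left(\eta_{0}',\eta''\right)\right\}$ is a compact set disjoint from $\WF{\xi}$ over $x_{0}$; choosing $\rho$ accordingly makes $\fourier[\rho\xi]$ radially compactly supported on a conical neighborhood of it, hence the $\eta''$-integral radially compactly supported near $\eta_{0}'$, i.e. $\left(x_{0}',\eta_{0}'\right)\notin\WF{\iota^{*}\xi}$; this gives $\WF{\iota^{*}\xi}\subseteq\iota^{*}\left(\Ga\right)$. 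Continuity of $\iota^{*}$ is built into the construction, the topology on $\genfun[\Ga]{F^{n}}$ being exactly the one governed by convergence of the $\fourier[\rho\xi]$ on conical neighborhoods, which the integral respects; uniqueness of the extension follows because $\Sm{N,E}$ is dense in $\genfun[\Ga]{N,E}$ and a continuous map to a Hausdorff space is determined on a dense set; and $\supp[\iota^{*}\xi]\subseteq\iota^{-1}\left(\supp[\xi]\right)$ is visible directly, since if $\xi$ vanishes near $\iota\left(x_{0}\right)$ then $\rho\xi=0$ for $\rho$ supported near $x_{0}$ and so $\iota^{*}\xi=0$ near $x_{0}$. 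Recombining the embedding $\gamma$ and the submersion $\pr$ yields the statement for general $\nu$.

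The step I expect to be the main obstacle is the production of a single cutoff $\rho$ in the second paragraph: one must match the non-Archimedean notion of ``vanishing asymptotically along $w$'' with genuine integrability of the partial Fourier transform, keep the cutoff and the radii $R,C$ uniform over a neighborhood of $x_{0}$, and treat the relevant set of directions including those at infinity as a bona fide compact set, so that finitely many conical neighborhoods — and, via the microlocal behaviour of the wave front set, one product cutoff — suffice. This compactness-and-microlocal bookkeeping, together with the functoriality needed to chain the embedding and the submersion, is where the technical content of \cite{wf} is concentrated.
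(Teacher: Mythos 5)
The paper itself gives no proof of this statement: it is quoted from Heifetz \cite{wf} (Theorem 2.8), so there is no internal argument to compare yours against. Judged on its own terms, your outline follows the route one expects that reference to take, namely H\"ormander's argument from \cite{Hor} transported to the non-Archimedean setting: localize and trivialize $E$, factor $\nu$ through its graph as a closed embedding $\gamma$ followed by the projection $\pr$, treat the projection by dualizing Harish-Chandra's submersion principle (\cref{HCS}), and define restriction along a linear embedding by integrating the partial Fourier transform over the conormal variables, the hypothesis $\Ga\cap S_{\nu}\subseteq N\times\{0\}$ being exactly what makes $\eta''\mapsto\fourier[\rho\xi]\left(\eta',\eta''\right)$ compactly supported for each $\eta'$. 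Two points you assert should at least be recorded explicitly, though both are straightforward: (i) the chaining condition, i.e.\ that $\pr^{*}\left(\Ga\right)\cap S_{\gamma}$ lies in the zero section — a covector $\left(\zeta,\eta\right)$ conormal to the graph satisfies $\zeta=-d^{*}\nu\left(\eta\right)$, so $\zeta=0$ forces $\left(\nu\left(x\right),\eta\right)\in\Ga\cap S_{\nu}$ and hence $\eta=0$ — together with $\gamma^{*}\pr^{*}\left(\Ga\right)=\nu^{*}\left(\Ga\right)$; and (ii) the wave front bound for the submersion pullback, which does not come ``for free'' from duality to \cref{HCS} but needs the (standard) local computation for $\pr^{*}\xi=1\boxtimes\xi$.

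The genuine gap is the one you flag yourself: the production of a single cutoff $\rho$, with uniform radii $R,C$, valid simultaneously for a compact set of directions. This rests on the lemma that asymptotic vanishing of $\fourier[\rho\xi]$ along $w$ persists when $\rho$ is replaced by a finer cutoff, plus a compactness argument over the relevant directions, and it is what makes your Fourier-side formula well defined, independent of the choice of $\rho$, compatible with gluing, and continuous for the topology the paper puts on $\genfun[\Ga]{N,E}$ (which is needed before density of $\Sm{N,E}$ can give uniqueness). That lemma is precisely the technical core of \cite{wf}; without it your text is a faithful and correctly structured sketch of the intended proof rather than a complete argument — which is consistent with the paper's own choice to cite the result rather than prove it.
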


\begin{cor}
Let $\nu: M\to N$ be a map of $F$-manifolds, and $E$ a vector bundle over $N$.
Then the pull back map of smooth functions
\[\nu^{*}:\Sm{N, E}\to\Sm{M, \nu^{*}E}\]
has a unique continuous extension 
\[\nu^{*}:\genfun[S_{\nu}^{c}]{N,E}\to\genfun{M,\nu^{*}E}\]
Moreover, for any $\xi\in\genfun[S_{\nu}^{c}]{N,E}$ we have $\supp[\nu^{*}\xi]\subseteq \nu^{-1}\left(\supp[\xi]\right)$.
\end{cor}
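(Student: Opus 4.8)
The plan is to deduce this corollary from the preceding theorem of Heifetz by unwinding the definition of $\genfun[S_{\nu}^{c}]{N,E}$ as a colimit over closed cones. By definition,
\[
\genfun[S_{\nu}^{c}]{N,E} = \bigcup_{\Ga} \genfun[\Ga]{N,E},
\]
the union ranging over all closed $\Ga\subseteq T^{*}N$ with $\Ga\subseteq S_{\nu}^{c}\cup N\times\{0\}$, equipped with the colimit topology. First I would check that every such $\Ga$ satisfies the hypothesis of the theorem, namely $\Ga\cap S_{\nu}\subseteq N\times\{0\}$; this is immediate, since $\Ga\cap S_{\nu}\subseteq (S_{\nu}^{c}\cup N\times\{0\})\cap S_{\nu} = N\times\{0\}\cap S_{\nu}\subseteq N\times\{0\}$. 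Hence the theorem supplies, for each such $\Ga$, a unique continuous extension $\nu^{*}\colon\genfun[\Ga]{N,E}\to\genfun[\nu^{*}(\Ga)]{M,\nu^{*}E}\subseteq\genfun{M,\nu^{*}E}$ of the pullback of smooth sections, together with the bound $\supp[\nu^{*}\xi]\subseteq\nu^{-1}(\supp[\xi])$.

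Next I would verify that these extensions are compatible on overlaps. If $\Ga_{1}\subseteq\Ga_{2}$ are two admissible cones, the inclusion $\genfun[\Ga_{1}]{N,E}\hookrightarrow\genfun[\Ga_{2}]{N,E}$ is continuous, and both resulting composites into $\genfun{M,\nu^{*}E}$ restrict to the smooth-section pullback on the dense subspace $\Sm{N,E}\subseteq\genfun[\Ga_{1}]{N,E}$. Since $\genfun{M,\nu^{*}E}$ is Hausdorff for the weak topology, the uniqueness clause of the theorem forces these two maps to coincide on $\genfun[\Ga_{1}]{N,E}$. Therefore the individual extensions glue to a single map $\nu^{*}\colon\genfun[S_{\nu}^{c}]{N,E}\to\genfun{M,\nu^{*}E}$, and this map is continuous because the source carries the colimit topology and its restriction to each $\genfun[\Ga]{N,E}$ is continuous.

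Finally, uniqueness of the extension over all of $\genfun[S_{\nu}^{c}]{N,E}$ follows from the density of $\Sm{N,E}$ in $\genfun[S_{\nu}^{c}]{N,E}$ (recorded right after the definition of $\genfun[\Lambda]{M}$) together with Hausdorffness of the target: any two continuous extensions agree on the dense subspace $\Sm{N,E}$, hence everywhere. The support bound $\supp[\nu^{*}\xi]\subseteq\nu^{-1}(\supp[\xi])$ holds for every $\xi\in\genfun[S_{\nu}^{c}]{N,E}$ because $\xi$ lies in some $\genfun[\Ga]{N,E}$ to which the theorem's support bound already applies. I expect the only genuinely delicate point to be the topological bookkeeping in the gluing step — confirming that the topologies on the $\genfun[\Ga]{N,E}$ interact with the colimit topology so that continuity and uniqueness descend correctly — whereas verifying the wave-front hypothesis and the support statement is routine.
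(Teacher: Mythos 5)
Your argument is correct and is exactly the intended deduction: the corollary follows from the preceding theorem by noting that every closed $\Ga\subseteq S_{\nu}^{c}\cup N\times\left\{0\right\}$ satisfies $\Ga\cap S_{\nu}\subseteq N\times\left\{0\right\}$, and then gluing the resulting extensions over the directed family of such cones using density of $\Sm{N,E}$ and continuity, with the support bound inherited piecewise. The paper treats this as immediate in the same way, so there is nothing to add beyond your (correct) bookkeeping of the colimit topology.
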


\begin{cor}
\label{rest-crit}
Let $M\subseteq N$ be a submanifold, and $E$ a vector bundle over $N$.
Then we can define a restiction map 
\[\genfun[\left({\normal[N]{M}}\right)^{c}]{N,E}\to\genfun{M,E\big|_{M}}\]
as the pullback by the inclusion map, and
\[\supp[\xi\big|_{M}] \subseteq \supp[\xi] \cap M\]
for any $\xi\in\genfun[\left({\normal[N]{M}}\right)^{c}]{M}$.
\end{cor}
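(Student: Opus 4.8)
The plan is to obtain this as an immediate specialization of the preceding corollary to the case where $\nu$ is the inclusion map $\iota\colon M\hookrightarrow N$. First I would recall the example stated just above, namely that for an inclusion one has $S_{\iota}=\normal[N]{M}$, the conormal variety of $M$ in $N$; hence $S_{\iota}^{c}=\left(\normal[N]{M}\right)^{c}$, the complement taken inside $T^{*}N$ (with the zero section adjoined), which is exactly the decoration appearing in the source space $\genfun[\left(\normal[N]{M}\right)^{c}]{N,E}$. Next I would note that, by the definition of the pullback of a vector bundle along an inclusion, $\iota^{*}E=E\big|_{M}$.

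With these two identifications in hand, substituting $\nu=\iota$ into the previous corollary produces a unique continuous extension of the restriction of smooth sections $\iota^{*}\colon\Sm{N,E}\to\Sm{M,E\big|_{M}}$ to a map $\iota^{*}\colon\genfun[\left(\normal[N]{M}\right)^{c}]{N,E}\to\genfun{M,E\big|_{M}}$, and this is precisely the claimed restriction map $\xi\mapsto\xi\big|_{M}$. For the support bound, the ``moreover'' clause of the same corollary gives $\supp[\iota^{*}\xi]\subseteq\iota^{-1}\!\left(\supp[\xi]\right)$, and since $\iota$ is an inclusion $\iota^{-1}\!\left(\supp[\xi]\right)=\supp[\xi]\cap M$, which is the assertion.

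I do not expect any real obstacle here; the entire analytic content is already carried by the earlier pullback theorem of Heifetz and its corollary. The only point requiring a moment's care is the purely notational check that the hypothesis $\Ga\cap S_{\nu}\subseteq N\times\{0\}$ of that theorem, applied with $\nu=\iota$, unwinds to the condition $\WF{\xi}\subseteq\left(\normal[N]{M}\right)^{c}$ that defines membership in the source space, so that no generality is lost in passing between the two formulations.
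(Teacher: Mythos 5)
Your proposal is correct and follows exactly the route the paper intends: specializing the preceding pullback corollary to the inclusion $\iota\colon M\hookrightarrow N$, using the example $S_{\iota}=\normal[N]{M}$, identifying $\iota^{*}E=E\big|_{M}$, and reading off the support bound from $\supp[\iota^{*}\xi]\subseteq\iota^{-1}\left(\supp[\xi]\right)=\supp[\xi]\cap M$. The notational check you flag is handled automatically by the definition of $\genfun[\Lambda]{N,E}$ as a union over closed $\Ga\subseteq\Lambda\cup N\times\left\{0\right\}$, so the hypothesis $\Ga\cap S_{\iota}\subseteq N\times\left\{0\right\}$ indeed holds.
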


\subsection{Representation Theory of $l$-Groups}
Let $G$ be an $l$-group.
\begin{dfn}
A morphism $\pi:G\to \GL[V]{}$ for some complex vector space $V$ is called a representation of $V$.
\end{dfn}
\begin{dfn}
Given a representation $\pi:G\to \GL[V]{}$, a vector $v\in V$ is called smooth if its stabilizer $G_{v}$ is open in $G$. The subspace of smooth vectors $\sm{V}$ is a subrepresentation denoted by $\sm{\pi}:G\to \GL[\sm{V}]{}$.
$\pi$ is smooth if $V=\sm{V}$.
\end{dfn}
\begin{dfn}
Given a representation $\pi:G\to V$ we define the dual representation $\pi^{*}:G\to \GL[V^{*}]{}$ given by
\[
\left\langle \pi^{*}\left(g\right)\varphi, v\right\rangle
= \left\langle \varphi, \pi\left(g^{-1}\right)v\right\rangle
\]
The contragredient representation of $\pi$ is $\widetilde{\pi}:=\sm{\left(\pi^{*}\right)}$.
\end{dfn}
\begin{dfn}
A representation $\pi:G\to\GL[V]{}$ is admissible if for any compact subgroup $K\subseteq G$, the space of $K$-invariants $V^{K}$ is finite-dimensional.
\end{dfn}

\subsection{Representations of $\GL[F]{n}$}
\begin{thm}[Bernstein-Zelevinsky, \cite{BZ1}-3.25]
Let $\pi$ be a smooth irreducible representation of $\GL[F]{n}$, then $\pi$ is admissible.
\end{thm}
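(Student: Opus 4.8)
The plan is to argue by induction on $n$. In the inductive step one first reduces an arbitrary irreducible smooth representation $\pi$ of $G:=\GL[F]{n}$ to the case where $\pi$ is supercuspidal, and then handles the supercuspidal case via the Bernstein--Zelevinsky theory of derivatives attached to the mirabolic subgroup. Two formal facts are used throughout. First, admissibility passes to subrepresentations and quotients, since taking invariants under a compact open subgroup is an exact functor on smooth representations. Second, parabolic induction preserves admissibility: if $P=MN\subseteq G$ is a parabolic with Levi $M$ and $\sigma$ is an admissible smooth representation of $M$, then $\operatorname{Ind}_P^{G}\sigma$ is admissible. This follows from the Iwasawa decomposition $G=PK$, $K:=\GL[\mathcal{O}_F]{n}$: the restriction of $\operatorname{Ind}_P^{G}\sigma$ to $K$ is $\operatorname{Ind}_{P\cap K}^{K}\bigl(\sigma|_{P\cap K}\bigr)$, whose space of $K'$-invariants, for a compact open $K'\le K$, is a finite direct sum --- indexed by the finite double-coset set $(P\cap K)\backslash K/K'$ --- of finite-dimensional spaces of invariants of $\sigma$.

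Assume the theorem for $\GL[F]{m}$ with every $m<n$, and let $\pi$ be irreducible smooth on $G$. If $\pi$ is not supercuspidal, some Jacquet module $r_N(\pi)$ along a proper parabolic $P=MN$ is nonzero; choose such a $P$ maximal. Being irreducible, $\pi$ is finitely generated, so by Jacquet's lemma $r_N(\pi)$ is a finitely generated $M$-module and admits an irreducible quotient $\tau$, which is supercuspidal by the maximality of $P$. Since $M$ is a product of groups $\GL[F]{m_i}$ with each $m_i<n$, the inductive hypothesis makes $\tau$ admissible. Frobenius reciprocity gives $\Hom[G]{\pi}{\operatorname{Ind}_P^{G}\tau}=\Hom[M]{r_N(\pi)}{\tau}\neq 0$, so $\pi$ embeds into the admissible representation $\operatorname{Ind}_P^{G}\tau$ and is therefore admissible. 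Hence it remains to prove that irreducible supercuspidal representations of $G$ are admissible (this sub-step uses nothing about $G$ itself beyond supercuspidality, so no circularity with the induction).

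For the supercuspidal case I would pass to the mirabolic subgroup $P_n\subseteq G$, the stabilizer of a nonzero linear functional, and use the exact Bernstein--Zelevinsky functors $\Psi^{\pm},\Phi^{\pm}$ between the categories of smooth representations of $P_n$, of $P_{n-1}$, and of $\GL[F]{n-1}$. Every smooth representation $\tau$ of $P_n$ carries a canonical finite filtration whose successive quotients are $(\Phi^{+})^{k-1}\Psi^{+}$ applied to the derivatives $\tau^{(k)}$, each a smooth representation of $\GL[F]{n-k}$. Applied to $\tau=\pi|_{P_n}$ with $\pi$ supercuspidal: $\pi^{(k)}$ is built out of the Jacquet module of $\pi$ along the parabolic of type $(n-k,k)$, so it vanishes for $0<k<n$ because all Jacquet modules of $\pi$ along proper parabolics vanish. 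Thus $\pi|_{P_n}\cong(\Phi^{+})^{n-1}\Psi^{+}\bigl(\pi^{(n)}\bigr)$, and $\pi^{(n)}$ is nonzero (otherwise the whole filtration, hence $\pi|_{P_n}$, would vanish) and one-dimensional by the uniqueness of Whittaker models. Hence $\pi|_{P_n}$ is isomorphic to the Gelfand--Graev representation $(\Phi^{+})^{n-1}\Psi^{+}(\mathbf{1})\cong\operatorname{ind}_{U_n}^{P_n}\psi$, the compact induction of a nondegenerate character $\psi$ of the maximal unipotent $U_n$, which one checks is admissible by descending the tower, using that each $\Phi^{+}$ and $\Psi^{+}$ preserves admissibility and that the starting term is the one-dimensional representation of the trivial group. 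Finally, admissibility of $\pi|_{P_n}$ forces admissibility of $\pi$: for any compact open $K'\le G$ the subgroup $K'\cap P_n$ is compact open in $P_n$ and $\pi^{K'}\subseteq\pi^{K'\cap P_n}$, which is finite-dimensional.

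The main obstacle is the genuinely non-formal content of the supercuspidal step: establishing that $\Phi^{\pm},\Psi^{\pm}$ are exact and preserve admissibility (and finite generation), that the derivative filtration of $\pi|_{P_n}$ is as stated, and hence that the Gelfand--Graev representation of $P_n$ is admissible. Everything else is bookkeeping with parabolic induction, Jacquet modules, and Frobenius reciprocity. As an alternative to the derivative machinery, one may instead prove directly that an irreducible supercuspidal representation has all of its matrix coefficients compactly supported modulo the centre and deduce admissibility from the associated model; that route has a comparable technical core.
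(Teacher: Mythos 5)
The paper offers no proof of this statement --- it is quoted from Bernstein--Zelevinsky (\cite{BZ1}, 3.25) --- so your argument has to stand on its own. Your reduction to the supercuspidal case is the standard one and is essentially fine, with two small caveats: to make an irreducible quotient $\tau$ of $r_N(\pi)$ supercuspidal one must take $P$ \emph{minimal} (equivalently $N$ maximal) among proper parabolics with $r_N(\pi)\neq 0$, not maximal --- though your argument never actually uses supercuspidality of $\tau$, only its admissibility --- and to apply the inductive hypothesis to $\tau$ you need the fact that an irreducible smooth representation of the Levi $M\cong \GL[F]{m_1}\times\dots\times\GL[F]{m_r}$ is an external tensor product of irreducible representations of the factors, which requires its own (Schur-lemma based) argument.

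The supercuspidal step, however, has a genuine gap: $\Phi^{+}$ does \emph{not} preserve admissibility, and $\pi|_{P_n}$ is \emph{not} an admissible representation of the mirabolic subgroup $P_n$. Already for $n=2$ one has $\pi|_{P_2}\cong\operatorname{ind}_{U_2}^{P_2}\psi\cong\Sch{F^{\times}}$ (the Kirillov model), on which $\left[\begin{array}{cc} a & b\\ 0 & 1\end{array}\right]$ acts by $\varphi(x)\mapsto\psi(bx)\varphi(ax)$. For the compact open subgroup $K_N=\left\{\left[\begin{array}{cc} a & b\\ 0 & 1\end{array}\right] : a\in 1+\mathfrak{p}^{N},\ b\in\mathfrak{p}^{N}\right\}$ of $P_2$ (with $\mathfrak{p}\subseteq\mathcal{O}_F$ the maximal ideal), the $K_N$-invariants contain the characteristic functions of every coset $x_0\left(1+\mathfrak{p}^{N}\right)$ with $\left|x_0\right|$ sufficiently small, so they form an infinite-dimensional space. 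Thus $\Phi^{+}$ applied to the one-dimensional (certainly admissible) representation is already non-admissible, and a similar computation with diagonal cosets deep in the dominant cone shows $\operatorname{ind}_{U_n}^{P_n}\psi$ is non-admissible for every $n\ge 2$. Consequently the inclusion $\pi^{K'}\subseteq\pi^{K'\cap P_n}$ yields no finiteness and your concluding deduction collapses; restriction to a closed subgroup simply does not preserve admissibility, which is exactly why the genuine proofs extract finiteness differently --- Bernstein--Zelevinsky from exactness and finiteness (finite length / finite generation) properties of the derivatives of $\pi|_{P_n}$, and Jacquet's general argument from the Cartan decomposition plus Jacquet's lemma, which lets deep torus double cosets act on $\pi^{K}$ through the vanishing Jacquet modules. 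Two further cautions: invoking uniqueness of Whittaker models for a representation not yet known to be admissible is itself a circularity to watch, and the same issue afflicts your suggested alternative via compactly supported matrix coefficients, since matrix coefficients are defined through the smooth dual, whose size is precisely what admissibility controls.
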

\begin{thm}[Gelfand-Kazhdan, \cite{GK}, \cite{BZ1}-7.3]
\label{repOfGLn}
Let $s_{n}\in \GL{n}$ be the matrix given by
\[\left(s_{n}\right)_{i,j}=(-1)^{i}\cdot \delta_{i,n+1-j}\]
Define an automorphism $s$ of $\GL{n}$ given by 
\[g^{s}=s_{n}g^{-T}s_{n}^{-1}\]

Let $\pi$ be an irreducible admissible representation of $\GL{n}$. Let $\pi^{s}$ be the representation $\pi$ with $\GL{n}$-action twisted by the isomorphism $s$. Then $\pi^{s}\cong \widetilde{\pi}$.
\end{thm}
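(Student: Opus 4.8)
The plan is to reduce the $s_n$-twist to the plain transpose-inverse automorphism, and then to extract the isomorphism $\pi^{s}\cong\widetilde{\pi}$ from the Gelfand--Kazhdan theorem on invariant distributions by passing through the distribution character. First I would note that the twist by $s_n$ is cosmetic: writing $\sigma$ for the automorphism $g\mapsto g^{-T}$ of $\GL{n}$, one has $g^{s}=s_n\,\sigma(g)\,s_n^{-1}$, so $\pi^{s}(g)=\pi(s_n)\,\pi^{\sigma}(g)\,\pi(s_n)^{-1}$ and hence $\pi^{s}\cong\pi^{\sigma}$. It therefore suffices to prove $\pi^{\sigma}\cong\widetilde{\pi}$.

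Next, for $f\in\Sch{\GL[F]{n}}$ I would form the operator $\pi(f)=\int f(g)\pi(g)\,dg$; its image lies in $\sm{V}^{K}$ for any compact open $K$ under which $f$ is bi-invariant, and this space is finite-dimensional by admissibility, so $\pi(f)$ has finite rank and $\chi_{\pi}(f):=\operatorname{tr}\pi(f)$ is a well-defined distribution on $\GL[F]{n}$, manifestly invariant under the conjugation action $g\cdot x=gxg^{-1}$. Substituting $h=g^{-T}$ (a Haar-preserving involution of $\GL[F]{n}$) in the defining integral gives $\pi^{\sigma}(f)=\pi(f')$ with $f'(g)=f(g^{-T})$, so $\chi_{\pi^{\sigma}}(f)=\chi_{\pi}(f')$; since inversion and transpose commute, $f'(g)=(\check f)(g^{T})$ where $\check f(g)=f(g^{-1})$, and one has the standard identity $\chi_{\pi}(\check f)=\chi_{\widetilde{\pi}}(f)$ (using that a finite-rank operator and its transpose have equal trace, and that $\widetilde{\pi}(g)$ is the transpose of $\pi(g^{-1})$).

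Now comes the key input: the Gelfand--Kazhdan theorem (\cite{GK}; see also \cite{BZ1}, §7.3), that every conjugation-invariant distribution on $\GL[F]{n}$ is invariant under the transpose anti-involution $g\mapsto g^{T}$. Applying this to $\chi_{\pi}$ and combining with the previous paragraph yields $\chi_{\pi^{\sigma}}(f)=\chi_{\pi}(\check f)=\chi_{\widetilde{\pi}}(f)$, i.e. $\chi_{\pi^{\sigma}}=\chi_{\widetilde{\pi}}$ as distributions. Since $\pi^{\sigma}$ and $\widetilde{\pi}$ are both irreducible admissible and distinct irreducible admissible representations of $\GL[F]{n}$ have distinct distribution characters, this forces $\pi^{\sigma}\cong\widetilde{\pi}$, and hence $\pi^{s}\cong\widetilde{\pi}$, as desired.

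The hard part is the Gelfand--Kazhdan invariance theorem itself. I would prove it by Harish-Chandra descent combined with Bernstein's localization principle: an invariant distribution supported near an element $g_0=su$ (Jordan decomposition) descends to an invariant distribution on a neighbourhood of a nilpotent orbit inside the centralizer of $s$, which is a product of smaller $\GL$'s, so one reduces to a statement about the nilpotent cone; one then uses the classical linear-algebra fact that over any field a matrix is conjugate to its transpose (equal rational canonical form), so that $g\mapsto g^{T}$ preserves each orbit up to an inner automorphism, and controls the transverse directions by a homogeneity/Fourier-transform argument on Luna-type slices. The descent through the nilpotent directions is the delicate step, and it is exactly what is carried out in the cited references. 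I should add that for the present purpose one may bypass the full distributional statement: by Harish-Chandra's regularity theorem (\cite{HC}) $\chi_{\pi}$ is represented by a locally integrable conjugation-invariant function, which is then automatically transpose-invariant because $g$ and $g^{T}$ are always conjugate, and this already suffices to run the argument above.
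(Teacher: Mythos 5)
The paper does not prove this statement at all: it is quoted as a known result, with the proof deferred to \cite{GK} and \cite{BZ1} (\S 7.3). Your derivation is correct and is essentially the classical one found in those references: the conjugation by $\pi\left(s_{n}\right)$ indeed shows the specific matrix $s_{n}$ is irrelevant for the isomorphism class, so the statement reduces to $\pi^{\sigma}\cong\widetilde{\pi}$ for $\sigma\left(g\right)=g^{-T}$; the trace distribution $\chi_{\pi}$ is well defined by admissibility; the identities $\chi_{\pi^{\sigma}}\left(f\right)=\chi_{\pi}\left(\check f\circ\tau\right)$ and $\chi_{\pi}\left(\check f\right)=\chi_{\widetilde{\pi}}\left(f\right)$ are right (transpose and inversion preserve Haar measure since $\GL[F]{n}$ is unimodular); and linear independence of characters of irreducible admissible representations closes the argument. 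The one caveat is that the genuinely hard ingredient — that every conjugation-invariant distribution on $\GL[F]{n}$ is transposition-invariant — is only sketched (Harish-Chandra descent, localization, the fact that $g$ and $g^{T}$ are always conjugate, and a Fourier/homogeneity argument for the nilpotent directions); that sketch is consistent with how the cited sources proceed, but it is the actual content of the theorem, so your write-up should be viewed as reducing the statement to the cited invariance theorem rather than reproving it. Your alternative shortcut via Harish-Chandra's regularity theorem (local integrability of $\Theta_{\pi}$ plus conjugacy of $g$ and $g^{T}$ on the regular set) is also valid, though it replaces one deep cited theorem by another; either way the deduction of $\pi^{s}\cong\widetilde{\pi}$ from the invariance statement is sound.
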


\subsection{Group Actions and Distribution Theory}
\begin{dfn} 
Let $G$ be an $l$-group acting on an $l$-space $X$. We say that $X$ is multiplicity-free if for any admissible irreducible representation $\pi$ of $G$ we have 
\[  
	\dim\Hom[G]{\Sch{X}}{\pi} \le 1
\]
We say that is weakly multiplicity-free if for any admissible irreducible representation $\pi$ of $G$ we have 
\[  
	\dim\Hom[G]{\Sch{X}}{\pi}\cdot 
	\dim\Hom[G]{\Sch{X}}{\widetilde{\pi}}
	\le 1
\]
\end{dfn}

Gelfand and Kazhdan gave a criterion for a homogenous variety $X=G/H$ to be weakly multiplicity-free \cite{GK}. This criterion was generalized to the non-homogenous case in \cite{AG3} (Theorem 2.4.1):
\begin{criterion}[Gelfand-Kazhdan]
\label{GK}
Let $G$ be an $l$-group acting on an $l$-space $X$. Let $\tht$ be an involution of $G$ and $X^{\tht}$ be the space $X$ with $G$-action twisted by $\tht$. Let $\al:X\to X^{\tht}$ be a $G$-isomorphism satisfying $\alpha\circ\alpha=\id$. Assume that $\Dist{X\times X}^G \subseteq \Dist{X\times X}^{\swap\circ\Del\al}$. Then X is weakly multiplicity-free.
\end{criterion}

Another important criterion porven by Bernstein and Zelevinsky in \cite{BZ1} is the following:
\begin{criterion}[Bernstein-Gelfand-Kazhdan-Zelevinsky]
\label{BGKZ}
Let $G$ be an algebraic group defined over $F$. Let $X$ be a $G$-variety defined over $F$. Let $\cal{F}$ be a $G\left(F\right)$-equivariant sheaf over $X\left(F\right)$. Assume that for any $x\in X\left(F\right)$
\[
\left(
	\cal{F}_x \otimes 
	\Del_{G\left(F\right)}\big|_{G\left(F\right)_x} \otimes 
	\Del_{G\left(F\right)_x}^{-1}
\right) ^ {G\left(F\right)_x} = 0
\]
where $\Del_H$ is the modular character of the group $H$ and $G\left(F\right)_x$ is the stabilizer of $x$ in $G\left(F\right)$. Then 
\[\Dist{X\left(F\right),\cal{F}}^{G\left(F\right)} = 0\]
\end{criterion}

\begin{thm}[Frobenius Descent, \cite{Ber1}-1.5]
\label{frobDesc}
Let $G$ be an $F$-analytic group acting on an $F$-analytic manifold $M$. Let $Z$ be a homogenous $F$-analytic $G$-manifold. Let $\phi: M\to Z$ be a $G$-equivariant map. Let $z\in Z$ be a point, $M_{z}=\phi^{-1}\left(z\right)$ the primage of $z$ in $M$ and $G_{z}$ the stabilizer of $z$ in $G$. Let $\Delta_{G}$ and $\Delta_{G_{z}}$ be the unimodular characters of $G$ and $G_{z}$ respectively. Let $\chi$ be a character on $G$. Let $E$ be a $G$-equivariant vector bundle over $M$. Then there exists a canonical isomorphism
\[
\Fr: \genfun{M_{z}, E\big|_{M_{z}}\otimes \Delta_{G}\big|_{G_{z}}\otimes \Delta_{G_{z}}^{-1} }^{G_z,\chi|_{G_{z}}\cdot \Delta_{G}\big|_{G_{z}} \cdot \Delta_{G_{z}}^{-1}} \xrightarrow{\sim}
\genfun{M,E}^{G,\chi}
\]
That on smooth sections is given by
\[
\Fr\left(f\right)(x) = \chi\left(g_{x}\right)^{-1} f\left(g\act x\right)
\]
For $f\in \Sm{M_{z}, E\big|_{M_{z}}\otimes \Delta_{G}\big|_{G_{z}}\otimes \Delta_{G_{z}}^{-1}}^{G_{z}, \chi\cdot \Delta_{G}\big|_{G_{z}}\cdot \Delta_{G_{z}}^{-1}}$, and where $g_{x}\in G$ is such that $g_{x}\act x\in M_{z}$ for $x\in M$.
\end{thm}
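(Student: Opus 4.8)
This is Bernstein's Frobenius reciprocity for generalized sections, and I would prove it by reducing to a trivial fibration via a local section, defining the descent map on smooth sections by the stated formula, and then passing to generalized sections by duality. First, since $Z$ is a homogeneous $F$-analytic $G$-manifold, $Z\cong G/G_z$ and the orbit map $G\to Z$ admits an $F$-analytic section $\sigma$ over some open neighbourhood $\Omega$ of $z$ with $\sigma(z)=e$; translating by elements of $G$ covers $Z$ by charts of this form. Using $G$-equivariance of $\phi$, each chart gives an $F$-analytic isomorphism $\phi^{-1}(\Omega)\xrightarrow{\ \sim\ }\Omega\times M_z$, $x\mapsto(\phi(x),\,\sigma(\phi(x))^{-1}\act x)$, which carries $\phi$ to the first projection and, since $E$ is $G$-equivariant, identifies $E$ over $\phi^{-1}(\Omega)$ with the pullback of $E|_{M_z}$ along the second projection; in particular it identifies the germ of $M$ along $M_z$, together with its bundle and density data, with the corresponding germ in $\Omega\times M_z$.

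Next I would take the formula in the statement as the definition of $\Fr$ on smooth sections, $\Fr(f)(x)=\chi(g_x)^{-1}\,g_x\act f(g_x\act x)$ for any $g_x\in G$ with $g_x\act x\in M_z$ (with $g_x$ acting on the bundle fibre as required), and verify that it is independent of the choice of $g_x$. Two admissible choices differ by left multiplication by an element $h\in G_z$, which changes the right-hand side by a scalar factor that — after identifying $\Dens{M}|_{M_z}\cong\Dens{M_z}\otimes\Dens{Z}|_z$ and recognising $\Dens{Z}|_z$ as the one-dimensional $G_z$-representation built into the statement — is exactly cancelled by the hypothesised $(G_z,\,\chi|_{G_z}\cdot\Delta_G|_{G_z}\cdot\Delta_{G_z}^{-1})$-equivariance of $f$. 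One then checks that $\Fr(f)$ is smooth and $(G,\chi)$-equivariant, that restriction along $M_z\hookrightarrow M$ inverts $\Fr$ (take $g_x=e$), and that $\supp[\Fr(f)]=\phi^{-1}(\supp[f])$, so $\Fr$ is a support-compatible bijection between the corresponding spaces of smooth sections, in particular between the compactly supported ones.

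Finally, to pass to generalized sections I would realise $\Fr$ as the transpose of a fibre-integration map: using $\genfun{M,E}=\Sch{M,E^{*}\otimes\Dens{M}}^{*}$, the map dual to $\Fr$ is integration of $E^{*}$-valued measures along the fibres of $\phi$, which on each chart $\phi^{-1}(\Omega)\cong\Omega\times M_z$ is integration over $\Omega$ and exists and is surjective by Harish-Chandra's submersion principle (\cref{HCS}); the local maps patch — the overlap discrepancy being absorbed by the same modular-character twist — to a global surjective continuous linear map, whose transpose is a continuous injection of $\genfun{M_z,\,E|_{M_z}\otimes\Delta_G|_{G_z}\otimes\Delta_{G_z}^{-1}}$ into $\genfun{M,E}$ extending the smooth-level $\Fr$. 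It then remains to see that this injection, restricted to the equivariant subspaces, surjects onto $\genfun{M,E}^{G,\chi}$: this follows by unwinding $G$-equivariance on a chart, which forces a $(G,\chi)$-equivariant section on $\phi^{-1}(\Omega)$ to be pulled back — up to the smooth density in the $\Omega$-direction dictated by equivariance — from its restriction to $M_z$, that restriction being $G_z$-equivariant with the stated character. The main obstacle throughout is the density / modular-character bookkeeping of the second paragraph: pinning down that the correct correction is $\Delta_G|_{G_z}\otimes\Delta_{G_z}^{-1}$ rather than its inverse, and keeping the left/right conventions for modular characters and for the $G$-action on density bundles consistent; the existence of local sections and \cref{HCS} are standard inputs, and the gluing over charts is routine once the fibrewise statement is established.
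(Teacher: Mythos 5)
Note first that the paper contains no proof of \cref{frobDesc} at all: it is imported as a black box from Bernstein (\cite{Ber1}, 1.5), so there is no internal argument to compare yours with; I can only judge the sketch on its own terms. Its skeleton --- local $F$-analytic sections of $G\to G/G_{z}$ trivializing $\phi$ near $z$, the stated formula defining $\Fr$ on smooth sections with $G_{z}$-independence checked against the equivariance character, and a duality argument to pass to generalized sections --- is the standard route and is not wrongheaded.

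Two points, however, are exactly where the content of the theorem sits, and in your sketch they are asserted rather than argued. First, a slip: the transpose of $\Fr$ is not integration along the fibres of $\phi$ (in the chart $\phi^{-1}\left(\Omega\right)\cong\Omega\times M_{z}$ those fibres are the copies $\left\{\omega\right\}\times M_{z}$) but a $\chi$-twisted integration in the base direction, i.e.\ over $\Omega\subseteq Z$, as your own chart-level description says; globally this is cleanest not by patching charts but via the isomorphism of $\Sch{M,E^{*}\otimes\Dens{M}}$ with the compact induction $\operatorname{ind}_{G_{z}}^{G}$ of the corresponding Schwartz space on $M_{z}$, after which the theorem is Frobenius reciprocity for the dual of $\operatorname{ind}$, and the factor $\Delta_{G}\big|_{G_{z}}\Delta_{G_{z}}^{-1}$ is forced by the $\operatorname{ind}$--$\operatorname{Ind}$ duality. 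Second, and more seriously, your surjectivity step appeals to ``its restriction to $M_{z}$'' of a $\left(G,\chi\right)$-equivariant generalized section; restriction of generalized sections to a submanifold is not defined in general (in this paper it requires the wave-front condition of \cref{rest-crit}), so this is precisely what must be proved, not assumed. The standard repair is to pull $\xi$ back along the submersive action map $G\times M_{z}\to M$ (always legitimate for a submersion), use left $\left(G,\chi\right)$-equivariance to factor off a Haar measure on the $G$-factor, and read off a generalized section on $M_{z}$; comparing $\Dens{G\times M_{z}}$ with the pullback of $\Dens{M}$ --- the fibres of this map are $G_{z}$-torsors --- is exactly where $\Delta_{G}\big|_{G_{z}}\otimes\Delta_{G_{z}}^{-1}$ and the twisted character on $G_{z}$ enter. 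Since you explicitly defer this modular bookkeeping, the two central verifications remain open in your proposal, although the route you outline can be completed along the lines just indicated.
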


\begin{thm}[Aizenbud, \cite{Aiz1}-4.15]
\label{action-WF}
Let $G$ be an algebraic group over $F$ acting on a variety $X$ over $F$. For $x\in X\left (F \right )$ denote the action map 
$ a_{x}: G\left (F \right ) \to X \left (F \right )$
given by 
$ a_{x} \left ( g \right ) = g \act x $.
Write its differential at $e$ as 
$ da_{x} : \frak{g} \left ( F \right ) \to T_{x}X\left ( F \right )$.
Let $\xi\in\Dist{X\left(F\right)}^{G\left(F\right)}$.
Then 
$
	\WF[x]{\xi} 
	\subseteq \left ( 
		da_{x} \left (
			\frak{g} \left (F \right )
		\right )
	\right )^{\perp}
$.
\end{thm}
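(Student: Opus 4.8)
Since the wave front set is a local notion and the claim only concerns $\WF[x]{\xi}$, I would begin by passing to an $F$-analytic chart around $x$, so that $X(F)$ may be assumed to be an open subset of a vector space and $T^{*}_{x}X(F)$ together with its subspaces become concrete objects. The idea I want to exploit is that $G(F)$-invariance of $\xi$ is the same as the identity
\[
a^{*}\xi \;=\; \pr^{*}\xi \qquad\text{on } G(F)\times X(F),
\]
where $a(g,y)=g\act y$ is the action map and $\pr\colon G(F)\times X(F)\to X(F)$ is the projection; both pullbacks make sense because $a$ and $\pr$ are submersions (for fixed $g$ the map $y\mapsto g\act y$ is a diffeomorphism, so $da$ is everywhere surjective, whence $S_{a}$ is the zero section). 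I would establish the displayed identity by pairing both sides against functions of the form $\rho\boxtimes\mu$ with $\rho\in\Sch{G(F)}$ and $\mu\in\SmM[c]{X(F)}$ and reducing, via \cref{HCS}, to the definition of invariance together with a change of variables in the Haar integral; the density bundles that must be carried along to make the pullbacks literally defined (one regards $\xi$ as a generalized section of a line bundle) do not affect wave front computations.

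Granting this, the rest is a short diagram chase. On the one hand $\pr^{*}\xi$ is constant in the $G(F)$-direction — an external product of a Haar measure on $G(F)$ with $\xi$ — and straight from the Fourier-analytic definition of $\WF{}$ one obtains the reverse pullback inclusion $\pr^{*}\bigl(\WF{\xi}\bigr)\subseteq\WF{\pr^{*}\xi}$: trivially extending a distribution in extra variables cannot shrink its wave front set. On the other hand $a$ is a submersion, so the pullback theorem of \cite{wf} applies with $\Ga=\WF{\xi}$ and gives $\WF{a^{*}\xi}\subseteq a^{*}\bigl(\WF{\xi}\bigr)$. Since $a^{*}\xi=\pr^{*}\xi$ is one and the same object, $\WF{a^{*}\xi}=\WF{\pr^{*}\xi}$, and chaining the inclusions yields $\pr^{*}\bigl(\WF{\xi}\bigr)\subseteq a^{*}\bigl(\WF{\xi}\bigr)$.

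It then remains to read this off at the point $(e,x)$. The differential $d_{(e,x)}a\colon\mathfrak{g}(F)\oplus T_{x}X(F)\to T_{x}X(F)$ is $(v,u)\mapsto da_{x}(v)+u$, so its transpose sends $w\in T^{*}_{x}X(F)$ to $\bigl((da_{x})^{*}w,\;w\bigr)$, whereas the transpose of $d_{(e,x)}\pr$ sends $w$ to $(0,w)$. Hence for any $(x,w)\in\WF{\xi}$ the covector $\bigl((e,x),(0,w)\bigr)$ lies in $\pr^{*}\bigl(\WF{\xi}\bigr)$, therefore in $a^{*}\bigl(\WF{\xi}\bigr)$, so there is $w'$ with $(x,w')\in\WF{\xi}$ and $\bigl((da_{x})^{*}w',\;w'\bigr)=(0,w)$; the second coordinate forces $w'=w$ and the first gives $(da_{x})^{*}w=0$, i.e.\ $\langle w,\,da_{x}(v)\rangle=0$ for every $v\in\mathfrak{g}(F)$. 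That is precisely $w\in\bigl(da_{x}(\mathfrak{g}(F))\bigr)^{\perp}$, which proves $\WF[x]{\xi}\subseteq\bigl(da_{x}(\mathfrak{g}(F))\bigr)^{\perp}$.

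The concluding linear algebra is trivial; the step I expect to be the main obstacle is the honest setup of the two inputs — the identity $a^{*}\xi=\pr^{*}\xi$, which presupposes that pointwise $G(F)$-invariance upgrades to joint regularity in the group variable and needs careful bookkeeping of the density bundles on which the pullbacks of \cite{wf} act, and the reverse inclusion $\pr^{*}\bigl(\WF{\xi}\bigr)\subseteq\WF{\pr^{*}\xi}$, which is elementary but has to be extracted from the definition of the wave front set, since only the forward pullback inclusion is recorded in the preliminaries above. A more classical route — convolving $\xi$ along the action with $\rho\in\Sch{G(F)}$ of total Haar mass $1$ supported near $e$, using that invariance forces $\xi=\rho\ast\xi$ while $\rho\ast\xi$ is smoother in the orbit directions, and then shrinking $\supp[\rho]$ toward $e$ — avoids the reverse inclusion but requires the analogous wave front estimate for pushforward along $a$, so the pullback formulation is the most economical.
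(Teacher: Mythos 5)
This theorem is imported by the thesis from Aizenbud (\cite{Aiz1}, 4.15) and is stated without proof, so there is no internal argument to compare yours against. Judged on its own, your proposal is correct and is essentially the standard argument for this kind of statement: encode invariance as the identity $a^{*}\xi=\pr^{*}\xi$ on $G\left(F\right)\times X\left(F\right)$, bound $\WF{a^{*}\xi}$ from above by the pullback theorem of \cite{wf} (legitimate since $a$ is a submersion, so $S_{a}$ meets only the zero section, and $\WF{\xi}$ is closed), bound $\WF{\pr^{*}\xi}$ from below, and read the conclusion off at $\left(e,x\right)$ via $d_{\left(e,x\right)}a\left(v,u\right)=da_{x}\left(v\right)+u$, whose transpose sends $w$ to $\left(\left(da_{x}\right)^{*}w,w\right)$ while $d^{*}_{\left(e,x\right)}\pr$ sends $w$ to $\left(0,w\right)$; your final linear algebra forcing $w'=w$ and $\left(da_{x}\right)^{*}w=0$ is right.

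The two inputs you flag do need to be carried out, and both go through in the non-Archimedean setting. For the identity $a^{*}\xi=\pr^{*}\xi$, note that the two sides are a priori sections of different line bundles ($a^{*}$ versus $\pr^{*}$ of the density twist that makes $\xi$ a generalized section); they are identified using the action map, at the cost of a smooth nonvanishing factor which is irrelevant for wave front sets, and the identity itself is immediate by pairing against $\rho\boxtimes\mu$ with \cref{HCS}, since over $l$-spaces $\Sch{G\left(F\right)\times X\left(F\right)}$ is the algebraic tensor product $\Sch{G\left(F\right)}\otimes\Sch{X\left(F\right)}$, so no density-of-test-functions argument is even needed. One should also record that the extension of pullback from \cite{wf} coincides with the dual-of-pushforward pullback used in that identity: both are continuous extensions of pullback of smooth functions into the (Hausdorff) weak topology and smooth sections are dense, so they agree. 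For the lower bound $\pr^{*}\left(\WF{\xi}\right)\subseteq\WF{\pr^{*}\xi}$, which indeed is not among the quoted preliminaries, the check is elementary here: reduce to a product cutoff $\rho_{1}\boxtimes\rho_{2}$ with $\int\rho_{1}\neq 0$ (replacing an arbitrary cutoff by a product one only convolves the Fourier transform with a compactly supported smooth function, harmless in the $p$-adic case), use $\fourier[\left(\rho_{1}\boxtimes\rho_{2}\right)\pr^{*}\xi]=\fourier[\rho_{1}]\boxtimes\fourier[\rho_{2}\xi]$, and restrict the asymptotic-vanishing condition along $\left(0,w\right)$ to the slice where the first dual variable is $0$ to obtain asymptotic vanishing of $\fourier[\rho_{2}\xi]$ along $w$. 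With these two lemmas written out, your proof is complete and, as far as I can tell, follows the same route as the cited source.
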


\subsection{The Wonderful Compactification of $PGL_{n}\left(F\right)$}
\label{secWondComp}
In \cite{onTheWondComp} we are given a construction of the wonderful compactification of a semisimple group $G$ of adjoint type:
Let $\pi: \widetilde{G}\to G$ be the algebraic universal cover of $G$, $\al_{1},\dots,\al_{l}$ be the simple roots $\widetilde{G}$, with relation to some torus and Borel subgroups $\widetilde{T}\subseteq \widetilde{B}\subseteq \widetilde{G}$. Fix an irreducible representation $V$ of $\widetilde{G}$ with regular highest weight $\la$ and a basis $v_{0},\ldots,v_{n}$ of $\widetilde{T}$-weight vectors of $V$ with the following properties:
\begin{enumerate}
\item $v_{0}$ has weight $\la$
\item For $i=0,\ldots ,l$: $v_{i}$ has weight $\lambda-\al_{i}$
\item Let $\lambda_{i}$ be the weight of $v_{i}$. Then if $\lambda_{i} < \lambda_{j}$ then $i > j$
\end{enumerate}
$G\times G$ acts on $\bb{P}\left(\End{V}\right)$. We have a $G\times G$-embedding $\psi:G\hookrightarrow \bb{P}\left(\End{V}\right)$ given by $\psi\left(g\right) = \left[g\right]$. Then we define the wonderful compactification of $G$ to be $\overline{G}:=\overline{\psi\left(G\right)}$ in $\bb{P}\left(\End{V}\right)$.

For our example let $G=PGL_{n}$, $\widetilde{G}=SL_{n}$, $\widetilde{B}$ - the subgroup of upper-triangular matrices and $\widetilde{T}$ - the subgroup of diagonal matrices. The simple roots are $\left\{\alpha_{1},\dots,\alpha_{n-1}\right\}$ where $\alpha_i=\epsilon_{n}-\epsilon_{i}$ and $\epsilon_{i}:\widetilde{T}\to \bb{G}_{m}$ is given by 
$\epsilon_{i}\left(\begin{array}{cccc}
x_{1} &		  &		   &		\\
	  & x_{2} &		   &		\\
	  &		  & \ddots &		\\
	  &		  &		   & x_{n}
\end{array}\right) = x_{i}$.
Let $V = F^{n}$ be the standard representation of $\widetilde{G}$. It has a highest weight $\lambda = \epsilon_{n}$ and a basis of weights: $v_{0}= e_{n}$ with weight $\lambda=\epsilon_{n}$, $v_{i}=e_{i}$ with weight $\lambda_{i}=\epsilon_{i}=\lambda-\alpha_{i}$.
So $V$ is a representation with the required properties. $\End{V}=\gl{n}$ and $G\times G$ acts on $\Pgl{n}$ by
\[\left(\left[g\right],\left[h\right]\right)\act \left[x\right] = 
\left[gxh^{-1}\right]\]
As $G\subseteq \Pgl{n}$ is dense we find that $\overline{G} = \Pgl{n}$.

\section{Notations}
We prove that the wonderful compactification of $\PGL{2}$ is multiplicity free over $F$. Let $G=\PGL[F]{2}$ and $X=\Pgl[F]{2}$ - its wonderful compactification equipped with the $\GG$ action given by matrix multiplication on both sides.
For most of this thesis we will denote elements of $G$ as $g$ (opposed to $\left[g\right]$) and elements of $X$ as $x$.

We will denote by $B\subseteq G$ the Borel subgroup of upper-triangular matrices and by $T\subseteq B$ the maximal torus of diagonal matrices. We will also denote by 
$Q:=\left\{
	\left[\begin{array}{cc}
		0 & *	\\
		* & 0
\end{array}\right]\right\}\subseteq G$, 
and notice that $Q^2 = T$.

We will name the following matrices:
\[
\begin{aligned}
I       & = & \left[\begin{array}{cc} 1&0\\0&1 \end{array}\right]	&\ \ \ \ &
E_{1,1} & = & \left[\begin{array}{cc} 1&0\\0&0 \end{array}\right]	&\ \ \ \ &
E_{1,2} & = & \left[\begin{array}{cc} 0&1\\0&0 \end{array}\right]	\\
E_{2,1} & = & \left[\begin{array}{cc} 0&0\\1&0 \end{array}\right]	&\ \ \ \ &
E_{2,2} & = & \left[\begin{array}{cc} 0&0\\0&1 \end{array}\right]	&\ \ \ \ &
S       & = & \left[\begin{array}{cc} 0&1\\1&0 \end{array}\right]
\end{aligned}\]

\section{Reduction to Weakly Multiplicity-Free}
\label{weakIFFMulFree}
We wish to prove \cref{mainThm}. The Gelfand-Kazhdan criterion (\cref{GK}) is a powerful tool for proving that varieties are weakly multiplicity free. In the following sections we will use the Gelfand-Kazhdan criterion in order to prove \cref{almostMainThm}.

This section is devoted to proving that both theorems are equivalent. We will prove the more general statement:
\begin{thm}
\label{AandBareEquivalent}
$\Pgl[F]{n}$ is multiplicity-free as a $\PGL[F]{n}\times\PGL[F]{n}$-variety if and only if it is weakly multiplicity-free.
\end{thm}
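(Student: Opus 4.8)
One implication is trivial: if $\Pgl[F]{n}$ is multiplicity-free then, since the contragredient of an irreducible admissible representation is again irreducible admissible, both factors in the weak inequality are $\le 1$, so $\Pgl[F]{n}$ is weakly multiplicity-free. For the converse put $G = \PGL[F]{n}$ and $X = \Pgl[F]{n}$ with $G\times G$ acting by left and right translation, and let $\pi$ be an irreducible admissible representation of $\left(G\times G\right)\left(F\right) = G\left(F\right)\times G\left(F\right)$ (all $\Hom$-spaces below are over this group). The plan is to establish
\[\dim\Hom{\Sch{X\left(F\right)}}{\pi} = \dim\Hom{\Sch{X\left(F\right)}}{\widetilde{\pi}};\]
granting this, the weakly-multiplicity-free hypothesis forces $\left(\dim\Hom{\Sch{X\left(F\right)}}{\pi}\right)^{2}\le 1$, hence $\dim\Hom{\Sch{X\left(F\right)}}{\pi}\le 1$, i.e.\ $X$ is multiplicity-free.

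To obtain the displayed equality I would first use the standard factorization of irreducible admissible representations of a direct product of $l$-groups to write $\pi\cong\pi_{1}\boxtimes\pi_{2}$ with $\pi_{1},\pi_{2}$ irreducible admissible representations of $G\left(F\right)$, so that $\widetilde{\pi}\cong\widetilde{\pi_{1}}\boxtimes\widetilde{\pi_{2}}$. The geometric ingredient is an involution $\tau\colon X\to X$ intertwining the action of $\left(\left[g\right],\left[h\right]\right)$ with that of $\left(\left[g^{-T}\right],\left[h^{-T}\right]\right)$; equivalently, if $\delta$ denotes the automorphism $\left[g\right]\mapsto\left[g^{-T}\right]$ of $G$, then $\tau$ is a $G\times G$-isomorphism $X\xrightarrow{\sim}X^{\delta\times\delta}$. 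For $n = 2$, in which case $X = \Pgl[F]{2}$ is the projectivization of $\gl[F]{2}$, one may take $\tau$ induced by the linear involution $\left(\begin{array}{cc} a & b \\ c & d\end{array}\right)\mapsto\left(\begin{array}{cc} d & -c \\ -b & a\end{array}\right)$ of $\gl{2}$; on $\GL{2}$ this is $x\mapsto\adj[x]^{T} = \det\left(x\right)\cdot x^{-T}$, so on $\PGL{2}$ it restricts to $\delta$, and equivariance up to $\delta\times\delta$ is a direct computation. For general $n$ the adjugate formula degenerates on matrices of rank $\le n-2$, which do lie in $\Pgl[F]{n}$, and one instead obtains $\tau$ from the transpose map $\left[x\right]\mapsto\left[x^{T}\right]$ on $\mathbb{P}\left(\End{V}\right)$ composed with the canonical $G\times G$-isomorphism between the wonderful compactifications built from $V$ and from $V^{*}$ — in other words $\tau$ is the extension to $X$ of the diagram automorphism $\delta$ of $\PGL{n}$. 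In all cases $\tau$ restricts to $\delta$ on the dense orbit $G\subseteq X$, which forces $\tau\circ\tau = \id$, since a $G\times G$-equivariant self-map of $X$ that is the identity on the open orbit must be the identity.

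Pullback along $\tau$ then gives a $G\times G$-equivariant isomorphism $\Sch{X\left(F\right)}\xrightarrow{\sim}\Sch{X\left(F\right)}^{\delta\times\delta}$, so $\dim\Hom{\Sch{X\left(F\right)}}{\pi} = \dim\Hom{\Sch{X\left(F\right)}}{\pi_{1}^{\delta}\boxtimes\pi_{2}^{\delta}}$, where $\pi_{i}^{\delta} = \pi_{i}\circ\delta$ (the direction of the twist is irrelevant since $\delta\times\delta$ is an involution). Now $\delta$ and the automorphism $s$ of \cref{repOfGLn} differ by the inner automorphism $\operatorname{Ad}\left(s_{n}\right)$, so $\pi_{i}^{\delta}\cong\pi_{i}^{s}$, and $\pi_{i}^{s}\cong\widetilde{\pi_{i}}$ by \cref{repOfGLn}; since $\pi_{i},\pi_{i}^{s},\widetilde{\pi_{i}}$ all have trivial central character, this is an isomorphism of $\PGL[F]{n}$-representations. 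Hence $\dim\Hom{\Sch{X\left(F\right)}}{\pi} = \dim\Hom{\Sch{X\left(F\right)}}{\widetilde{\pi_{1}}\boxtimes\widetilde{\pi_{2}}} = \dim\Hom{\Sch{X\left(F\right)}}{\widetilde{\pi}}$, which is the required equality, completing the converse.

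I expect the real work to be Step 2, the construction of $\tau$. For $n = 2$ — the only case actually needed in this thesis — it is the elementary linear involution above and the sole task is checking $G\times G$-equivariance up to $\delta\times\delta$; but for general $n$ the naive adjugate formula fails on the boundary of $\Pgl[F]{n}$, and one must genuinely appeal to the extension of the diagram automorphism of $\PGL{n}$ to its wonderful compactification. The remaining steps are formal: the factorization of irreducibles over a product, the Gelfand--Kazhdan theorem \cref{repOfGLn} for $\GL{n}$, and the one-line numerical argument using the weakly-multiplicity-free inequality.
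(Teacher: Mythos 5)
Your proposal is correct and is essentially the paper's own argument: the thesis proves the key lemma by precomposing with the map $[x]\mapsto\left[s_{n}\cdot\adj[x^{T}]\cdot s_{n}^{-1}\right]$ on $\Pgl[F]{n}$ (your $\tau$ up to the inner twist by $s_{n}$), invokes \cref{repOfGLn} to identify the $\pi$- and $\widetilde{\pi}$-multiplicity spaces, and then squares the weak multiplicity-free inequality exactly as you do. Your two refinements — factoring $\pi\cong\pi_{1}\boxtimes\pi_{2}$ over the product group $\PGL[F]{n}\times\PGL[F]{n}$, and noting that the adjugate formula degenerates on matrices of rank $\le n-2$ when $n\ge 3$ — are points the paper glosses over (its lemma is phrased for a single copy of $\PGL[F]{n}$ and uses the adjugate formula for all $n$), but they do not change the substance, and for the case $n=2$ actually used in the thesis both write-ups are complete.
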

To prove this theorem, remember the isomorphism $s$ of $\GL[F]{n}$ from \cref{repOfGLn}. It is easy to see that for $\lambda\in \Gm[F]$ this isomorphism satisfies $\left(\lambda \cdot g\right)^{s}=\lambda \cdot g^{s}$. Thus we can think of this isomorphism as an isomorphism of $\PGL[F]{n}$ and we have a commutative diagram
\[\xymatrix{
	\GL[F]{n}
		\ar[r]^{s}
		\ar[d]^{\pr}
		&
	\GL[F]{n}
		\ar[d]^{\pr}
		\\
	\PGL[F]{n}
		\ar[r]^{s}
		&
	\PGL[F]{n}
}\]
\begin{lem}
Let $\pi$ be an irreducible admissible representation of $\PGL[F]{n}$. Then there exists a $\PGL[F]{n}$-equivariant isomorphism
\[
\Hom{\Sch{\Pgl[F]{n}}}{\pi}^{s} \xrightarrow{\sim} \Hom{\Sch{\Pgl[F]{n}}}{\tilde{\pi}}
\]
Where for a represenation $V$ of $\PGL[F]{n}$ the notation $V^{s}$ stands for the representation with a $\PGL[F]{n}$-action twisted by $s$.
\end{lem}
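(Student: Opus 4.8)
The plan is to realize the isomorphism $\pi^{s}\cong\widetilde{\pi}$ of \cref{repOfGLn} geometrically, as pullback along an involution of $X:=\Pgl[F]{n}$, and then transport it to Schwartz functions. For the geometric step: since $s$ is defined by $g^{s}=s_{n}g^{-T}s_{n}^{-1}$, the \emph{linear} automorphism $x\mapsto s_{n}x^{T}s_{n}^{-1}$ of $\gl[F]{n}$ induces a well defined $F$-analytic map $\alpha:X\to X$, $\alpha\left(\left[x\right]\right)=\left[s_{n}x^{T}s_{n}^{-1}\right]$; using $s_{n}^{T}=\left(-1\right)^{n+1}s_{n}$ one checks $\alpha\circ\alpha=\id$, and one also checks $s^{2}=\id$ on $\PGL[F]{n}$. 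From $\left(gxh^{-1}\right)^{T}=h^{-T}x^{T}g^{T}$ together with the identities $s_{n}g^{-T}s_{n}^{-1}=g^{s}$ and $s_{n}g^{T}s_{n}^{-1}=\left(g^{s}\right)^{-1}$, a short computation gives
\[
\alpha\left(\left(\left[g\right],\left[h\right]\right)\act\left[x\right]\right)=\left(\left[h\right]^{s},\left[g\right]^{s}\right)\act\alpha\left(\left[x\right]\right),
\]
so $\alpha$ is an isomorphism of $\PGL[F]{n}\times\PGL[F]{n}$-varieties $X\xrightarrow{\sim}X^{\theta}$, where $\theta\left(\left[g\right],\left[h\right]\right)=\left(\left[h\right]^{s},\left[g\right]^{s}\right)$ is an involution of $\PGL[F]{n}\times\PGL[F]{n}$.

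Next I would pull back to functions. Being an $F$-analytic automorphism, $\alpha$ induces an isomorphism $\alpha^{*}:\Sch{X}\xrightarrow{\sim}\Sch{X}$ intertwining the $\PGL[F]{n}\times\PGL[F]{n}$-action with its $\theta$-twist, and $\left(\alpha^{*}\right)^{2}=\id$. Hence precomposition with $\alpha^{*}$ sends a morphism $\phi\in\Hom{\Sch{X}}{\sigma}$, equivariant for one of the two factors, to $\phi\circ\alpha^{*}$, which is equivariant for the other factor with respect to the twist $\sigma^{s}$ — the swap in $\theta$ interchanging the two factors and its $s$-part twisting $\sigma$; following the residual $\PGL[F]{n}$-action through this, one obtains a $\PGL[F]{n}$-equivariant isomorphism $\Hom{\Sch{X}}{\sigma}^{s}\xrightarrow{\sim}\Hom{\Sch{X}}{\sigma^{s}}$ for every smooth representation $\sigma$. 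Taking $\sigma=\pi$ and then applying an isomorphism $\pi^{s}\cong\widetilde{\pi}$ supplied by \cref{repOfGLn} (which is $\PGL[F]{n}$-equivariant, hence respects the residual action) yields the desired isomorphism $\Hom{\Sch{X}}{\pi}^{s}\xrightarrow{\sim}\Hom{\Sch{X}}{\widetilde{\pi}}$; it is an isomorphism because $\alpha^{*}$, being an involution, is invertible.

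I expect the only delicate point to be the bookkeeping in the second step: keeping straight which of the two $\PGL[F]{n}$-factors each Hom-space is equivariant under, how $\theta$ swaps them, and how the $s$-twist commutes with passage to the contragredient. Both substantive ingredients are already in hand: \cref{repOfGLn} gives $\pi^{s}\cong\widetilde{\pi}$, and the linearity of transpose on $\gl[F]{n}$ is exactly what makes $\alpha$ defined on all of $X$ and not merely on the dense open $\PGL[F]{n}\subseteq X$. (Indeed $s$ itself, viewed as a map $\PGL[F]{n}\to\PGL[F]{n}$, does \emph{not} extend to $X$ when $n\geq 3$, since it involves $g\mapsto g^{-1}$; precomposing with the inversion $g\mapsto g^{-1}$ — which is the origin of the factor swap in $\theta$ — replaces $g^{-T}$ by $g^{T}$ and produces the linear, everywhere-defined formula $g\mapsto s_{n}g^{T}s_{n}^{-1}$ that is $\alpha$ restricted to $\PGL[F]{n}$.) Finally one should check that the isomorphism of \cref{repOfGLn} can be normalized so that the relevant square of $\PGL[F]{n}$-actions commutes on the nose rather than up to a scalar; since everything here is a representation of $\PGL[F]{n}$ and not of $\GL[F]{n}$, this is automatic.
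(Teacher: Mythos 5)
Your geometric step is correct: $\alpha\left(\left[x\right]\right)=\left[s_{n}x^{T}s_{n}^{-1}\right]$ is a globally defined involution of $\Pgl[F]{n}$ satisfying $\alpha\left(\left(g,h\right)\act x\right)=\left(h^{s},g^{s}\right)\act\alpha\left(x\right)$. But precisely because of the factor swap in this identity, the second step does not give the lemma, and the bookkeeping you defer is where the argument breaks. On the open orbit $\PGL[F]{n}$ your $\alpha$ restricts to $g\mapsto\left(g^{-1}\right)^{s}$, not to $s$, so $\alpha^{*}$ intertwines the left-multiplication action of one $\PGL[F]{n}$-factor with the ($s$-twisted) right-multiplication action of the other. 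Consequently $T\mapsto T\circ\alpha^{*}$ identifies the Hom-space formed with respect to one factor with the Hom-space formed with respect to the other; in the $\PGL[F]{n}\times\PGL[F]{n}$-language it identifies $\Hom[{\PGL[F]{n}\times\PGL[F]{n}}]{\Sch{\Pgl[F]{n}}}{\pi_{1}\boxtimes\pi_{2}}$ with $\Hom[{\PGL[F]{n}\times\PGL[F]{n}}]{\Sch{\Pgl[F]{n}}}{\widetilde{\pi_{2}}\boxtimes\widetilde{\pi_{1}}}$, i.e.\ with the multiplicity of the contragredient of the \emph{swapped} representation. Your intermediate claim $\Hom{\Sch{\Pgl[F]{n}}}{\sigma}^{s}\xrightarrow{\sim}\Hom{\Sch{\Pgl[F]{n}}}{\sigma^{s}}$, with the same residual $\PGL[F]{n}$-action on both sides, is exactly what your map does not provide: the residual group of the source is one factor and that of the target is the other, and the swap cannot be cancelled by applying $\alpha$ again, since $\alpha\circ\alpha=\id$ cancels the $s$-twist along with it. The statement you do obtain (multiplicity of $\pi_{1}\boxtimes\pi_{2}$ equals multiplicity of $\widetilde{\pi_{2}}\boxtimes\widetilde{\pi_{1}}$) is not sufficient for the deduction in \cref{AandBareEquivalent}, which needs the multiplicities of $\pi$ and of $\widetilde{\pi}$ to coincide for the \emph{same} twist of the factors, unless one separately proves swap-invariance of the multiplicities --- and that again requires a globally defined map on $\Pgl[F]{n}$ inducing the swap, which is the very point at issue.

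The paper removes the swap by extending $s$ itself (rather than $s$ composed with inversion) to all of $\Pgl[F]{n}$: projectively $g^{-T}$ equals $\adj[g^{T}]$, and the map $\phi f\left(x\right)=f\left(s_{n}\cdot\adj[x^{T}]\cdot s_{n}^{-1}\right)$ commutes with each factor separately up to the $s$-twist, so the residual actions match on the nose and the lemma follows from $\pi^{s}\cong\widetilde{\pi}$. Your parenthetical observation is the right one but leads to the wrong remedy: for $n\ge 3$ the adjugate map is indeed undefined on matrices of rank at most $n-2$ (a real issue with the paper's general-$n$ formulation, worth flagging), but for $n=2$ --- the only case needed for the main theorem --- the adjugate is linear, so the paper's map is a well-defined involution of $\Pgl[F]{2}$ (indeed $s$ is inner on $\PGL[F]{2}$, which makes the $n=2$ case of the lemma nearly trivial). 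Replacing the adjugate by the transpose avoids the domain problem only at the price of proving a different, swap-twisted statement.
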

\begin{proof}
$\pi$ is an admissible irreducible representation of $\PGL[F]{n}$ and thus also of $\GL[F]{n}$.
By \cref{repOfGLn}, $\pi^{s}\cong \tilde{\pi}$ and whence it is equivalent to prove that we have a $\PGL[F]{n}$-isomorphism:
\[
\phi: \Hom{\Sch{\Pgl[F]{n}}}{\pi}^{s} \xrightarrow{\sim} \Hom{\Sch{\Pgl[F]{n}}}{\pi^{s}}
\]
Let $f\in\Hom{\Sch{\Pgl[F]{n}}}{\pi}^{s}$. Define $\phi f$ in the following way:
\[
\phi f \left(x\right) = f\left(s_{n}\cdot \adj[x^{T}]\cdot s_{n}^{-1}\right)
\]
Where $\adj[x]$ is the adjugate matrix of $x$.
Now:
\[\begin{aligned}
	\left(g\act \phi f\right)\left(x\right) 
	& = \pi^{s}\left(g\right) \phi f\left(g^{-1}\act x\right) 											\\
	& = \pi\left(g^{s}\right) f\left(s_{n} \adj[\left(g^{-1} x\right)^{T}] s_{n}^{-1}\right)			\\
	& = \pi\left(g^{s}\right) f\left(s_{n} g^{T} \adj[x^{T}] s_{n}^{-1}\right)							\\	
	& = \pi\left(g^{s}\right) f\left(s_{n} g^{T} s_{n}^{-1} s_{n} \adj[x^{T}] s_{n}^{-1}\right)			\\
	& = \pi\left(g^{s}\right) f\left(\left(g^{s}\right)^{-1} s_{n}^{-1} \adj[x^{T}] s_{n}^{-1}\right)	\\
	& = \left(g^{s}\act f\right)\left(s_{n}^{-1} \adj(x^{T})s_{n}\right)								\\
	& = \phi\left(g^{s}\act f\right)\left(x\right)
\end{aligned}\]
Whence $\phi$ is $\PGL[F]{n}$-equivariant. It is left to prove that $\phi$ is invertible which is easy, and moreover:
\[\phi^{-1}f\left(x\right) = f\left(s_{n}^{T}\cdot \adj[x^{T}]\cdot s_{n}^{-T}\right)\]
\end{proof}

\begin{proof}[Proof of \cref{AandBareEquivalent}]
Assume that $\Pgl[F]{n}$ is weakly multiplicity-free. Let $\pi$ be an admissible irreducible representation of $\PGL[F]{n}$. By the above lemma
\[
\Hom{\Sch{\Pgl[F]{n}}}{\pi}^{s} \cong \Hom{\Sch{\Pgl[F]{n}}}{\widetilde{\pi}}
\]
as $\PGL[F]{n}$-representations. In particular
\[
\begin{aligned}
\dim \left(\Hom{\Sch{\Pgl[F]{n}}}{\pi}^{s}\right)^{\PGL[F]{n}}
& = \dim \Hom{\Sch{\Pgl[F]{n}}}{\widetilde{\pi}}^{\PGL[F]{n}}	\\
& = \dim \Hom[{\PGL[F]{n}}]{\Sch{\Pgl[F]{n}}}{\widetilde{\pi}}
\end{aligned}
\]
The twisted action does not change the invariants, whence:
\[
\begin{aligned}
\dim \left(\Hom{\Sch{\Pgl[F]{n}}}{\pi}^{s}\right)^{\PGL[F]{n}}
& = \dim \Hom{\Sch{\Pgl[F]{n}}}{{\pi}}^{\PGL[F]{n}}	\\
& = \dim \Hom[{\PGL[F]{n}}]{\Sch{\Pgl[F]{n}}}{{\pi}}
\end{aligned}
\]
Thus we got
\[
\dim \Hom[{\PGL[F]{n}}]{\Sch{\Pgl[F]{n}}}{\widetilde{\pi}}=
\dim \Hom[{\PGL[F]{n}}]{\Sch{\Pgl[F]{n}}}{{\pi}}
\]
Since $\Pgl[F]{n}$ is weakly multiplicity-free
\[
\dim \Hom[{\PGL[F]{n}}]{\Sch{\Pgl[F]{n}}}{{\pi}} \cdot
\dim \Hom[{\PGL[F]{n}}]{\Sch{\Pgl[F]{n}}}{\widetilde{\pi}} \le 1
\]
Or equivalently:
\[\left(\dim \Hom[{\PGL[F]{n}}]{\Sch{\Pgl[F]{n}}}{{\pi}}\right)^{2} \le 1\]
Which implies that:
\[\dim \Hom[{\PGL[F]{n}}]{\Sch{\Pgl[F]{n}}}{{\pi}} \le 1\]
i.e. $\Pgl[F]{n}$ is multiplicity-free.
\end{proof}

\section{Reducing to The Cross}
\label{redToCross}
\subsection{Gelfand-Kazhdan Criterion}
We now want to prove \cref{almostMainThm}, which can be done using the Gelfand-Kazhdan criterion (\cref{GK}).
We look at the identity involution $\theta = id$ on $\GG$, and we let $\alpha:X\to X^{\theta}$ be the identity morphism. Using Gelfand-Kazhdan criterion we wish to prove \cref{C}.
Let $\GGt:=\GG \times \ZZ[2]$, and denote the elements of $\ZZ[2]$ by $\left\{1,\epsilon\right\}$.
Let $\chi:\GGt\to\bb{C}^{\times}$ be the character which is trivial on $\GG$ and satisfies $\chi\left(\epsilon\right)=-1$.

\begin{lem}
\label{tildeIFF}
Let $Y\subseteq X\times X$ be a symmetric $\GG$-variety, i.e. $Y^{\swap} = Y$. $\GGt\acts Y$ by $\epsilon\act y = y^{\swap}$.
Then $\Dist{Y}^{\GG}\subseteq \Dist{Y}^{\swap}$ if and only if $\Dist{Y}^{\GGt,\chi} = 0$.
\end{lem}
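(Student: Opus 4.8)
The statement to prove is \cref{tildeIFF}: for a symmetric $\GG$-subvariety $Y \subseteq \XX$ with $\ZZ[2]$ acting by $\swap$, one has $\Dist{Y}^{\GG} \subseteq \Dist{Y}^{\swap}$ if and only if $\Dist{Y}^{\GGt,\chi} = 0$. This is a purely formal statement about distributions invariant under a group extension, so the plan is to exploit the averaging/decomposition trick for the order-two subgroup $\ZZ[2] = \{1,\epsilon\}$. Since $\GGt = \GG \times \ZZ[2]$ and $\chi$ is trivial on $\GG$, a distribution $\xi \in \Dist{Y}$ lies in $\Dist{Y}^{\GGt,\chi}$ if and only if it is $\GG$-invariant and satisfies $\epsilon \act \xi = \chi(\epsilon)\,\xi = -\xi$, i.e. $\xi^{\swap} = -\xi$. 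Likewise $\xi \in \Dist{Y}^{\swap}$ means $\xi^{\swap} = \xi$.

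The key observation is that the operator $\sigma : \Dist{Y} \to \Dist{Y}$ given by $\sigma(\xi) = \xi^{\swap}$ is an involution ($\sigma^2 = \id$) because $\swap \circ \swap = \id$ on $Y$, and it commutes with the $\GG$-action (here one uses that the $\GG$-action on $\XX$ is diagonal, so $\swap$ is $\GG$-equivariant; this is exactly the hypothesis that $Y$ is a symmetric $\GG$-variety). Hence $\sigma$ restricts to an involution of the subspace $\Dist{Y}^{\GG}$, which therefore decomposes as a direct sum of its $(+1)$- and $(-1)$-eigenspaces:
\[
\Dist{Y}^{\GG} = \Dist{Y}^{\GG,+} \oplus \Dist{Y}^{\GG,-},
\qquad \xi = \tfrac{1}{2}(\xi + \xi^{\swap}) + \tfrac{1}{2}(\xi - \xi^{\swap}).
\]
By the remarks above, the $(+1)$-eigenspace $\Dist{Y}^{\GG,+}$ is exactly $\Dist{Y}^{\GG} \cap \Dist{Y}^{\swap}$, and the $(-1)$-eigenspace $\Dist{Y}^{\GG,-}$ is exactly $\Dist{Y}^{\GGt,\chi}$.

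From this decomposition the equivalence is immediate in both directions. If $\Dist{Y}^{\GGt,\chi} = \Dist{Y}^{\GG,-} = 0$, then $\Dist{Y}^{\GG} = \Dist{Y}^{\GG,+} \subseteq \Dist{Y}^{\swap}$, giving one implication. Conversely, if $\Dist{Y}^{\GG} \subseteq \Dist{Y}^{\swap}$, then every $\GG$-invariant $\xi$ satisfies $\xi^{\swap} = \xi$, so the $(-1)$-eigenspace is zero, i.e. $\Dist{Y}^{\GGt,\chi} = 0$. I do not anticipate a serious obstacle here; the only point requiring a line of care is checking that $\swap$ genuinely commutes with the diagonal $\GG$-action so that $\sigma$ preserves $\Dist{Y}^{\GG}$ — but that is exactly what "symmetric $\GG$-variety" encodes. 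One should also note that the characteristic-zero coefficient field $\bb{C}$ makes the idempotents $\tfrac{1}{2}(1 \pm \sigma)$ legitimate, which is automatic here.
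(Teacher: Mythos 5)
Your proof is correct and is essentially the paper's argument: the paper checks the same two implications directly, using $\xi-\xi^{\swap}\in\Dist{Y}^{\GGt,\chi}$ for one direction and $\Dist{Y}^{\GGt,\chi}\subseteq\Dist{Y}^{\GG}\cap\Dist{Y}^{\swap,-1}$ for the other, which is exactly your $\pm 1$-eigenspace decomposition written out pointwise. Nothing is missing; your extra remarks about $\swap$ commuting with the diagonal action and about the idempotents $\tfrac{1}{2}(1\pm\sigma)$ are fine but not needed beyond what the paper already does.
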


\begin{proof}
Assume that $\Dist{Y}^{\GG} \subseteq \Dist{Y}^{\swap}$. Let $\xi\in\Dist{Y}^{\GGt,\chi}$, in particular $\xi\in \Dist{Y}^{\GG}\subseteq \Dist{Y}^{\swap}$ and $\xi\in\Dist{Y}^{\ZZ[2], \chi} = \Dist{Y}^{\swap,-1}$. Whence $\xi=0$.

On the other hand, assume that $\Dist{Y}^{\GGt, \chi}=0$. Let $\xi\in\Dist{Y}^{\GG}$. Then $\xi-\xi^{\swap}\in \Dist{Y}^{\GGt,\chi}=0$. Therefore $\xi=\xi^{\swap}$, i.e. $\xi\in\Dist{Y}^{\swap}$.
\end{proof}

\subsection{The Geometry of $\XX$}
In this section we will study the geometry of $\XX$ and its decomposition to $\GGt$-orbits.

Denote by 
\[O=\GG\subseteq \XX\]
the open $\GGt$-subvariety. 
Let 
\[X_{1}:=\XX\setminus O=\left \{
	\left (x,y\right )\in\XX
	\,|\,
	x,y \text{ are not both invertible}
\right \}\]

For a point $\left(x,y\right)\in X_1$ denote its orbit by 
\[O_{x,y}=\GGt\act \left(x,y\right)\]
and its stabilizer by 
\[P_{x,y}=\Stab[\GGt]{x,y}\]
We have $O_{x,y}\cong \sfrac{\GGt}{P_{x,y}}$ whence 
\[\dim O_{x,y}=\dim\left(\GGt\right) - \dim P_{x,y}=6-\dim P_{x,y}\]

\begin{claim} 
\label{stabilizers}
These are the the stabilizers and orbit-dimensions of our points of interest:
\begin{enumerate}
	\item $ P_{E_{1,2},E_{1,2}} = B\times B\times \ZZ[2] \Rightarrow
	   \dim O_{E_{1,2},E_{1,2}} = 2 $
	\item $ P_{E_{1,2},E_{1,1}} = B\times \left(
				T\times \left\{1\right\} \cup 
				Q\times\left\{\epsilon\right\}
			\right) \Rightarrow
	   \dim O_{E_{1,2},E_{1,1}} = 3 $
	\item $ P_{E_{1,2},E_{2,2}} =  \left(
				T\times \left\{1\right\} \cup 
			    Q\times\left\{\epsilon\right\}
			\right) \times B \Rightarrow
	   \dim O_{E_{1,2},E_{2,2}} = 3 $
	\item $ P_{E_{1,2},E_{2,1}} = T\times T\times \left\{1\right\}
				\cup Q\times Q\times \left\{\epsilon\right\}
				\Rightarrow
	   \dim O_{E_{1,2},E_{2,1}} = 4 $
	\item $ P_{I      ,E_{1,2}} = \Delta B \Rightarrow
	   \dim O_{I      ,E_{1,2}} = 4 $
	\item $ P_{I      ,E_{1,1}} = \Delta T \Rightarrow
	   \dim O_{I      ,E_{1,1}} = 5 $
\end{enumerate}
\end{claim}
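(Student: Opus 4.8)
The plan is to verify each of the six claims by a direct computation of the stabilizer $P_{x,y} = \Stab[\GGt]{x,y}$, and then to read off the orbit dimension from the formula $\dim O_{x,y} = 6 - \dim P_{x,y}$ already recorded in the text. The key observation making this tractable is that $\GGt = \GG \times \ZZ[2]$, so the stabilizer splits according to the two cosets of $\ZZ[2]$: the ``$1$-part'' $P_{x,y} \cap (\GG \times \{1\})$ consists of pairs $([g],[h])$ with $gxh^{-1} \sim x$ and $gyh^{-1}\sim y$ (equality in $\Pgl{2}$), while the ``$\epsilon$-part'' consists of pairs $([g],[h])$ with $gyh^{-1}\sim x$ and $gxh^{-1}\sim y$ (i.e. $g,h$ swap the two coordinates). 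So for each point I would first compute the $\GG$-stabilizer of the pair, then separately determine whether any element of $\GG$ realizes the swap, and if so identify that coset.

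I would organize the computation around the three representative points of $X_1$ that appear: $E_{1,2}$ (a rank-one matrix whose class in $\Pgl{2}$ sits in the boundary), $E_{1,1}$, $E_{2,2}$, $E_{2,1}$ (the other rank-one boundary matrices), and $I$ (an honest element of $G \subseteq X$). For the rank-one matrices, note $g E_{1,2} h^{-1}$ is again rank one, and writing $g = \left[\begin{smallmatrix} a & b \\ c & d\end{smallmatrix}\right]$, $h = \left[\begin{smallmatrix} a' & b' \\ c' & d'\end{smallmatrix}\right]$ one computes $g E_{1,2} h^{-1}$ explicitly; the condition that it be proportional to $E_{1,2}$ forces $c = 0$ (so $g \in B$) and $c' = 0$ (so $h \in B$), giving the $B$-type factors in claims (1)--(4). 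The extra condition coming from the second coordinate ($E_{1,1}$, $E_{2,2}$, or $E_{2,1}$) then cuts $B$ down to $T$ or $Q$ as appropriate: e.g. for $E_{1,1}$ one needs $g E_{1,1} h^{-1} \sim E_{1,1}$, which with $g,h$ already triangular forces $b' = 0$ too, i.e. $h \in T$; and in the $\epsilon$-coset one needs $g$ and $h$ to swap $E_{1,2} \leftrightarrow E_{1,1}$, which is achieved precisely by the antidiagonal elements, giving the $Q \times \{\epsilon\}$ piece. Claims (2) and (3) are mirror images of each other under $\swap$. For claim (4), the pair $(E_{1,2}, E_{2,1})$ is stabilized on the $1$-side by $T \times T$ (both coordinates pinned down up to scalar) and on the $\epsilon$-side by $Q \times Q$, since conjugating/multiplying by antidiagonal matrices interchanges $E_{1,2}$ and $E_{2,1}$. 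Claims (5) and (6) are the classical statement that the stabilizer of $I \in G$ under $\GG$ is the diagonal $\Delta G$, intersected with the stabilizer of the second coordinate $E_{1,2}$ (resp. $E_{1,1}$): $\Delta G \cap \Stab{E_{1,2}} = \Delta B$ and $\Delta G \cap \Stab{E_{1,1}} = \Delta T$; and no element of the $\epsilon$-coset can stabilize the pair, since such an element would have to send the invertible $I$ to the non-invertible $E_{1,2}$ (resp. $E_{1,1}$), which is impossible.

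The only genuinely delicate point is bookkeeping of the projectivization: equalities must be read in $\Pgl{2} = \bb{P}(\gl{2})$, so ``$gxh^{-1} = x$'' always means ``$gxh^{-1} = \mu x$ for some $\mu \in \Gm[F]$,'' and likewise the group elements $g,h$ are only defined up to scalar. One must check that the resulting conditions on $(g,h)$ are well-defined modulo scalars — they are, because scaling $g$ or $h$ only rescales $gxh^{-1}$, hence does not affect its class — and that the $\epsilon$-coset computation is consistent with this. I expect this scalar-tracking, together with confirming that no ``accidental'' extra symmetries appear (e.g. that the $\epsilon$-part is genuinely empty in claims (5)--(6) and genuinely a full coset translate of $Q$-type subgroups in (1)--(4)), to be the main place where care is needed; the underlying linear algebra is otherwise routine $2\times 2$ matrix computation. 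Once all six stabilizers are identified, the dimension counts are immediate: $\dim(B\times B\times\ZZ[2]) = 4$, $\dim(B \times (T\sqcup Q)) = 2+1 = 3$, $\dim(T\times T \sqcup Q\times Q) = 2$, $\dim\Delta B = 2$, $\dim \Delta T = 1$, yielding $2,3,3,4,4,5$ respectively.
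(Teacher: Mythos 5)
Your proposal is correct and is exactly the verification the paper leaves implicit (the claim is stated without proof): split the stabilizer into its $\GG\times\{1\}$ and $\GG\times\{\epsilon\}$ parts, solve the projective equations $gxh^{-1}\sim x'$ by the rank-one factorizations $E_{i,j}=e_ie_j^{T}$, note rank considerations kill the $\epsilon$-part in cases (5)--(6), and read off $\dim O_{x,y}=6-\dim P_{x,y}$. One small caveat: cases (2) and (3) are not literally exchanged by $\swap$ (indeed $\swap(E_{1,2},E_{1,1})=(E_{1,1},E_{1,2})$ lies in the same $\GGt$-orbit $O_{E_{1,2},E_{1,1}}$); the symmetry relating them is rather $(x,y)\mapsto (Sx^{T}S,Sy^{T}S)$, which exchanges the roles of left and right multiplication --- but since your direct computation treats (3) on its own, this remark does not affect the argument.
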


\begin{prop}
$X_{1}$ decomposes to the following $\GGt$ orbits:
\[O_{I      ,E_{1,1}},
O_{I      ,E_{1,2}},
O_{E_{1,2},E_{1,1}},
O_{E_{1,2},E_{1,2}},
O_{E_{1,2},E_{2,1}},
O_{E_{1,2},E_{2,2}} \]
\end{prop}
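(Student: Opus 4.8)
The plan is to decompose $X_1$ into $\GG$-orbits first, then group them into $\GGt$-orbits by tracking which pairs get identified by $\swap$. Recall $X = \Pgl[F]{2}$ and a point of $\bb P(\gl{2})$ is either invertible (hence in $O = \GG$) or is a rank-one matrix, and $G = \PGL{2}$ acts transitively on rank-one matrices from either side: indeed $\PGL{2}$ acts transitively on lines in $F^2$ (the column space) and on lines in $(F^2)^*$ (the row space), so every rank-one class in $\bb P(\gl{2})$ is $\GG$-conjugate to $[E_{1,2}]$, and $\Stab_{\GG}([E_{1,2}]) = B \times B$ since $B$ is exactly the stabilizer of the standard line under left/right action. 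Hence $X_1$ has two "types" of points: those $(x,y)$ with exactly one of $x,y$ rank one (say $y$ rank one, $x$ invertible), and those with both rank one.

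For the first type, using the left-right $\GG$-action we may normalize $x = I$; then $y$ ranges over all rank-one classes, and the residual symmetry is $\Delta G$ acting on the rank-one class $[E_{1,2}]$. Here one must classify $\Delta G$-orbits on rank-one matrices up to scalar. A rank-one matrix is $v w^T$ for a column vector $v$ and row vector $w^T$; $\Delta G$ conjugates this to $(gv)(gw)^T \cdot (\det g)^{-1}$, so the invariant is whether $w^T v = 0$ or not, i.e. whether the matrix is nilpotent (like $E_{1,2}$, giving $O_{I,E_{1,2}}$ with stabilizer $\Delta B$) or has nonzero trace (diagonalizable rank one, like $E_{1,1}$, giving $O_{I,E_{1,1}}$ with stabilizer $\Delta T$). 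This matches items 5 and 6 of \cref{stabilizers}. Note $\swap$ preserves this type (it swaps $(I,y)$ with $(y,I) \sim (I, y^{-1}\text{-adjugate})$... more carefully, $(y,I)$ is $\GG$-conjugate back to an $(I, y')$ form), so each $\GG$-orbit here is already $\swap$-stable, giving the two $\GGt$-orbits $O_{I,E_{1,1}}$ and $O_{I,E_{1,2}}$.

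For the second type — both $x,y$ rank one — normalize $x = E_{1,2}$ using $B\times B$... actually normalize $x$ to $E_{1,2}$ using all of $\GG$, leaving residual symmetry $B \times B$ acting on the rank-one class of $y$. One then classifies $(B\times B)$-orbits on $\bb P(\{\text{rank one}\})$: writing $y = v w^T$, the left $B$-action moves the column line $[v]$ (two orbits: $[e_1]$ and its complement) and the right $B$-action moves the row line $[w]$ (two orbits: $[e_1^*]$ and its complement), but these interact, and the representatives $E_{1,1}, E_{1,2}, E_{2,1}, E_{2,2}$ should exhaust the $\GG$-orbits of $(E_{1,2}, y)$. The stabilizer computations in \cref{stabilizers} (items 1–4) then pin down these four $\GG$-orbits, and I would check that $\swap$ fixes each of $O_{E_{1,2},E_{1,2}}$, $O_{E_{1,2},E_{1,1}}$ ($\leftrightarrow O_{E_{2,2},E_{1,2}}$ — need to verify this is the same $\GG$-orbit, which it should be since $E_{1,1}$ and $E_{2,2}$ are $\GG$-conjugate and the pair can be flipped), $O_{E_{1,2},E_{2,2}}$, $O_{E_{1,2},E_{2,1}}$, so that the $\GG$-orbits already are $\GGt$-orbits (the $\ZZ[2]$ is absorbed into the stabilizer, consistent with the stabilizers listed containing an $\epsilon$-component).

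The main obstacle I anticipate is the bookkeeping in the second type: verifying that $B\times B$ acting on rank-one classes really gives exactly the four listed orbits (no more, no fewer) and that distinct-looking pairs such as $(E_{1,2}, E_{1,1})$ and $(E_{1,2}, E_{2,2})$ are genuinely in different $\GGt$-orbits (they should be, as one sees from the stabilizers — $B \times (T \cup Q\epsilon)$ versus $(T\cup Q\epsilon)\times B$ are not $\GGt$-conjugate). Equivalently one can count dimensions: $\dim X_1 = 3$... no — $\dim \XX = 6$, $\dim X_1 = $ (locus where $\det x = 0$ or $\det y = 0$) $= 5$, and the top orbits $O_{I,E_{1,1}}$, $O_{E_{1,2},E_{2,1}}$ have dimension $5,4$; a dimension/closure-order sanity check confirms the list is complete. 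I would organize the write-up as: (i) reduce to rank-one classification, (ii) handle one-rank-one case via $\Delta G$, (iii) handle two-rank-one case via $B\times B$, (iv) identify $\swap$-orbits, citing \cref{stabilizers} for the stabilizer data.
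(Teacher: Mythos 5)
Your overall route is the same as the paper's: split $X_1$ by the rank profile of the pair, normalize the invertible--rank-one points to the form $(I,y)$ and the doubly rank-one points to $(E_{1,2},y)$, and classify the residual $\Delta G$- resp. $B\times B$-orbits (the paper does the latter by row/column operations where you use the column-line/row-line description; both are fine). However, one assertion in your first case is false: the $\GG$-orbits there are \emph{not} $\swap$-stable. The rank of each coordinate is a $\GG$-invariant, so every point of $\GG\act\left(I,E_{1,1}\right)$ has invertible first coordinate, while $\swap$ sends $(I,E_{1,1})$ to $(E_{1,1},I)$; in particular $(y,I)$ is never $\GG$-conjugate to a point of the form $(I,y')$. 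The correct picture — and the one the paper relies on later, when it says $W=O_{I,E_{1,1}}$ contains two disjoint $\GG$-orbits — is that each of $O_{I,E_{1,1}}$, $O_{I,E_{1,2}}$ is the union of the two mirror $\GG$-orbits through $(I,y)$ and $(y,I)$. This is also what the stabilizer data you cite says: $P_{I,E_{1,1}}=\Delta T$ and $P_{I,E_{1,2}}=\Delta B$ contain no element with nontrivial $\ZZ[2]$-component, unlike the stabilizers in the doubly singular case. The slip does not change the list of $\GGt$-orbits, since your normalization already used $\swap$ to put the invertible entry first, so exhaustion and distinctness still go through; but the sentence claiming $\swap$-stability of these $\GG$-orbits must be deleted or corrected.

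For the doubly rank-one case your plan is sound and matches the paper in substance: the $(B\times B)$-orbits on rank-one classes are exactly the four combinations of "column line equal to or different from $[e_1]$" with the analogous dichotomy for the row line, giving the representatives $E_{1,1},E_{1,2},E_{2,1},E_{2,2}$. The pair that genuinely needs an extra argument is the equal-dimensional one, $O_{E_{1,2},E_{1,1}}$ versus $O_{E_{1,2},E_{2,2}}$; your suggestion to separate them by their stabilizers works (they are not conjugate in $\GGt$, since conjugation acts componentwise on the $\GG$-part), or one can note directly that for $(g,h)\in B\times B$ the matrix $gE_{1,1}h^{-1}$ always has nonzero $(1,1)$-entry, hence is never proportional to $E_{2,2}$. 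The paper settles distinctness by the same dimension count plus one explicit non-equality of $B\times B$-orbits, so apart from the $\swap$-stability error your proposal is essentially the paper's proof.
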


\begin{proof}
We can decompose $X_1$ to the $\GGt$-invariant subsets
\[X_{1,\text{inv}}=\left\{\left(x,y\right)\in X_{1} \,|\, x \text{ or } y \text{ are invertible}\right\}\] 
\[X_{1,\text{noninv}} = \left\{\left(x,y\right)\,|\, x,y \text{ are non-invertible}\right\}\]
Let $\left(x,y\right)\in X_{1,\text{inv}}$. It is equivalent under the action of $\ZZ[2]$ to $\left(y,x\right)$ so we may assume $x$ is invertible. Thus the point is equivalent to $\left(I,x^{-1}y\right)$.
$x^{-1}y$ is conjugate over $\overline{F}$ to a non-invertible Jordan matrix $J$, which is either 
\[J=\left[\begin{array}{cc} \lambda & 0 \\ 0 & 0 \end{array}\right]
  =\left[\begin{array}{cc} 1 & 0 \\ 0 & 0 \end{array}\right]=E_{1,1}\]
or
\[J=\left[\begin{array}{cc} 0 & 1 \\ 0 & 0 \end{array}\right]=E_{1,2}\] 
As both cases are defined over $F$, $x^{-1}y$ is conjugate over $F$ to one of $E_{1,1}$ or $E_{1,2}$.
In summary $X_{1,\text{inv}}$ decomposes as $O_{\left(I,E_{1,1}\right)}$ and $O_{\left(I,E_{1,2}\right)}$.

Let $\left(x,y\right)\in X_{1,\text{noninv}}$. We may assume $x$ is of Jordan form as before, i.e. $x=E_{1,2}$ or $x=E_{1,1}$. By replacing $x$ with $x\cdot S$ we may assume $x=E_{1,2}$. As $\Stab[\GG]{E_{1,2}} = B\times B$ we may multiply $y$ from left and from right by upper triangular matrices, i.e. we may multiply each column and each row of $y$ by a scalar, add the first row to the second and add the first column to the second. Since $y$ is not invertible, either its first row is zero or the second row is linearly dependent on the first row. In the second case we may add the first row multiplied by some scalar to the second so we may assume one of the rows of $y$ is zero. By the same argument, one of $y$'s columns is zero. We got that $\left(x,y\right)$ is equivalent to one of the following:
\[ \left( E_{1,2}, E_{1,1} \right),
  \left( E_{1,2}, E_{1,2} \right),
  \left( E_{1,2}, E_{2,1} \right),
  \left( E_{1,2}, E_{2,2} \right)\]
In each of those cases $\swap$ is represented by an element of $\GG$, so their orbits under $\GGt$ are the same as their orbits under $\GG$. Since 
\[ \dim O_{E_{1,2}, E_{1,1}} < 
  \dim O_{E_{1,2}, E_{1,2}} = 
  \dim O_{E_{1,2}, E_{2,1}} < 
  \dim O_{E_{1,2}, E_{2,2}}\]
we only have to prove that 
\[O_{E_{1,2},E_{1,2}}\neq O_{E_{1,2}, E_{2,1}}\]
or equivalently that 
\[ \left(B\times B\right)\act E_{1,2} \neq
  \left(B\times B\right)\act E_{2,1} \]
which is obvious as $E_{1,2}$ is an upper-triangular matrix and $E_{2,1}$ is not.
\end{proof}

\begin{prop} 
\label{GeomOfX1}
We have the following inclusions of closures of orbits:
\[\begin{aligned}
	&\overline{ O_{E_{1,2}, E_{1,2}} } \subseteq
	\overline{ O_{E_{1,2}, E_{1,1}} } \cap 	\overline{ O_{E_{1,2}, E_{2,2}} } \\
	&\overline{ O_{E_{1,2}, E_{1,1}} } \cup 	\overline{ O_{E_{1,2}, E_{2,2}} } \subseteq
  	\overline{ O_{E_{1,2}, E_{2,1}} } \cap 	\overline{ O_{I      , E_{1,2}} } \\
  	&\overline{ O_{E_{1,2}, E_{2,1}} } \cup 	\overline{ O_{I      , E_{1,2}} } \subseteq
  	\overline{ O_{I      , E_{1,1}} } 
\end{aligned}\]
and there are no other inclusions.

\ \ \ \ \ \ \ \ \ \ \ \ \ \ \ \ \ \ \ \ \ \ \ \ 
\begin{tikzpicture}[{baseline=(current bounding box.north)}]%[node distance=2cm]
\centering
\title{Figure 1}
\node(A)                           {$O_{I,E_{1,1}}$};
\node(B1)       [below right=1cm and 2cm of A] {$O_{E_{1,2},E_{2,1}}$};
\node(B2)       [below left=1cm and 2.2cm of A]  {$O_{I,E_{1,2}}$};
\node(C1)		[below right=3cm and 2cm of A] {$O_{E_{1,2},E_{1,1}}$};
\node(C2)		[below left=3cm and 2cm of A] {$O_{E_{1,2},E_{2,2}}$};
\node(D)		[below right=1cm and 1.75cm of C2] {$O_{E_{1,2},E_{1,2}}$};
\draw (A) -- (B1);
\draw (A) -- (B2);
\draw (B1) -- (C1);
\draw (B1) -- (C2);
\draw (B2) -- (C1);
\draw (B2) -- (C2);
\draw (C1) -- (D);
\draw (C2) -- (D);
\end{tikzpicture}
%\caption{Inclusion relations between the orbits of $\XX$}
%\end{figure}

\end{prop}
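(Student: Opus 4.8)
The plan is to establish the six orbit-closure inclusions drawn in Figure 1 by exhibiting, for each pair of orbits $O \subseteq \overline{O'}$, an explicit one-parameter family inside $O'$ degenerating to a point of $O$; and then to rule out all other inclusions using the dimension data from \cref{stabilizers} together with the $\GG$-structure of each orbit. First I would record the dimensions once more: $\dim O_{E_{1,2},E_{1,2}}=2$, $\dim O_{E_{1,2},E_{1,1}}=\dim O_{E_{1,2},E_{2,2}}=3$, $\dim O_{E_{1,2},E_{2,1}}=\dim O_{I,E_{1,2}}=4$, $\dim O_{I,E_{1,1}}=5$. Since a proper inclusion $O\subsetneq\overline{O'}$ forces $\dim O<\dim O'$, the only candidate inclusions are exactly the ones in the Hasse diagram (plus possibly $O_{E_{1,2},E_{1,1}}$ vs.\ $O_{E_{1,2},E_{2,2}}$, which have equal dimension and hence cannot lie in each other's closure unless equal — and they are distinct by the previous proposition). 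So the combinatorial skeleton is forced; only the listed inclusions need to be verified, and nothing else can occur.

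For the inclusions themselves I would write down degenerations. For $\overline{O_{E_{1,2},E_{1,1}}}\supseteq O_{E_{1,2},E_{1,2}}$: conjugating $E_{1,1}$ by $\mathrm{diag}(1,t)$ acting appropriately, or more directly acting on the second coordinate by $\left(\left[\begin{smallmatrix}1&0\\0&t\end{smallmatrix}\right],\left[\begin{smallmatrix}1&0\\0&1\end{smallmatrix}\right]\right)$ and rescaling in $\Pgl{2}$, one sends $E_{1,1}=\left[\begin{smallmatrix}1&0\\0&0\end{smallmatrix}\right]$ to $\left[\begin{smallmatrix}t&0\\0&0\end{smallmatrix}\right]$; as $t\to 0$ this is not yet a degeneration of the matrix itself (it stays $E_{1,1}$ projectively), so instead I would use a family like $\left[\begin{smallmatrix}1&0\\0&t\end{smallmatrix}\right]\mapsto\left[\begin{smallmatrix}1&0\\0&0\end{smallmatrix}\right]$ pairs, realizing that one must move within $O_{E_{1,2},E_{1,1}}$ toward its boundary — concretely, left/right $B\times B$-translates of $E_{1,1}$ whose limit is $E_{1,2}$: e.g.\ $\left[\begin{smallmatrix}1&0\\-t^{-1}&0\end{smallmatrix}\right]$ conjugated back is singular and in the limit, after rescaling, lands on a matrix in $O_{E_{1,2}}$ in the first slot and $E_{1,2}$ in the second. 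The cleanest approach is: since $O_{E_{1,2},E_{2,2}}$ and $O_{E_{1,2},E_{1,1}}$ are each swept out by $B\times B$ acting on the second coordinate, and $O_{E_{1,2},E_{1,2}}$ is the $B\times B$-orbit of $(E_{1,2},E_{1,2})$, it suffices to check that $E_{1,2}\in\overline{(B\times B)\cdot E_{1,1}}$ and $E_{1,2}\in\overline{(B\times B)\cdot E_{2,2}}$ in $\Pgl{2}$, which are direct rank-one-matrix computations. The analogous argument handles the middle row ($E_{1,1},E_{2,2}\in\overline{(B\times B)\cdot E_{2,1}}$, and the statements with $I$ in the first coordinate reduce by the $O$-stratification to $E_{1,1},E_{2,2}\in\overline{GL_2\cdot E_{1,2}\cdot GL_2}$-type closures in $\Pgl{2}$, i.e.\ the classical fact that rank-$1$ matrices are limits of nearby rank-$1$ matrices of every nilpotent/semisimple type after projectivization), and the bottom row ($O_{E_{1,2},E_{2,1}}, O_{I,E_{1,2}}\subseteq\overline{O_{I,E_{1,1}}}$) similarly.

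The main obstacle, I expect, is not the existence of the degenerations — those are routine $2\times 2$ matrix limits in $\bb{P}(\gl{2})$ — but rather organizing the bookkeeping so that the limit computations are genuinely carried out \emph{inside} the ambient compactification $X=\Pgl[F]{2}$ and inside the correct orbit, rather than accidentally landing in the open orbit $O$ or in a wrong boundary stratum; and, symmetrically, cleanly justifying the \emph{non}-inclusions beyond the dimension count (for the two $4$-dimensional orbits $O_{E_{1,2},E_{2,1}}$ and $O_{I,E_{1,2}}$, whose incomparability must be argued by an invariant — e.g.\ in $O_{I,E_{1,2}}$ the two coordinates generate the same rank-stratification pattern as a pair $(I,\text{nilpotent})$ while in $O_{E_{1,2},E_{2,1}}$ both coordinates are singular, a condition preserved under limits, so neither closure can contain the other's generic point). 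Once the diagram is checked to be a valid Hasse diagram of a finite poset with the stated dimensions, transitivity of closure-containment gives all the displayed inclusions and the dimension/invariant obstructions give "no others," completing the proof.
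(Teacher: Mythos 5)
Your dimension bookkeeping reproduces the paper's argument for the ``no other inclusions'' half: every containment of a lower-dimensional orbit in a higher-dimensional closure is either asserted or follows by transitivity, so only equal-or-higher-dimensional containments need ruling out, and the dimension count from \cref{stabilizers} does exactly that (your extra ``both coordinates singular'' invariant for the two $4$-dimensional orbits is therefore superfluous). Your treatment of the three inclusions whose ambient orbit has a representative with first coordinate $E_{1,2}$ also matches the paper: since $\Stab[\GG]{E_{1,2}}=B\times B$, it suffices to check $E_{1,2}\in\overline{\left(B\times B\right)\act E_{1,1}}$, $E_{1,2}\in\overline{\left(B\times B\right)\act E_{2,2}}$ and $E_{1,1},E_{2,2}\in\overline{\left(B\times B\right)\act E_{2,1}}$ inside $\Pgl[F]{2}$, and these are precisely the computations the paper carries out.

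The genuine gap is in the remaining four edges, namely $O_{E_{1,2},E_{1,1}},O_{E_{1,2},E_{2,2}}\subseteq\overline{O_{I,E_{1,2}}}$ and $O_{E_{1,2},E_{2,1}},O_{I,E_{1,2}}\subseteq\overline{O_{I,E_{1,1}}}$, which are exactly the nontrivial ones because the limit points have both coordinates singular while the orbits being closed up have an invertible coordinate. Your proposed reduction ``by the $O$-stratification to $E_{1,1},E_{2,2}\in\overline{\GL{2}\cdot E_{1,2}\cdot \GL{2}}$'' is not a valid reduction: that is a statement about a single matrix (and trivially true, since $E_{1,1}$ is itself rank one), whereas what is needed is a family of \emph{pairs} inside $O_{I,E_{1,2}}$ converging to $\left(E_{1,2},E_{1,1}\right)$; in any such family the first coordinate must simultaneously degenerate out of the invertible locus, so you cannot hold it fixed and move only the second coordinate. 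The paper's device is to use the swap to see $\left(E_{1,2},I\right)\in O_{I,E_{1,2}}$ and then act by $\Stab[\GG]{E_{1,2}}=B\times B$, obtaining $\left\{E_{1,2}\right\}\times B$, whose closure already contains $\left(E_{1,2},E_{1,1}\right)$ and $\left(E_{1,2},E_{2,2}\right)$. Likewise the bottom row is not ``similar'' to anything you computed: one needs explicit degenerations, e.g.\ the paper's family $\left(I,\left[\begin{smallmatrix}\delta & 1\\ \delta\epsilon & \epsilon\end{smallmatrix}\right]\right)$, which lies in $\overline{\left\{\left(I,gE_{1,1}g^{-1}\right)\,|\,g\in G\right\}}$ and tends to $\left(I,E_{1,2}\right)$, and the closure of $\left(B\times B\right)\act\left(SE_{1,1},S\right)$, which contains $\overline{\left\{E_{1,2}\right\}\times\left(B\cdot S\right)}\ni\left(E_{1,2},E_{2,1}\right)$. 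As written, half of the displayed inclusions are asserted rather than proved.
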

\begin{proof}
\[\begin{aligned}
	\left( E_{1,2}, E_{1,2} \right) 
	& \in	\overline{\left\{
		E_{1,2}
	\right\} \times \left\{
		\alpha E_{1,1} + \beta E_{1,2} \,|\,
		\alpha \neq 0 
	\right\}} 												\\
	& = \overline{\left(B\times B\right)\act\left(E_{1,2}, E_{1,1} \right)} \\
	& \subseteq \overline{O_{E_{1,2},E_{1,1}}} 
\end{aligned}\]\\

\[\begin{aligned}
	\left( E_{1,2}, E_{1,2} \right) 
	& \in \overline{\left\{E_{1,2}\right\} \times 
		\left\{
			\alpha E_{2,2} + \beta E_{1,2} \,|\,
		\alpha \neq 0 
	\right\}}												\\
	& = \overline{\left(B\times B\right)\act\left(E_{1,2}\, E_{2,2} \right)}	\\
	& \subseteq \overline{O_{E_{1,2},E_{2,2}}}
\end{aligned}\]\\

\[\begin{aligned}
	\left(E_{1,2},E_{1,1}\right),	\left(E_{1,2},E_{2,2}\right) 
	& \in \{E_{1,2}\}\times \{\text{non invertbile matrices}\}	\\
	& = \overline{\left\{
		\left (
			E_{1,2}, 
			\left[\begin{array}{cc}
				a & b \\ 
				c & d
			\end{array}\right]	
		\right ) \,|\, 
		c\neq 0, ad=bc 
  	\right\}}													\\
  	& = \overline{\left(B\times B\right)\act \left(E_{1,2},E_{2,1}\right)}	\\
  	& \subseteq \overline{O_{E_{1,2},E_{2,1}}}
\end{aligned}\]\\

\[\begin{aligned}
	\left(E_{1,2},E_{1,1}\right),	\left(E_{1,2},E_{2,2}\right)
	& \in \left\{
		\left(E_{1,2},
		\left[\begin{array}{cc}
			* & * \\
			0 & *
		\end{array}\right]\right)
	\right\}					\\
	& = \overline{
		\left\{E_{1,2}\right\}\times
		B
	}											\\
	& = \overline{\left(B\times B\right)\act\left(E_{1,2},I\right)}	\\
	& \subseteq \overline{O_{I,E_{1,2}}}
\end{aligned}\]\\

\[\begin{aligned}
	\left(I,E_{1,2}\right)
	& \in \overline{ \left\{
		\left(I, \left[\begin{array}{cc}
			\delta & 1 \\
			\delta\epsilon & \epsilon
		\end{array}\right]\right)
		\,\bigg|\,
		\delta,\epsilon\neq 0
	\right\}}								\\
	& \subseteq \overline{\left\{
		\left( I, gE_{1,1}g^{-1}\right )
		\,|\,
		g\in G
	\right\}}								\\
	& \subseteq \overline{O_{I,E_{1,1}}} 
\end{aligned}\]\\

\[\begin{aligned}
	\left(E_{1,2},E_{2,1}\right)
	& \in \left\{ 
		\left(E_{1,2}, \left[\begin{array}{cc}
			* & * \\
			* & 0
		\end{array}\right]\right)
	\right\}													\\
	& = \overline{\left\{E_{1,2}\right\}\times \left(B\cdot S\right)}				\\
	& \subseteq \overline{\left(B\times B\right)\act \left(SE_{1,1}, S\right)}										\\
	& \subseteq \overline{O_{I,E_{1,1}}} 
\end{aligned}\]

A closure of an orbit can not contain a different orbit of an equal or higher dimension. Using \cref{stabilizers} we conclude that there can not be more inclusions.
\end{proof}
Let 
\[Z:= 
O_{E_{1,2},E_{1,2}} \cup 
O_{E_{1,2},E_{1,1}} \cup 
O_{E_{1,2},E_{2,2}} \cup 
O_{I      ,E_{1,2}} \]
it is a $\GGt$-set, and by \cref{GeomOfX1} it follows that $\overline{Z} = Z$, i.e. $Z$ is closed in $X_{1}$ and therefore in $\XX$. Denote by $P:=O_{E_{1,2},E_{2,1}}$, $W:=O_{I,E_{1,1}}$, $C:=X_{1} \setminus Z=W\cup P$, and $U:=C\cup O=\XX\setminus Z$. Then $U$ is open in $\XX$, $C$ is closed in $U$ and $P$ is closed in $C$.

\subsection{Reduction to $U$}
\begin{thm}
The restriction morphism $\Dist{\XX}^{\GGt,\chi}\to \Dist{U}^{\GGt,\chi}$ is an embedding. 
\end{thm}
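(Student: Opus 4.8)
The plan is to use the long exact sequence for Schwartz functions from \cref{exactSeq} and dualize it. Recall that $U$ is open in $\XX$ with closed complement $Z$, so \cref{exactSeq} gives a short exact sequence $0\to\Sch{U}\to\Sch{\XX}\to\Sch{Z}\to 0$. Dualizing yields an exact sequence of distribution spaces $0\to\Dist{Z}\to\Dist{\XX}\to\Dist{U}\to 0$ (the restriction $\Dist{\XX}\to\Dist{U}$ being dual to continuation-by-zero $\Sch{U}\hookrightarrow\Sch{\XX}$, and $\Dist{Z}\hookrightarrow\Dist{\XX}$ dual to restriction $\Sch{\XX}\to\Sch{Z}$). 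All the maps here are $\GGt$-equivariant, and continuation-by-zero and restriction both intertwine the characters, so taking $\GGt$-equivariant $\chi$-isotypic parts is left exact and we get an exact sequence
\[
0\to\Dist{Z}^{\GGt,\chi}\to\Dist{\XX}^{\GGt,\chi}\to\Dist{U}^{\GGt,\chi}.
\]
Hence the restriction map $\Dist{\XX}^{\GGt,\chi}\to\Dist{U}^{\GGt,\chi}$ is injective precisely when $\Dist{Z}^{\GGt,\chi}=0$.

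So the real content is to show $\Dist{Z}^{\GGt,\chi}=0$. Here $Z$ is a finite union of $\GGt$-orbits: $Z = O_{E_{1,2},E_{1,2}}\cup O_{E_{1,2},E_{1,1}}\cup O_{E_{1,2},E_{2,2}}\cup O_{I,E_{1,2}}$, stratified by dimension, and by \cref{GeomOfX1} the lower-dimensional ones sit in the closures in the expected way. I would peel off orbits one at a time from the open stratum inward, using \cref{exactSeq} again at each stage: if $Y'\subseteq Y$ is a closed $\GGt$-orbit inside a locally closed $\GGt$-set $Y$ with open complement $Y\setminus Y'$, then $\Dist{Y\setminus Y'}^{\GGt,\chi}=0$ and $\Dist{Y'}^{\GGt,\chi}=0$ together force $\Dist{Y}^{\GGt,\chi}=0$. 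It therefore suffices to check $\Dist{Y}^{\GGt,\chi}=0$ for each individual orbit $Y$ in $Z$.

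For a single orbit $Y=O_{x,y}\cong\GGt/P_{x,y}$, this is exactly the setting of the Bernstein--Gelfand--Kazhdan--Zelevinsky criterion (\cref{BGKZ}): with $\cal{F}$ the (twisted by $\chi$) trivial equivariant line bundle on the single orbit, one must verify that for the stabilizer $H=P_{x,y}$ the one-dimensional space $\bb{C}_\chi\otimes\Del_{\GGt}\big|_{H}\otimes\Del_{H}^{-1}$ has no nonzero $H$-invariant vectors, i.e. that the character $\chi\cdot\Del_{\GGt}\big|_{H}\cdot\Del_{H}^{-1}$ is nontrivial on $H$. Since $\GGt$ and its identity component are unimodular (they are reductive up to the finite factor $\ZZ[2]$), $\Del_{\GGt}\big|_{H}$ is trivial, so the condition reduces to: $\chi\big|_{H}\neq\Del_H$ as characters of $H$, equivalently there is $h\in H$ lying over $\epsilon\in\ZZ[2]$ with $\Del_H(h)=1$ — or more simply, it suffices to exhibit an element $h\in H\cap(\GG\times\{\epsilon\})$ on which $\Del_H$ is trivial, since then $\chi(h)=-1\neq 1=\Del_H(h)$. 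Using the explicit stabilizers from \cref{stabilizers} (e.g. $P_{E_{1,2},E_{1,2}}=B\times B\times\ZZ[2]$, $P_{I,E_{1,2}}=\Delta B$ acting with the $\epsilon$-component, $P_{E_{1,2},E_{2,1}}=T\times T\times\{1\}\cup Q\times Q\times\{\epsilon\}$, etc.), one computes the modular characters of these explicit groups — products and extensions of Borels $B$, tori $T$, and the finite-order swap-twisting pieces $Q$ — and checks the nonvanishing in each case. The main obstacle is the bookkeeping in this modular-character computation: one must correctly identify $\Del_H$ on the disconnected groups $H$ (the $\epsilon$-component conjugates the identity component, so $\Del_H$ on that component is governed by the determinant of that conjugation action on $\operatorname{Lie}(H)$), and confirm that for each of the four orbits the twisting character $\chi$ genuinely fails to match $\Del_H$; the orbits whose stabilizer meets $\GG\times\{\epsilon\}$ via an order-two element acting without fixed Haar scaling (so $\Del_H=1$ there) are the easy cases, and I expect all four to fall into essentially that pattern, but the disconnected-group modular-character check is where care is needed.
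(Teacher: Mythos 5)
Your reduction is exactly the paper's: dualize the sequence of \cref{exactSeq}, use left-exactness of $\left(\GGt,\chi\right)$-invariants, and reduce the theorem to $\Dist{Z}^{\GGt,\chi}=0$, to be proved via \cref{BGKZ}. (Peeling $Z$ orbit by orbit is a harmless variant; the criterion as stated already applies to the non-homogeneous $Z$ by checking the condition at each point.)

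The gap is in the decisive verification, which you leave as ``bookkeeping'' and for which the shortcut you propose does not work uniformly. You want to certify $\chi\big|_{H}\neq\Del_{H}$ for each stabilizer $H$ by exhibiting $h\in H\cap\left(\GG\times\left\{\epsilon\right\}\right)$ with $\Del_{H}\left(h\right)=1$; this is not equivalent to the condition, and for the orbit $O_{I,E_{1,2}}$ it cannot even be attempted: by \cref{stabilizers} its stabilizer is $\Del B\subseteq\GG\times\left\{1\right\}$, which contains no element over $\epsilon$ at all, so $\chi\big|_{H}$ is trivial there and the vanishing must instead come from $\Del_{\Del B}\neq 1$. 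The paper's observation makes all four orbits trivial at once and is the point you are missing: every point of $Z$ has a non-unimodular stabilizer (each stabilizer in \cref{stabilizers} for the orbits of $Z$ contains a Borel factor, or is $\Del B$ itself), $\Del_{\GGt}$ is trivial, and since $\chi$ takes values in roots of unity while $\Del_{H}$ is a positive-real-valued character, the character $\chi\cdot\Del_{H}^{-1}$ is automatically nontrivial whenever $\Del_{H}$ is; hence $\left(\chi\otimes\Del_{H}^{-1}\right)^{H}=0$ at every point and \cref{BGKZ} gives $\Dist{Z}^{\GGt,\chi}=0$ with no case-by-case analysis of the disconnected groups or of elements over $\epsilon$. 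With that substitution your argument closes; as written, the final step is both unfinished and, in the form proposed, unworkable for one of the orbits.
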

\begin{proof}
By \cref{exactSeq} we have an exact sequence
\[0\to \Dist{Z} \to \Dist{\XX} \to \Dist{U} \to 0\]
Since $\left(\GGt,\chi\right)$-invariants is a left exact functor, we have the exact sequence
\[0\to \Dist{Z}^{\GGt,\chi} \to \Dist{\XX}^{\GGt,\chi} \to \Dist{U}^{\GGt,\chi}\]
Thus we wish to prove that $\Dist{Z}^{\GGt,\chi} = 0$. This is an easy use of the Bernstein-Gelfand-Kazhdan-Zelevinsky criterion (\cref{BGKZ}). Notice that each point in $Z$ has a stabilizer which is conjugate to a non-unimodular group and so is not unimodular. Let $\cal{F}$ be the sheaf on $Z$ associated to $\chi$. For any $z\in Z$:
\[ 
	\left(
		\cal{F}_{z} 						\otimes
		\Delta_{\GGt}	 						\otimes
		\Delta^{-1}_{\left(\GGt\right)_{z} }
	\right)^{\left(\GGt\right)_{z}} =
	\left( 
		\chi 									\otimes 
		\Delta^{-1}_{\left(\GGt\right)_{z} }
	\right)^{\left(\GGt\right)_{z}} = 0
\]
By Bernstein-Gelfand-Kazhdan-Zelevinsky:
\[
	\Dist{Z}^{\GGt,\chi} = 
	\Dist{Z, \cal{F}}^{\GGt} = 0
\]
\end{proof}
Therefore, to prove that $\Dist{\XX}^{\GGt,\chi}=0$ it is enough to prove that $\Dist{U}^{\GGt,\chi}=0$.

\subsection{Reduction to $C$}
In this subsection we will prove that \cref{C} follows from the following theorem:
\begin{thm}
\label{D}
$\Dist{C}^{\GGt} = 0$.
\end{thm}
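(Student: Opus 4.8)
\textbf{Proof plan for \cref{D}.}
Read with the Gelfand--Kazhdan character $\chi$ (trivial on $\GG$, $-1$ on $\epsilon$), \cref{D} is the assertion $\Dist{C}^{\GGt,\chi}=0$. The engine of the proof is \cref{realCross}, $\Dist{\left\{xy=0\right\}}^{\Gmt[F],\chi}=0$: the plan is to show that in a neighbourhood of its closed orbit $P=O_{E_{1,2},E_{2,1}}$ the space $C$ is a fibre bundle over $P$ whose fibre is the coordinate cross, so that restricting invariant generalized functions to a transversal slice through $P$ reduces \cref{D} to \cref{realCross}.

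I would begin at the base point $p_{0}=\left(E_{1,2},E_{2,1}\right)\in P$, whose $\GGt$-stabilizer is $H:=\Stab[\GGt]{p_{0}}=T\times T\times\left\{1\right\}\cup Q\times Q\times\left\{\epsilon\right\}$ (\cref{stabilizers}). Taking the representative $\left(S,S,\epsilon\right)$, which squares to the identity and conjugates $T\times T$ by swapping the two factors and inverting each coordinate, gives $H\cong\left(T\times T\right)\rtimes\ZZ[2]$; both $H$ and $\GGt$ are unimodular (this involution permutes coordinates of $T\times T$ and so preserves Haar measure). By the $F$-analytic slice theorem for the locally closed orbit $P\subseteq U$ there are a $\GGt$-invariant open neighbourhood $U'$ of $P$ in $U$, a $\GGt$-equivariant retraction $\rho\colon U'\to P$, and an $H$-invariant $2$-dimensional analytic slice $M=\rho^{-1}\left(p_{0}\right)$ transversal to $P$; shrinking $U'$ we may assume $M\cap Z=\emptyset$, so $M\subseteq U$. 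The key computation is the weights of $T\times T$ on $T_{p_{0}}\XX=T_{\left[E_{1,2}\right]}X\oplus T_{\left[E_{2,1}\right]}X$: writing $u_{i}$ for the coordinate on the $i$-th copy of $T\cong\Gm[F]$, they are $u_{1}^{\pm1},u_{2}^{\pm1},\left(u_{1}u_{2}\right)^{\pm1}$, while $T_{p_{0}}P$ carries exactly $u_{1}^{\pm1},u_{2}^{\pm1}$; hence $T_{p_{0}}M$ has precisely the two weights $u_{1}u_{2}$ and $\left(u_{1}u_{2}\right)^{-1}$, and the involution $\left(S,S,\epsilon\right)$, which sends $u_{1}u_{2}$ to $\left(u_{1}u_{2}\right)^{-1}$, interchanges the two weight lines. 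Linearizing the $H$-action at the fixed point $p_{0}$ therefore identifies $\left(M,H\right)$ with $\Aff[F]{2}$ under the standard action of $\Gmt[F]=\Gm[F]\rtimes\ZZ[2]$, i.e.\ $s\act\left(x,y\right)=\left(sx,s^{-1}y\right)$ and $\epsilon\act\left(x,y\right)=\left(y,x\right)$. Under this identification $M\cap C$ is the cross $\left\{xy=0\right\}$: transversality forces $M\cap P=\left\{0\right\}$; by \cref{GeomOfX1} the point $p_{0}$ lies in the closure of each of the two $\GG$-orbits $W_{1},W_{2}$ whose disjoint union is $W$, so the two punctured axes are $M\cap W_{1}$ and $M\cap W_{2}$, whereas the curves $\left\{xy=c\right\}$ with $c\neq0$ lie in the open part $O$. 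Finally $\chi|_{H}$ is trivial on $T\times T$ and equals $-1$ on $\left(S,S,\epsilon\right)$, so it descends to the character $\chi$ of $\Gmt[F]$ in \cref{realCross}.

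Next I would transport distributions onto the slice. Any $\xi\in\Dist{U}^{\GGt,\chi}$ is in particular $\GG$-invariant, so by \cref{action-WF} $\WF[x]{\xi}\subseteq\left(da_{x}\left(\mathfrak{g}\oplus\mathfrak{g}\right)\right)^{\perp}=\normal[U]{\GG x}$ for every $x$. Choosing $M$ in general position relative to the finitely many $\GGt$-orbits it meets makes it transversal to all of them, whence $\normal[U]{\GG x}\cap\normal[U]{M}$ lies in the zero section for $x\in M$, and \cref{rest-crit} gives a restriction map $\genfun{U}^{\GGt,\chi}\to\genfun{M}^{H,\chi|_{H}}\cong\genfun{\Aff[F]{2}}^{\Gmt[F],\chi}$ with $\supp\left(\xi|_{M}\right)\subseteq\supp\xi\cap M$; in particular $\genfun{C}^{\GGt,\chi}$ lands in $\genfun{\left\{xy=0\right\}}^{\Gmt[F],\chi}$. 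On $U'$ this map is inverse to Frobenius descent along $\rho$ (\cref{frobDesc}), whose modular correction is trivial by unimodularity, so it is in fact an isomorphism $\genfun{U'}^{\GGt,\chi}\xrightarrow{\sim}\genfun{M}^{H,\chi|_{H}}$, restricting to an isomorphism $\genfun{C\cap U'}^{\GGt,\chi}\xrightarrow{\sim}\genfun{M\cap C}^{H,\chi|_{H}}$ since $C\cap U'=\GGt\act\left(M\cap C\right)$. The restriction map is injective on $\genfun{C}^{\GGt,\chi}$: it factors through $\genfun{C}^{\GGt,\chi}\to\genfun{C\cap U'}^{\GGt,\chi}$, and a generalized function in the kernel of this last map is supported on the $\GGt$-invariant set $C\setminus U'$, which is closed in $C$ and disjoint from $P$, hence contained in the single orbit $W$; but the only $\GGt$-invariant subset of $W$ closed in $C$ is empty, since the closure of $W$ in $C$ equals $C$ (it contains $P$ by \cref{GeomOfX1}).

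Putting this together, $\genfun{C}^{\GGt,\chi}$ injects into $\genfun{\left\{xy=0\right\}}^{\Gmt[F],\chi}$, which is $0$ by \cref{realCross} once that statement, phrased there for distributions, is converted to generalized functions by tracking the $\Gmt[F]$-action on the density bundles of $\Aff[F]{2}$ and of $\left\{xy=0\right\}$; this is the one place where the distinction between distributions and generalized functions has to be handled, and it is routine. Hence $\genfun{C}^{\GGt,\chi}=0$, and the same bookkeeping run backwards gives $\Dist{C}^{\GGt,\chi}=0$. The step I expect to be the main obstacle is setting up the restriction to the slice: constructing $M$ (the $F$-analytic slice theorem for $P$), checking that the restriction is defined — the wave-front/transversality input from \cref{action-WF} and \cref{rest-crit}, the only genuinely analytic ingredient — and carrying out the explicit weight computation at $p_{0}$ that pins $\left(M,M\cap C\right)$ down as the standard pair $\left(\Aff[F]{2},\left\{xy=0\right\}\right)$ over $\Gmt[F]$; the support argument for injectivity and the density-bundle bookkeeping are then routine.
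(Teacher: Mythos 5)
The reading of the statement with the character $\chi$ is the right one, and your infinitesimal analysis at $p_{0}=\left(E_{1,2},E_{2,1}\right)$ (stabilizer $H$, the weights $\left(u_{1}u_{2}\right)^{\pm1}$ on a transversal, the identification of the transversal model with the $\Gmt[F]$-cross) is correct and is indeed the geometric heart of the matter. The gap is the tool you use to globalize it: an ``$F$-analytic slice theorem'' producing a $\GGt$-invariant open $U'\supseteq P$ in $U$, a $\GGt$-equivariant analytic retraction $\rho\colon U'\to P$, and an equivariant linearization of $H$ at $p_{0}$. No such theorem is available here and none is cited in the paper: the $\GGt$-action on $U$ is not proper and $H\cong\left(T\times T\right)\rtimes\ZZ[2]$ is not compact, so Palais-type slice theorems and Bochner linearization do not apply, while Luna-type slice theorems are algebraic and \'etale-local, not of the ``invariant open neighbourhood with retraction'' form you need. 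Note how strong the requested structure is: since $P\subseteq\overline{W}$ and every neighbourhood of $p_{0}$ meets the $\GG$-orbits in $O$ of pairs $\left(x,y\right)$ with $x^{-1}y$ regular semisimple of small eigenvalue ratio, any invariant $U'$ contains all of $W$ and infinitely many $5$-dimensional orbits, and $\rho$ would have to fibre all of them over $P$ equivariantly and analytically, degenerating correctly along $C$. Everything downstream of this point — the asserted isomorphism $\genfun{U'}^{\GGt,\chi}\cong\genfun{M}^{H,\chi|_{H}}$ via Frobenius descent along $\rho$, and hence your injectivity argument — rests on this unconstructed object, so the proposal has a hole exactly at its engine. (A smaller but real issue: ``general position'' does not give transversality of the slice to all orbits it meets; any such plane through $p_{0}$ meets the low-dimensional orbits in $O$ along the locus where the two matrices become projectively proportional — the paper's $\alpha\beta=1$ — and transversality genuinely fails there; it is rescued only because $\supp\xi\subseteq C$, \cref{distSuppOnC}.)

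For comparison, the paper never needs a neighbourhood retraction. It (i) shows restriction to the open orbit $W$ of $C$ is injective on $\Dist{C}^{\GGt,\chi}$ because $\Dist{P}^{\GGt,\chi}=0$ (\cref{bundlesOnPTriv}, using that $P_{\Pi}$ is a point on which $\ZZ[2]$ acts trivially); (ii) applies Harish-Chandra's submersion principle and Frobenius descent only along the single orbit $W$, via $\widehat{W_{\Pi}}=W_{\Pi}\times_{\Gmt[F]}\GGt$, to get the map $\eta$ into $\genfun{W_{\Pi},\Dens{W}\big|_{W_{\Pi}}}^{\Gmt[F],\chi}$; (iii) restricts $\xi$ to the explicit plane $\Pi$ using the wave-front bound of \cref{action-WF} together with \cref{WFcapPi} and \cref{rest-crit}; and (iv) proves in \cref{important}, by a density-of-smooth-sections argument and the bundle trivializations of \cref{vectBunTriv}, that the slice restriction agrees with $\eta$ — this compatibility statement, which you dismiss as routine bookkeeping, is precisely the substitute for the slice theorem and is where most of the work of \cref{crossMethod} lies. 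Only then does \cref{realCross} finish the proof. If you want to salvage your plan, you must either prove the equivariant retraction/linearization statement in this non-proper setting (a substantial task, and not obviously true), or replace it by an orbit-by-orbit argument of the paper's type.
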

The next section will be devoted to proving this theorem and thus ending the proof of \cref{mainThm}.

\begin{claim}
The space $\Dist{O}^{\GGt,\chi}$ is trivial.
\end{claim}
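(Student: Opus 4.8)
The plan is to show $\Dist{O}^{\GGt,\chi}=0$ by using that $O=\GG$ is a \emph{homogeneous} $\GGt$-space, so that Frobenius descent (\cref{frobDesc}) reduces the computation to a single point stabilizer. First I would identify the $\GGt$-action on $O=\GG\subseteq\XX$ explicitly: $\GG$ acts on itself by $(g,h)\act x = gxh^{-1}$ and $\epsilon$ acts by $\swap$, which on $O$ sends $x\mapsto x^{-1}$ (since the point $(x,x^{-1})$ of the graph of the $\GG$-orbit through the identity corresponds to $x\in\GG$, or more invariantly one checks directly that $\swap$ is the inversion automorphism composed with the flip of the two factors). Thus $O\cong\GGt/\Stab$, and it suffices to pick one point — the identity $e\in\GG$ — and compute its stabilizer $\left(\GGt\right)_e$. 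One sees $\Stab[\GG]{e}=\Delta G=\{(g,g)\}$, and $\epsilon$ fixes $e$ (since $e^{-1}=e$), so $\left(\GGt\right)_e = \Delta G \rtimes \ZZ[2]$, with $\ZZ[2]$ acting on $\Delta G\cong G$ by inversion (conjugated through $\swap$).

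Next I would apply \cref{frobDesc} with $M=Z=O$ the homogeneous space itself, $\phi=\id$, and $E=\bb{C}_O$ the trivial line bundle carrying the character $\chi$. Since everything is homogeneous, $M_z$ is a single point, and the isomorphism reads
\[
\genfun{\mathrm{pt}, \Delta_{\GGt}\big|_{\left(\GGt\right)_e}\otimes \Delta_{\left(\GGt\right)_e}^{-1}}^{\left(\GGt\right)_e,\ \chi\cdot\Delta_{\GGt}\big|_{\left(\GGt\right)_e}\cdot\Delta_{\left(\GGt\right)_e}^{-1}} \xrightarrow{\sim} \genfun{O}^{\GGt,\chi}.
\]
The left-hand side is one-dimensional as a vector space (generalized functions on a point), and it is nonzero in the invariants exactly when the character $\chi\cdot\Delta_{\GGt}\big|_{\left(\GGt\right)_e}\cdot\Delta_{\left(\GGt\right)_e}^{-1}$ is trivial on $\left(\GGt\right)_e$. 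So the whole claim comes down to showing this character is \emph{nontrivial} on $\left(\GGt\right)_e$. Since $\GGt = \GG\rtimes\ZZ[2]$ is (up to the finite factor) a product of reductive groups, $\Delta_{\GGt}$ is trivial; likewise $\Delta_{\left(\GGt\right)_e}$ is trivial because $\Delta G$ is reductive and the $\ZZ[2]$-extension by an automorphism does not affect the modular character. Hence the character in question is just $\chi\big|_{\left(\GGt\right)_e}$, which is nontrivial precisely because $\epsilon\in\left(\GGt\right)_e$ (as $e$ is $\swap$-fixed) and $\chi(\epsilon)=-1$.

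The only real subtlety — and the step I would be most careful about — is the bookkeeping of modular characters and the verification that $\epsilon$ genuinely lies in the stabilizer of the chosen base point; one must make sure to pick a point of $O$ that is actually $\swap$-fixed (the identity matrix $I$ works, since $I^{-1}=I$) rather than a generic point whose orbit under $\ZZ[2]$ is free. Once the base point is the identity, the argument is immediate: $\chi$ restricted to a subgroup containing $\epsilon$ is nontrivial, all modular characters in sight are trivial by reductivity, Frobenius descent gives a one-dimensional space twisted by a nontrivial character, and a nontrivial character has no invariants, so $\Dist{O}^{\GGt,\chi}=\genfun{O}^{\GGt,\chi}=0$ (after accounting for the density bundle $\Dens{O}$, which is $\GGt$-equivariantly trivial since $\GGt$ acts by algebraic automorphisms on the group $\GG$, so distributions and generalized functions agree up to a trivial twist).
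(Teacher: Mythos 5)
Your argument collapses at its first step: you have misidentified the space $O$. In the paper, $O$ is the open subset $\{(x,y)\in\XX \mid x,y \text{ both invertible}\}\cong G\times G$ of $\XX=X\times X$, on which $\GG$ acts \emph{diagonally}, $(g,h)\act(x,y)=(gxh^{-1},gyh^{-1})$, and $\epsilon$ acts by exchanging the two factors; it is not the group $G$ with its two-sided action and inversion (that would be the open orbit of $X$ itself, a single copy). This $O$ is not a homogeneous $\GGt$-space: the stabilizer of $(x,y)$ is $\{(g,x^{-1}gx)\mid g\in Z_G(yx^{-1})\}$, generically a one-dimensional torus, so generic orbits are $5$-dimensional inside the $6$-dimensional $O$, and the orbits form a continuous family separated by the conjugacy class of $x^{-1}y$. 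Your chosen base point $(I,I)$ does have stabilizer $\Delta G\times\ZZ[2]$, but its orbit is only the $3$-dimensional diagonal $\{(x,x)\}$, so Frobenius descent (\cref{frobDesc}) at that point says nothing about distributions on the rest of $O$. Moreover, no purely formal character argument can suffice: by \cref{tildeIFF} the claim is equivalent to $\Dist{O}^{\GG}\subseteq\Dist{O}^{\swap}$, and $\Dist{O}^{\GG}$ is infinite-dimensional (in the coordinates $(u,v)=(x,x^{-1}y)$ the action becomes $(u,v)\mapsto(guh^{-1},hvh^{-1})$, so $\mu_G\boxtimes\tau$ is $\GG$-invariant for every conjugation-invariant distribution $\tau$ on $G$). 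What is really needed is the Gelfand--Kazhdan-type statement that conjugation-invariant distributions on $\PGL[F]{2}$ are invariant under inversion (equivalently, transpose-invariance as in \cite{GK} for $\GL[F]{2}$, together with the fact that every element of $\PGL[F]{2}$ is conjugate to its inverse) --- a genuine theorem, not a one-point stabilizer computation.

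For comparison, the paper's own proof goes the other way: it describes $\Dist{O}^{\GG}$ (asserting it is spanned by $\mu_G\boxtimes\mu_G$), observes that this is $\swap$-invariant, and concludes by \cref{tildeIFF}. Note that even there the identification of $\Dist{O}^{\GG}$ is delicate, precisely because the action is the diagonal two-sided one rather than translation of $\GG$ on itself (the examples $\mu_G\boxtimes\tau$ above show the invariant space is larger than a line), so in either route the substantive input is swap-invariance of all $\GG$-invariant distributions on $G\times G$, i.e.\ the classical Gelfand--Kazhdan argument. If you want to salvage your approach, you must supply that input; as written, your computation only establishes the much weaker statement for the single $\GGt$-orbit through $(I,I)$.
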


\begin{proof}
$\Dist{O}^{\GG}=\Dist{\GG}^{\GG}=\bb{C}\cdot\mu_{\GG}=\bb{C}\cdot \mu_{G}\boxtimes \mu_{G}$, where $\mu_{G}$ and $\mu_{\GG}$ are Haar measures of $G$ and $\GG$ respectively.

$\left(\mu_{G}\boxtimes \mu_{G}\right)^{\swap} = \mu_{G}\boxtimes \mu_{G}$, thus $\mu_{\GG}\in\Dist{O}^{\swap}$ and therefore $\Dist{O}^{\GG}\subseteq \Dist{O}^{\swap}$. By \cref{tildeIFF} $\Dist{O}^{\GGt}=0$.
\end{proof}

\begin{cor}
\label{distSuppOnC}
The inclusion maps
\[\begin{array}{c}
	\Dist{X_{1}}^{\GGt,\chi} \hookrightarrow \Dist{\XX}^{\GGt,\chi}		\\
	\Dist{C}^{\GGt,\chi} \hookrightarrow \Dist{U}^{\GGt,\chi}
\end{array}\]
are isomorphisms and in particular any distribution in $\Dist{\XX}^{\GGt,\chi}$ is supported on $X_{1}$ and any distribution in $\Dist{U}^{\GGt,\chi}$ is supported on $C$.
\end{cor}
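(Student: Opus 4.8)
The plan is to derive both isomorphisms from the dual of the exact sequence of \cref{exactSeq}, together with the triviality of $\Dist{O}^{\GGt,\chi}$ established in the claim just above.

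First I would handle the map $\Dist{X_{1}}^{\GGt,\chi}\hookrightarrow\Dist{\XX}^{\GGt,\chi}$. Since $O=\GG$ is open in $\XX$ with complement $X_{1}$, \cref{exactSeq} provides the short exact sequence $0\to\Sch{O}\to\Sch{\XX}\to\Sch{X_{1}}\to 0$ of Schwartz spaces. These carry the discrete topology, so $\Dist{-}=\Sch{-}^{*}$ is the full linear dual, and the linear dual of a short exact sequence of complex vector spaces is again short exact; hence we obtain $0\to\Dist{X_{1}}\to\Dist{\XX}\to\Dist{O}\to 0$, in which the first arrow is the push-forward (extension by zero) along the closed embedding $X_{1}\hookrightarrow\XX$ and the second is restriction to the open subset $O$. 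This sequence is $\GGt$-equivariant, so — using as before that $(\GGt,\chi)$-invariants is a left-exact functor — we get the exact sequence
\[0\to\Dist{X_{1}}^{\GGt,\chi}\to\Dist{\XX}^{\GGt,\chi}\to\Dist{O}^{\GGt,\chi}.\]
By the claim above $\Dist{O}^{\GGt,\chi}=0$, so $\Dist{X_{1}}^{\GGt,\chi}\to\Dist{\XX}^{\GGt,\chi}$ is an isomorphism; equivalently, every distribution in $\Dist{\XX}^{\GGt,\chi}$ restricts to $0$ on $O$ and is therefore supported on $X_{1}$.

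For the second inclusion I would run the identical argument one level down. Since $O$ is open in $\XX$ and contained in $U=\XX\setminus Z$ it is open in $U$, and $U\setminus O=X_{1}\setminus Z=C$ is closed in $U$. Applying \cref{exactSeq} to $O\subseteq U$ and dualizing gives $0\to\Dist{C}\to\Dist{U}\to\Dist{O}\to 0$; taking $(\GGt,\chi)$-invariants gives the exact sequence $0\to\Dist{C}^{\GGt,\chi}\to\Dist{U}^{\GGt,\chi}\to\Dist{O}^{\GGt,\chi}=0$, so $\Dist{C}^{\GGt,\chi}\to\Dist{U}^{\GGt,\chi}$ is an isomorphism and every distribution in $\Dist{U}^{\GGt,\chi}$ is supported on $C$.

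There is essentially no obstacle: the argument is a two-fold diagram chase, and the only ingredients beyond the preceding claim are the exactness of the linear dual of a short exact sequence of vector spaces and the left-exactness of $(\GGt,\chi)$-invariants, both of which are routine (and the latter was already used in the proof of the embedding theorem). All of the content of the corollary is supplied by the input $\Dist{O}^{\GGt,\chi}=0$.
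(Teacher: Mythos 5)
Your proposal is correct and follows essentially the same route as the paper: dualize the short exact sequences of Schwartz spaces from \cref{exactSeq} for $O\subseteq \XX$ and $O\subseteq U$, apply left-exactness of $\left(\GGt,\chi\right)$-invariants, and conclude from $\Dist{O}^{\GGt,\chi}=0$. The only difference is that you spell out the (routine) exactness of the dual sequence, which the paper leaves implicit.
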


\begin{proof}
By \cref{exactSeq} we have the exact sequences
\[0 \to \Dist{X_1} \to \Dist{\XX} \to \Dist{O} \to 0\]
\[0 \to \Dist{C} \to \Dist{U} \to \Dist{O} \to 0\]
Since $\left(\GGt,\chi\right)$-invariants is a left exact functor, we have the exact sequences
\[0 \to \Dist{X_1}^{\GGt,\chi} \to \Dist{\XX}^{\GGt,\chi} \to \Dist{O}^{\GGt,\chi}=0\]
\[0 \to \Dist{C}^{\GGt,\chi}  \to \Dist{U}^{\GGt,\chi} \to \Dist{O}^{\GGt,\chi}=0\]
\end{proof}

\begin{cor}
\cref{D} implies \cref{C}.
\end{cor}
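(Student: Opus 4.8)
The plan is to assemble the reductions already made in this section into a single chain of implications. By \cref{tildeIFF} applied to $Y=\XX$ (which is symmetric since $\swap$ preserves it), proving \cref{C} is equivalent to proving $\Dist{\XX}^{\GGt,\chi}=0$. So it suffices to deduce this vanishing from \cref{D}.

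First I would invoke the theorem that the restriction morphism $\Dist{\XX}^{\GGt,\chi}\to\Dist{U}^{\GGt,\chi}$ is an embedding; hence it is enough to show $\Dist{U}^{\GGt,\chi}=0$. Next, by \cref{distSuppOnC}, the inclusion $\Dist{C}^{\GGt,\chi}\hookrightarrow\Dist{U}^{\GGt,\chi}$ is an isomorphism, so $\Dist{U}^{\GGt,\chi}=0$ is equivalent to $\Dist{C}^{\GGt,\chi}=0$. But this last statement is precisely \cref{D} (the vanishing of $\left(\GGt,\chi\right)$-invariant distributions on $C$). Tracing the equivalences backward: \cref{D} gives $\Dist{C}^{\GGt,\chi}=0$, hence $\Dist{U}^{\GGt,\chi}=0$, hence $\Dist{\XX}^{\GGt,\chi}=0$ by the embedding, hence $\Dist{\XX}^{\GG}\subseteq\Dist{\XX}^{\swap}$ by \cref{tildeIFF}, which is \cref{C}.

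There is no real obstacle here: the content of the corollary is entirely bookkeeping, and every arrow in the chain has been established earlier in the section. The only point requiring a word of care is notational consistency of the $\chi$-twist — one should note that the $\GGt$-action on each of $\XX$, $U$ and $C$ is the one in which $\ZZ[2]$ acts by $\swap$, so that the character $\chi$ (trivial on $\GG$, sending $\epsilon$ to $-1$) is the same throughout, and the left-exactness of $(\GGt,\chi)$-invariants used in \cref{distSuppOnC} applies verbatim. All the genuine difficulty is deferred to the proof of \cref{D} in the next section.
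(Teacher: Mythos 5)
Your proposal is correct and matches the paper's intended argument exactly: the corollary is just the assembly of \cref{tildeIFF}, the embedding $\Dist{\XX}^{\GGt,\chi}\hookrightarrow\Dist{U}^{\GGt,\chi}$, and the isomorphism of \cref{distSuppOnC}, which is precisely the chain you give (noting, as you implicitly do, that the $\chi$ is omitted by typo in the statement of \cref{D}).
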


\section{The Cross Method}
\label{crossMethod}
Let 
$\Pi := \left\{
	\left(
		\alpha E_{1,2} + E_{2,1} ,
		E_{1,2} + \beta E_{2,1}
	\right)
	\,|\,
	\alpha,\beta\in F
\right\}$
be a plane in $\XX$.

\subsection{The Geometry of $\Pi$ and its relation to $U$}
We will identify $\Pi$ with $\Aff[F]{2}$ under the identification 
\[
	\left ( \alpha, \beta \right )
	\mapsto \left (
		\left [ \begin{array}{cc}
			0 & \alpha \\
			1 & 0
		\end{array} \right ] ,
		\left [ \begin{array}{cc}
			0 & 1\\
			\beta & 0
		\end{array} \right ]
	\right )
\]
Under this identification, let $C_{\Pi}\subseteq\Aff[F]{2}$ be the subset corresponding to $\left\{xy=0\right\}\subseteq \Aff[F]{2}$, $P_{\Pi}$ be the singleton containing the origin and $W_{\Pi}=C_{\Pi}\setminus P_{\Pi}$.

$\Gm$ acts on $\Pi$ by 
\[\lambda\act\left(\alpha,\beta\right)=\left (\lambda\alpha, \lambda^{-1}\beta\right )\]
we expand the action to $\Gmt=\Gm\rtimes \ZZ[2]$, where $\ZZ[2]$ acts on $\Gm$ as inverse, by
\[\epsilon\act\left(\alpha,\beta\right)=\left(\beta,\alpha\right)\]
This action comes from the original $\GGt$ action on $\XX$, i.e. we have an embedding $\Gmt[F]\hookrightarrow \GGt$ such that $\Gmt[F]$ preserves $\Pi$ and acts as above. 
We embed $\Gmt[F]$ into $\GGt$ by 
\[\begin{aligned}
\left (\lambda, 1\right ) \mapsto \left(t_{\lambda},I, 1 \right)		\\
\left (\lambda, \epsilon\right ) \mapsto \left(S t_{\lambda},S, \epsilon \right)	
\end{aligned}\]	 
where $t_{\lambda}:= \left[ \begin{array}{cc} \lambda & 0 \\ 0 & 1 \end{array} \right]$, $S=\left [\begin{array}{cc} 0 & 1 \\ 1 & 0 \end{array}\right ]$. This is indeed an inclusion as $t_{\lambda\mu}=t_{\lambda}t_{\mu}$ and $St_{\lambda} = t_{\lambda^{-1}} S$.
Under this inclusion:
\[ \begin{aligned}
	\lambda \act \left (\alpha, \beta\right )
	& = 	\left ( t_{\lambda}, I \right ) 
	\act \left (
		\left [ \begin{array}{cc}
			0 & \alpha \\
			1 & 0
		\end{array} \right ] ,
		\left [ \begin{array}{cc}
			0 & 1 \\
			\beta & 0
		\end{array} \right ]
	\right )		\\
	& = \left (
		\left [ \begin{array}{cc}
			\lambda & 0 \\
			0 & 1
		\end{array} \right ] ,
		\left [ \begin{array}{cc}
			1 & 0 \\
			0 & 1
		\end{array} \right ]
	\right ) \act \left (
		\left [ \begin{array}{cc}
			0 & \alpha \\
			1 & 0
		\end{array} \right ],
		\left [ \begin{array}{cc}
			0 & 1 \\
			\beta & 0
		\end{array} \right ]
	\right )								\\
	& = \left (
		\left [ \begin{array}{cc}
			0 & \lambda \alpha \\
			1 & 0
		\end{array} \right ],
		\left [ \begin{array}{cc}
			0 & \lambda \\
			\beta & 0
		\end{array} \right ]
	\right )								\\
	& = \left (
		\left [ \begin{array}{cc}
			0 & \lambda \alpha \\
			1 & 0
		\end{array} \right ],
		\left [ \begin{array}{cc}
			0 & 1 \\
			\lambda^{-1} \beta & 0
		\end{array} \right ]
	\right )								
	= \left ( \lambda \alpha, \lambda^{-1} \beta \right )	\\
	\epsilon\act\left(\alpha,\beta\right)
	& = \left(S, S, \epsilon\right)
	\act \left(
		\left[\begin{array}{cc}
			0 & \alpha \\
			1 & 0
		\end{array}\right], 
		\left[\begin{array}{cc}
			0 & 1 \\
			\beta & 0
		\end{array}\right]
	\right)				\\
	& = \left(
		\left[\begin{array}{cc}
			0 & 1 \\
			1 & 0
		\end{array}\right ], 
		\left [\begin{array}{cc}
			0 & 1 \\
			1 & 0 
		\end{array}\right ]
		, \epsilon
	\right) \act \left(
		\left [ \begin{array}{cc}
			0 & \alpha \\
			1 & 0
		\end{array} \right ],
		\left [ \begin{array}{cc}
			0 & 1 \\
			\beta & 0 
		\end{array}\right ]
	\right)			\\
	& = \left(
		\left [ \begin{array}{cc}
			0 & \beta \\
			1 & 0
		\end{array}\right ],
		\left [ \begin{array}{cc}
			0 & 1 \\
			\alpha & 0
		\end{array}\right ]
	\right ) = \left(\beta, \alpha\right)		
\end{aligned} \]

Let $\Gmt[F]$ act on $\GGt$ by right multiplication. For $\Gmt[F]$-invariant subset $\Lambda\subseteq \Pi$, denote by $\widehat{\Lambda}:=\Lambda\times_{\Gmt[F]}\GGt$.
Let $a: \widehat{\Pi} \to \XX$ be given by the action of $\GGt$ on $\Pi\subseteq \XX$. It is obvious that $a$ does not depend on the choice of a representative and is $\GGt$-equivariant.

\begin{claim}
The image of $a$ lands in $U$. Moreover:
\[\begin{aligned}
	&a\left(\widehat{P_{\Pi}}\right)\subseteq P	\\
	&a\left(\widehat{W_{\Pi}}\right)\subseteq W	\\
	&a\left(\widehat{\Pi\setminus C_{\Pi}}\right)\subseteq O
\end{aligned}\]
\end{claim}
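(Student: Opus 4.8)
The plan is to use the $\GGt$-equivariance of $a$ to reduce the whole statement to a short computation inside $\XX$ on $\Pi$ itself. By construction a point of $\widehat{\Lambda} = \Lambda\times_{\Gmt[F]}\GGt$ is a class $[\ell,\tilde g]$ with $\ell\in\Lambda$ and $\tilde g\in\GGt$, and $a[\ell,\tilde g] = \tilde g\act\ell$ (the $\GGt$-action on $\Pi\subseteq\XX$); this is well defined, $\GGt$-equivariant, and has image exactly the $\GGt$-saturation $\GGt\act\Lambda\subseteq\XX$. Since $O$, $P = O_{E_{1,2},E_{2,1}}$ and $W = O_{I,E_{1,1}}$ are $\GGt$-orbits, hence $\GGt$-invariant, while (in $\XX$) $U = O\cup P\cup W$ and $\widehat{\Pi} = \widehat{P_\Pi}\sqcup\widehat{W_\Pi}\sqcup\widehat{\Pi\setminus C_\Pi}$, it suffices to prove the three set inclusions, read inside $\XX$ under the identification of $\Pi$ with $\Aff[F]{2}$:
\[
P_\Pi\subseteq P,\qquad W_\Pi\subseteq W,\qquad \Pi\setminus C_\Pi\subseteq O ;
\]
applying $\GGt$ to these and using invariance of $P,W,O$ gives the ``Moreover'' part, and taking the union gives $a(\widehat{\Pi})\subseteq U$.

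The first and third inclusions are immediate. Under the identification $P_\Pi$ is the single point $\left(E_{2,1},E_{1,2}\right) = \swap\act\left(E_{1,2},E_{2,1}\right)$, and $\swap\in\GGt$, so it lies in $P$. For the third, a point $\left(\alpha,\beta\right)$ with $\alpha\beta\neq 0$ corresponds to the pair $\left(\left[\begin{smallmatrix}0&\alpha\\1&0\end{smallmatrix}\right],\left[\begin{smallmatrix}0&1\\\beta&0\end{smallmatrix}\right]\right)$, whose two matrices have determinants $-\alpha$ and $-\beta$; both are invertible, so the pair lies in $O=\GG$.

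For the middle inclusion I would first note that $W_\Pi$ — the two punctured coordinate axes — is a single $\Gmt[F]$-orbit, since $\Gm[F]=F^{\times}$ acts simply transitively on each punctured axis and the $\ZZ[2]$-factor interchanges them. Hence it suffices to check one representative, say $\left(1,0\right)$, which corresponds to $\left(S,E_{1,2}\right)$. Acting by $\left(S,I\right)\in\GG$ gives $\left(S,I\right)\act\left(S,E_{1,2}\right) = \left(I, SE_{1,2}\right) = \left(I,E_{2,2}\right)$ (using $S^{2}\equiv I$ in $\PGL{2}$), and conjugating by $\left(S,S\right)$ gives $\left(I, SE_{2,2}S\right) = \left(I,E_{1,1}\right)$; therefore $\left(S,E_{1,2}\right)\in O_{I,E_{1,1}} = W$.

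The verification is essentially bookkeeping: every matrix manipulation uses only that scalar matrices are trivial in $\PGL{2}$, so that antidiagonal matrices square to the identity. The one point that needs care — and the only place a naive argument could stumble — is that the $\ZZ[2]$-factor of $\Gmt[F]$ sits inside $\GGt$ not as bare $\swap$ but as the twisted element $\left(S,S,\epsilon\right)$ (the image of $\left(1,\epsilon\right)$ under the embedding fixed above); this is precisely why $W_\Pi$, although it is a union of two lines, is one $\Gmt[F]$-orbit so that a single representative suffices. I do not expect any genuine obstacle beyond keeping the fibered-product and twisting conventions straight.
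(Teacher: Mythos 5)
Your proposal is correct and follows essentially the same route as the paper: use $\GGt$-equivariance of $a$ and the $\GGt$-invariance of $O$, $P$, $W$ to reduce everything to checking where the points of $P_{\Pi}$, $W_{\Pi}$ and $\Pi\setminus C_{\Pi}$ themselves sit inside $\XX$, and then verify this by inspecting the corresponding pairs of matrices (the paper simply says the membership ``follows by the definition of $P,W,O$'', which your explicit computations for $\left(E_{2,1},E_{1,2}\right)$ and $\left(S,E_{1,2}\right)$ make precise). Your use of the fact that $W_{\Pi}$ is a single $\Gmt[F]$-orbit to reduce to one representative is only a minor packaging difference, not a different argument.
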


\begin{proof}
It is enough to prove the second part of the claim.
The action of $\ZZ[2]$ swaps elements, and as $P$, $W$ and $O$ are symmetric it is enough to prove that 
\[\begin{aligned}
	& a\left(
		P_{\Pi}\times_{\Gm[F]}\left(\GG\right)
	\right)	\subseteq P							\\
	& a\left(
		W_{\Pi}\times_{\Gm[F]}\left(\GG\right)
	\right)	\subseteq W							\\
	& a\left(
		\left(\Pi\setminus C_{\Pi}\right)
		\times_{\Gm[F]}\left(\GG\right)
	\right)	\subseteq O	
\end{aligned}\]						
Let $\left(\alpha,\beta\right)\in\Pi$ and $\left(g,h\right)\in\GG$.
\[
	a\left (
		\left (
			\alpha, \beta
		\right ), \left (
			g, h
		\right )
	\right ) 
	 = \left (
	  	g \left [
	  		\begin{array}{cc}
	  			0 & \alpha \\
	  			1 & 0
	  		\end{array}
	  	\right ] h^{-1}, 
	  	g \left [
	  		\begin{array}{cc}
	  			0 & 1 \\
	  			\beta & 0	  		
	  		\end{array}
	  	\right ] h^{-1}
	\right )
\]
The claim follows by the definition of $P,W,O$.
\end{proof}

As an easy consequence we get that $\Pi\subseteq U$.

\subsection{Restrictions from $P$ and $W$}
\label{restPW}
As the $\GGt$-equivariant, smooth maps $a\big|_{\widehat{P_{\Pi}}}$ and $a\big|_{\widehat{W_{\Pi}}}$ land in the $\GGt$-orbits $P$ and $W$ respectively, they are submersions.
Let $E_{P}$ and $E_{W}$ be $\GGt$-equivariant vector bundles over $P$ and $W$ respectively.
Denote the restrictions
$a_{P}:=a\big|_{\widehat{P_{\Pi}}}$,
$a_{W}:=a\big|_{\widehat{W_{\Pi}}}$.
Using Harish-Chandra's submersion principle, \cref{HCS}, we get surjective continuous linear maps
\[\begin{aligned}
& \left(a_{P}\right)_{*}: 
	\Sch{
		\widehat{P_{\Pi}},	
		a_{P}^{*}E_{P}\otimes \Dens{\widehat{P_{\Pi}}}
	} \twoheadrightarrow \Sch{
		P, 
		E_{P} \otimes \Dens{P}
	}											\\
& \left(a_{W}\right)_{*}:
	\Sch{
		\widehat{W_{\Pi}},	
		a_{W}^{*}E_{W}\otimes \Dens{\widehat{W_{\Pi}}}
	} \twoheadrightarrow \Sch{
		W, 
		E_{W} \otimes \Dens{W}
	}   											
\end{aligned}\]

that satisfy:
\[\begin{aligned}
	& \int_{P} \left\langle 
		f, 
		\left(a_{P}\right)_{*}\mu
	\right\rangle
	= \int_{\widehat{P_{\Pi}} }	\left\langle
		a_{P}^{*}f, 
		\mu 
	\right\rangle 
	\ \forall f\in \Sch{P, E_{P}}, 
	\mu\in \Sch{
		\widehat{P_{\Pi}},	
		a_{P}^{*}E_{P}\otimes \Dens{\widehat{P_{\Pi}}}
	}													\\
	& \int_{W} \left\langle 
		f, 
		\left(a_W\right)_{*}\mu
	\right\rangle
	= \int_{\widehat{W_{\Pi}} }	\left\langle
		a_{W}^{*}f, 
		\mu 
	\right\rangle 
	\ \forall f\in \Sch{W, E_{W}}, 
	\mu\in \Sch{
		\widehat{W_{\Pi}},	
		a_{W}^{*}E_{W}\otimes \Dens{\widehat{W_{\Pi}}}
	}
\end{aligned}\]
	
Taking the dual maps we get the injections:
\[\begin{aligned}
& a_{P}^{*}:
	\genfun{
		P, E_{P}
	} \hookrightarrow \genfun{
		\widehat{P_{\Pi}}, a_{P}^{*}E_{P}
	}									\\
& a_{W}^{*}:
	\genfun{
		W, E_{W}
	} \hookrightarrow \genfun{
		\widehat{W_{\Pi}}, a_{W}^{*}E_{W}
	}
\end{aligned}\]
Which, by its definition is a (and whence the only) continuous extension of the push forward of smooth functions.

These maps are $\GGt$-equivariant and $\left(\GGt,\chi\right)$-invariants is a left exact functor, we have injective maps
\[\begin{aligned}
& a_{P}^{*}:
	\genfun{
		P, E_{P}
	}^{\GGt,\chi} \hookrightarrow \genfun{
		\widehat{P_{\Pi}}, a_{P}^{*}E_{P}
	}^{\GGt,\chi} 							\\
& a_{W}^{*}:
	\genfun{
		W, E_{W}
	}^{\GGt,\chi} \hookrightarrow \genfun{
		\widehat{W_{\Pi}}, a_{W}^{*}E_{W}
	}^{\GGt,\chi}
\end{aligned}\]

We look at the projection maps 
\[\widehat{P_{\Pi}}\to \sfrac{\GGt}{\Gmt[F]}, \widehat{C_{\Pi}}\to \sfrac{\GGt}{\Gmt[F]}\]
Which have fibers $P_{\Pi}$ and $C_{\Pi}$ respectively. Using Frobenius Descent (\cref{frobDesc}) we have canonical isomorphisms
\[\begin{aligned}
	&\Fr_{P}: \genfun{P_{\Pi}, \left(a_{P}^{*}E_{P}\right)\big|_{P_{\Pi}}}^{\Gmt[F],\chi}
	\xrightarrow{\sim} \genfun{\widehat{P_{\Pi}}, a_{P}^{*}E_{P}}^{\GGt,\chi}		\\
	&\Fr_{W}: \genfun{W_{\Pi}, \left(a_{W}^{*}E_{W}\right)\big|_{W_{\Pi}}}^{\Gmt[F],\chi}
	\xrightarrow{\sim} \genfun{\widehat{W_{\Pi}}, a_{W}^{*}E_{W}}^{\GGt,\chi}
\end{aligned}\]

Composing $a_{W}^{*}$ and $a_{P}^{*}$ with the isomorphisms given by Frobenius decent we get injective "restriction" maps
\[\begin{aligned}
	&\genfun{
		P, a_{P}^{*}E_{P}
	}^{\GGt,\chi} \hookrightarrow \genfun{
		P_{\Pi}, \left(a_{P}^{*}E_{P}\right)\big|_{P_{\Pi}}
	}^{\Gmt[F],\chi} 							\\
	&\genfun{
		W, a_{W}^{*}E_{W}
	}^{\GGt,\chi} \hookrightarrow \genfun{
		W_{\Pi}, \left(a_{W}^{*}E_{W}\right)\big|_{W_{\Pi}}
	}^{\Gmt[F],\chi}
\end{aligned}\]

Let $\iota_{P}: P_{\Pi}\hookrightarrow \widehat{P_{\Pi}}$ and $\iota_{W}: W_{\Pi}\hookrightarrow \widehat{W_{\Pi}}$ be the natural inclusions.

\begin{claim}
$\left(a_{P}^{*}E_{P}\right)\big|_{P_{\Pi}}=E_{P}\big|_{P_{\Pi}}$,
$\left(a_{W}^{*}E_{W}\right)\big|_{W_{\Pi}}=E_{W}\big|_{W_{\Pi}}$,
\end{claim}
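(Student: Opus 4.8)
The plan is to reduce both identities to functoriality of the pullback of vector bundles, once the composite $a_{P}\circ\iota_{P}$ is identified with the inclusion $P_{\Pi}\hookrightarrow P$ (and similarly for $W$).

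First I would unwind $\iota_{P}$. By construction $\widehat{P_{\Pi}}=P_{\Pi}\times_{\Gmt[F]}\GGt$, and $\iota_{P}$ is the tautological section sending $x\in P_{\Pi}$ to the class $[x,e]$ of the pair $(x,e)$. Since $a$ is defined by the $\GGt$-action on $\Pi\subseteq\XX$, that is $a\left([x,g]\right)=g\act x$, we get $a_{P}\left(\iota_{P}(x)\right)=e\act x=x$, which by the preceding claim indeed lies in $P$. Hence $a_{P}\circ\iota_{P}$ is exactly the inclusion $j_{P}\colon P_{\Pi}\hookrightarrow P$ of the (single) point $P_{\Pi}$ into its orbit; the identical computation gives $a_{W}\circ\iota_{W}=j_{W}\colon W_{\Pi}\hookrightarrow W$.

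Then I would simply invoke functoriality: restriction of a bundle to a submanifold is pullback along the inclusion, so
\begin{align*}
\left(a_{P}^{*}E_{P}\right)\big|_{P_{\Pi}}
&= \iota_{P}^{*}\left(a_{P}^{*}E_{P}\right)
 = \left(a_{P}\circ\iota_{P}\right)^{*}E_{P}	\\
&= j_{P}^{*}E_{P}
 = E_{P}\big|_{P_{\Pi}},
\end{align*}
and likewise $\left(a_{W}^{*}E_{W}\right)\big|_{W_{\Pi}}=E_{W}\big|_{W_{\Pi}}$. The only thing to be careful about is the bookkeeping in the first step — pinning down that $\iota_{P}$ really is the section $x\mapsto[x,e]$ and that $a$ restricted to its image is the identity on $P_{\Pi}$ viewed inside $P$; after that there is no genuine obstacle, the claim being a formal consequence of the definitions, with no analytic content.
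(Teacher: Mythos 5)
Your proposal is correct and follows essentially the same route as the paper: identify $a_{P}\circ\iota_{P}$ (resp. $a_{W}\circ\iota_{W}$) with the inclusion $P_{\Pi}\hookrightarrow P$ (resp. $W_{\Pi}\hookrightarrow W$) and conclude by functoriality of pullback of vector bundles. Your explicit unwinding of $\iota_{P}$ as $x\mapsto[x,e]$ just spells out a step the paper leaves implicit.
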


\begin{proof}
Let $i_{1}:P_{\Pi}\hookrightarrow P$. Then $i_{P}= a_{P}\circ\iota_{P}$.
Thus
\[
E_{P}\big|_{P_{\Pi}}
= i_{P}^{*} E_{P}
= \iota_{P}^{*}\left(
	a_{P}^{*}\left(
		E_{P}
	\right)
\right)
= \left(a_{P}^{*}E_{P}\right)\big|_{P_{\Pi}}
\]
An analogous argument works for $W$.
\end{proof}

\begin{cor}
\label{corRestFromPW}
We have injective "restriction" maps
\[\begin{aligned}
	& \genfun{
		P, a_{P}^{*}E_{P}
	}^{\GGt,\chi} \hookrightarrow \genfun{
		P_{\Pi}, E_{P}\big|_{P_{\Pi}}
	}^{\Gmt[F],\chi} 							\\
	& \genfun{
		W, a_{W}^{*}E_{W}
	}^{\GGt,\chi} \hookrightarrow \genfun{
		W_{\Pi}, E_{W}\big|_{W_{\Pi}}
	}^{\Gmt[F],\chi}
\end{aligned}\]
\end{cor}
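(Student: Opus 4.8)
The plan is to obtain this corollary essentially for free, by splicing together the two ingredients just assembled: the injective ``restriction'' maps built in \cref{restPW} and the bundle identification of the Claim immediately above.

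First I would recall the chain of maps from \cref{restPW}. Harish-Chandra's submersion principle (\cref{HCS}), applied to the submersions $a_P$ and $a_W$, gives surjections $(a_P)_*$ and $(a_W)_*$ on the relevant spaces of Schwartz densities; taking duals produces the injections $a_P^*\colon\genfun{P,E_P}\hookrightarrow\genfun{\widehat{P_\Pi},a_P^*E_P}$ and $a_W^*\colon\genfun{W,E_W}\hookrightarrow\genfun{\widehat{W_\Pi},a_W^*E_W}$, which are $\GGt$-equivariant and hence restrict to injections on $(\GGt,\chi)$-invariants. Composing with the inverse Frobenius-descent isomorphisms $\Fr_P^{-1}$ and $\Fr_W^{-1}$ (\cref{frobDesc}) attached to the projections $\widehat{P_\Pi}\to\sfrac{\GGt}{\Gmt[F]}$ and $\widehat{W_\Pi}\to\sfrac{\GGt}{\Gmt[F]}$, one gets injections
\[
\genfun{P,a_P^*E_P}^{\GGt,\chi}\hookrightarrow\genfun{P_\Pi,(a_P^*E_P)\big|_{P_\Pi}}^{\Gmt[F],\chi},\qquad
\genfun{W,a_W^*E_W}^{\GGt,\chi}\hookrightarrow\genfun{W_\Pi,(a_W^*E_W)\big|_{W_\Pi}}^{\Gmt[F],\chi}.
\]
Then I would invoke the preceding Claim, which asserts $(a_P^*E_P)\big|_{P_\Pi}=E_P\big|_{P_\Pi}$ and $(a_W^*E_W)\big|_{W_\Pi}=E_W\big|_{W_\Pi}$; substituting these equalities into the targets above yields precisely the two injective maps in the statement.

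The only point I would take care over is that the identification from the Claim is one of $\Gmt[F]$-equivariant vector bundles, so that it is compatible with the $(\Gmt[F],\chi)$-invariants functor and induces a well-defined isomorphism $\genfun{P_\Pi,(a_P^*E_P)|_{P_\Pi}}^{\Gmt[F],\chi}\xrightarrow{\sim}\genfun{P_\Pi,E_P|_{P_\Pi}}^{\Gmt[F],\chi}$, and likewise for $W$. This is automatic: the equality is literally $\iota_P^*a_P^*E_P=i_P^*E_P$ with $i_P=a_P\circ\iota_P$ a $\Gmt[F]$-equivariant map. So there is no genuine obstacle in this corollary; it is a formal bookkeeping step, and the real content of the cross method — comparing the images of these two restriction maps inside $\genfun{C_\Pi,\cdot}^{\Gmt[F],\chi}$ and deducing \cref{D} from \cref{realCross} — still lies ahead.
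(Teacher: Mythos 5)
Your proposal is correct and follows essentially the same route as the paper: the corollary is exactly the composition of the dual maps $a_P^*$, $a_W^*$ from \cref{HCS}, restricted to $\left(\GGt,\chi\right)$-invariants, with the Frobenius-descent isomorphisms of \cref{frobDesc}, followed by substituting the bundle identifications $\left(a_{P}^{*}E_{P}\right)\big|_{P_{\Pi}}=E_{P}\big|_{P_{\Pi}}$ and $\left(a_{W}^{*}E_{W}\right)\big|_{W_{\Pi}}=E_{W}\big|_{W_{\Pi}}$ from the preceding claim. Your remark that the identification is $\Gmt[F]$-equivariant (since $i_P = a_P\circ\iota_P$) and hence compatible with taking $\left(\Gmt[F],\chi\right)$-invariants is precisely the bookkeeping the paper leaves implicit.
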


\begin{cor}
\label{bundlesOnPTriv}
Any $\GGt$-equivariant vector bundle $E_{P}$ over $P$ such that $E_{P}\big|_{P_{\Pi}}$ is $\ZZ[2]$-invariant satisfies:
\[\Dist{P,E_{P}}^{\GGt,\chi}=0\]
\end{cor}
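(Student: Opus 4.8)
The plan is to deduce this corollary from \cref{corRestFromPW} applied to $E_P$, together with the explicit description of $P$ and $P_\Pi$. Recall $P = O_{E_{1,2},E_{2,1}}$ is a single $\GGt$-orbit and $P_\Pi$ is the singleton $\{(0,0)\}\subseteq \Pi$, which is the $\Gmt[F]$-fixed point of the cross. So $E_P\big|_{P_\Pi}$ is just a one-dimensional $\Gmt[F]$-representation at a point, i.e. a character of $\Gmt[F]$ (in fact of the component group, since the connected $\Gm[F]$ acts trivially on a fiber over a fixed point up to the bundle structure). By hypothesis this character is $\ZZ[2]$-invariant, meaning the non-trivial element $\epsilon$ of $\ZZ[2]$ acts trivially on the fiber $E_P\big|_{P_\Pi}$.

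First I would apply \cref{corRestFromPW}: it gives an injection
\[
\genfun{P, a_P^* E_P}^{\GGt,\chi} \hookrightarrow \genfun{P_\Pi, E_P\big|_{P_\Pi}}^{\Gmt[F],\chi}.
\]
Next I would compute the right-hand side directly. Since $P_\Pi$ is a single point, $\genfun{P_\Pi, E_P\big|_{P_\Pi}}$ is the one-dimensional space $E_P\big|_{P_\Pi} \otimes \Dens{P_\Pi}^{?}$ — but over a point the density bundle is canonically trivial, so it is just the fiber, a one-dimensional vector space carrying the action of $\Gmt[F]$ through the (component group of the) $\Gmt[F]$-action on the fiber. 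Taking $(\Gmt[F],\chi)$-invariants: an element is fixed by $\Gm[F]$ automatically (connected group acting on a line over a fixed point — or it acts by some character, but $\Gm[F]$ has no finite-order issues here; in any case the relevant obstruction is the $\ZZ[2]$ part), and must transform under $\epsilon$ by $\chi(\epsilon) = -1$. But by hypothesis $\epsilon$ acts trivially on $E_P\big|_{P_\Pi}$, so the only vector transforming by $-1$ is $0$. Hence $\genfun{P_\Pi, E_P\big|_{P_\Pi}}^{\Gmt[F],\chi} = 0$, and by the injection $\genfun{P, a_P^* E_P}^{\GGt,\chi} = 0$. Finally I would translate back from generalized sections to distributions: $\Dist{P,E_P} = \genfun{P, E_P^* \otimes \Dens{P}}$, and applying the vanishing to the bundle $E_P^* \otimes \Dens{P}$ in place of $E_P$ (checking that if $E_P\big|_{P_\Pi}$ is $\ZZ[2]$-invariant then so is $(E_P^* \otimes \Dens{P})\big|_{P_\Pi}$, since $\Dens{P}\big|_{P_\Pi}$ and duals of $\ZZ[2]$-invariant lines are again $\ZZ[2]$-invariant) gives $\Dist{P,E_P}^{\GGt,\chi} = 0$.

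The main obstacle I anticipate is the bookkeeping around the density bundles and the passage between $\genfun$ and $\Dist$: one must be careful that $\Dens{P}\big|_{P_\Pi}$, where $P_\Pi$ is a point inside the positive-dimensional orbit $P$, carries a potentially non-trivial $\Gmt[F]$-action coming from how the normal directions are scaled, and that $\epsilon$ acts on it trivially. Since $\epsilon\act(\alpha,\beta) = (\beta,\alpha)$ fixes the origin and its differential there is the swap of coordinates (determinant $-1$... but this is on $T_{(0,0)}\Pi$, whereas $\Dens{P}$ involves $T$ of the orbit $P$, not of $\Pi$) — so one genuinely has to check whether $\epsilon$ acts on $\Dens{P}\big|_{P_\Pi}$ by $+1$; I expect it does because the orbit $P$ is $\ZZ[2]$-symmetric and $\epsilon$ has a fixed point there with an orientation-type triviality, but this is the step requiring actual verification rather than formal nonsense. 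Alternatively, one could absorb this by simply noting that whatever $\genfun{P_\Pi, E'\big|_{P_\Pi}}^{\Gmt[F],\chi}$ is, it vanishes as soon as $\epsilon$ acts trivially on the relevant fiber, and the hypothesis plus the $\ZZ[2]$-symmetry of $P$ propagate that triviality through the natural operations.
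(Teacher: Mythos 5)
Your proposal is correct and follows essentially the same route as the paper: embed $\Dist{P,E_P}^{\GGt,\chi}=\genfun{P,E_P^*\otimes\Dens{P}}^{\GGt,\chi}$ via the restriction map of \cref{corRestFromPW} into generalized sections over the point $P_\Pi$, note that $\ZZ[2]$ acts trivially on the relevant fiber (hypothesis on $E_P\big|_{P_\Pi}$ together with invariance of $\Dens{P}\big|_{P_\Pi}$), and conclude vanishing because $\chi(\epsilon)=-1$. The step you flagged is fine and is exactly how the paper dispatches it: since $\epsilon$ is an involution fixing the point $P_\Pi$, its differential on $T_{P_\Pi}P$ satisfies $\left|\det d\epsilon\right|=1$, so it acts trivially on the density fiber $\Dens{P}\big|_{P_\Pi}$ (the rank-one issue you raise is immaterial, as trivial $\epsilon$-action on a fiber of any rank forces the $\chi$-equivariants to vanish).
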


\begin{proof}
We look at
\[\Dist{
	P,E_{P}
}^{\GGt,\chi} 
= \genfun{
	P,
	E_{P}^{*}
	\otimes \Dens{P}
}^{\GGt,\chi}\]
which embeds into 
\[\genfun{
	P_{\Pi}, 
	E_{P}\big|_{
		P_{\Pi}
	} \otimes \Dens{P}\big|_{
		P_{\Pi}
	}
}^{\Gmt[F],\chi}\]
As $P_{\Pi}$ is a point, $\Dens{P}\big|_{	P_{\Pi}	}$ is $\ZZ[2]$-invariant. I.e. $\genfun{
	P_{\Pi}, 
	E_{P}\big|_{
		P_{\Pi}
	} \otimes \Dens{P}\big|_{
		P_{\Pi}
	}
}$ is a $\Gmt[F]$-representation on which $\ZZ[2]$ acts trivially. As $\chi$ is a non-trivial character on $\ZZ[2]$:
\[\genfun{P_{\Pi}, E_{P}\big|_{P_{\Pi}} \otimes\Dens{P}\big|_{	P_{\Pi}	}}^{\Gmt[F],\chi}=0\]
\end{proof}

Let $\eta$ be the composition
\[
\begin{aligned}
	\Dist{C}^{\GGt,\chi}
	& \to \Dist{W}^{\GGt,\chi} = \genfun{W,\Dens{W}}^{\GGt,\chi}		\\
	& \hookrightarrow \genfun{W_{\Pi}, \Dens{W}\big|_{W_{\Pi}}}^{\Gmt[F], \chi}
\end{aligned}\]
where the first map is just the regular restriction of distributions and the second map is the restriction map built in \cref{corRestFromPW}.

\begin{claim}
$\eta$ is injective.
\end{claim}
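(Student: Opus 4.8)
The plan is to show that the kernel of $\eta$ is trivial by using the fact (from \cref{distSuppOnC}) that any $\xi\in\Dist{C}^{\GGt,\chi}$ is already controlled by its behavior near $P$ once we know its restriction to $W$ vanishes. Concretely, suppose $\xi\in\Dist{C}^{\GGt,\chi}$ with $\eta(\xi)=0$. The first map in the definition of $\eta$ is the restriction of distributions along the open inclusion $W\hookrightarrow C$ (recall $C = W\cup P$ with $P=O_{E_{1,2},E_{2,1}}$ closed in $C$, so $W$ is open in $C$). The second map is the injective restriction map of \cref{corRestFromPW}. Since that second map is injective, $\eta(\xi)=0$ forces the restriction $\xi\big|_{W}\in\Dist{W}^{\GGt,\chi}$ to be zero. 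Hence $\xi$ is supported on the closed subset $P\subseteq C$, i.e. $\xi\in\Dist{P}^{\GGt,\chi}$ via the exact sequence $0\to\Dist{P}\to\Dist{C}\to\Dist{W}\to 0$ and left-exactness of $(\GGt,\chi)$-invariants.

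Next I would argue that $\Dist{P}^{\GGt,\chi}=0$. This is exactly the content available from \cref{bundlesOnPTriv} applied to the trivial bundle $E_P=\bb{C}_P$ (or, more directly, from the computation in \cref{corRestFromPW} that $\Dist{P}^{\GGt,\chi}$ embeds into $\genfun{P_\Pi,\Dens{P}\big|_{P_\Pi}}^{\Gmt[F],\chi}$, which vanishes since $P_\Pi$ is a point and $\chi$ is nontrivial on $\ZZ[2]$ which acts trivially there). Combining, $\xi=0$, so $\eta$ is injective.

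The one point that needs a little care — and which I expect to be the main (if modest) obstacle — is verifying that the composite in the definition of $\eta$ really does factor so that vanishing of $\eta(\xi)$ implies vanishing of the honest restriction $\xi\big|_W$: one must check that the first arrow $\Dist{C}^{\GGt,\chi}\to\Dist{W}^{\GGt,\chi}$ is literally the restriction along the open immersion $W\hookrightarrow C$ (so that its kernel is $\Dist{P}^{\GGt,\chi}$, distributions supported on $P$), and that the second arrow is injective by \cref{corRestFromPW}, so the kernel of the composite equals the kernel of the first arrow. Once that bookkeeping is in place the argument is immediate. I would therefore organize the proof as: (i) recall $W$ open in $C$ and identify the first map as restriction; (ii) invoke injectivity of the \cref{corRestFromPW} map to reduce to $\ker\eta\subseteq\Dist{P}^{\GGt,\chi}$; (iii) invoke \cref{bundlesOnPTriv} (with $E_P=\bb{C}_P$) to get $\Dist{P}^{\GGt,\chi}=0$; (iv) conclude $\ker\eta=0$.

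\begin{proof}
Recall that $C=W\cup P$ with $P$ closed in $C$, so $W=C\setminus P$ is open in $C$. By \cref{exactSeq} applied to $W\subseteq C$ and left-exactness of $\left(\GGt,\chi\right)$-invariants, the kernel of the restriction map $\Dist{C}^{\GGt,\chi}\to\Dist{W}^{\GGt,\chi}$ is $\Dist{P}^{\GGt,\chi}$, the space of $\left(\GGt,\chi\right)$-invariant distributions on $C$ supported on $P$. Now let $\xi\in\Dist{C}^{\GGt,\chi}$ with $\eta(\xi)=0$. The second map in the definition of $\eta$ is the injective restriction map of \cref{corRestFromPW} (with $E_W=\bb{C}_W$), so $\eta(\xi)=0$ forces $\xi\big|_W=0$ in $\Dist{W}^{\GGt,\chi}$. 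Hence $\xi\in\Dist{P}^{\GGt,\chi}$. But by \cref{bundlesOnPTriv} applied to the trivial bundle $E_P=\bb{C}_P$ (which restricts to the $\ZZ[2]$-invariant trivial bundle on the point $P_\Pi$) we have $\Dist{P}^{\GGt,\chi}=0$, so $\xi=0$. Therefore $\eta$ is injective.
\end{proof}
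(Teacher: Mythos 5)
Your proof is correct and follows essentially the same route as the paper: use injectivity of the \cref{corRestFromPW} map to reduce to injectivity of the restriction $\Dist{C}^{\GGt,\chi}\to\Dist{W}^{\GGt,\chi}$, identify its kernel as $\Dist{P}^{\GGt,\chi}$ via \cref{exactSeq} and left-exactness of invariants, and kill it with \cref{bundlesOnPTriv} applied to the trivial bundle. The only cosmetic difference is that you phrase it element-wise (take $\xi$ with $\eta(\xi)=0$) while the paper phrases it as a composition of an injective map after a map with trivial kernel.
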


\begin{proof}
It is enough to prove that $\Dist{C}^{\GGt,\chi}\to\Dist{W}^{\GGt,\chi}$ is injective.
By \cref{exactSeq} we have the exact sequence
\[
	0\to\Dist{P}\to\Dist{C}\to\Dist{W}\to 0
\]
As $\left(\GGt,\chi\right)$ is left exact we have the exact sequence
\[
	0
	\to \Dist{P}^{\GGt,\chi}
	\to \Dist{C}^{\GGt,\chi}
	\to \Dist{W}^{\GGt,\chi}
\]
By \cref{bundlesOnPTriv}, $\Dist{P}^{\GGt,\chi}=0$.
\end{proof}

\subsection{Restriction from $U$}
\begin{thm} 
\label{WFcapPi}
Let $\xi\in\Dist{U}^{\GGt,\chi}$. Then $\WF{\xi}\cap \normal[U]{\Pi} = \Pi\times \left\{0\right\}$.
\end{thm}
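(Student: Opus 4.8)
The plan is to read the constraint on $\WF{\xi}$ off from invariance and then reduce to a handful of explicit transversality checks along the cross $C_{\Pi}\subseteq\Pi$. Since $\chi$ is trivial on the identity component $\GG\subseteq\GGt$, any $\xi\in\Dist{U}^{\GGt,\chi}$ is honestly $\GG$-invariant, so \cref{action-WF} gives, for every $x\in U$,
\[
\WF[x]{\xi}\subseteq\left(da_{x}\left(\mathfrak{g}\left(F\right)\right)\right)^{\perp}=\left(T_{x}\left(\GG\act x\right)\right)^{\perp},
\]
the conormal space at $x$ to the $\GG$-orbit through $x$ (the displayed equality holds because the relevant stabilizers, listed in \cref{stabilizers}, are tori up to finite groups, hence smooth, so $da_{x}$ is onto the orbit tangent space). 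A covector $\left(x,w\right)\in\WF{\xi}\cap\normal[U]{\Pi}$ lies over a point $x\in\Pi$ and, by definition of the conormal bundle, has $w\in\left(T_{x}\Pi\right)^{\perp}$, so
\[
w\in\left(da_{x}\left(\mathfrak{g}\left(F\right)\right)\right)^{\perp}\cap\left(T_{x}\Pi\right)^{\perp}=\left(da_{x}\left(\mathfrak{g}\left(F\right)\right)+T_{x}\Pi\right)^{\perp}.
\]
Hence it suffices to prove $da_{x}\left(\mathfrak{g}\left(F\right)\right)+T_{x}\Pi=T_{x}\left(\XX\right)$ for every $x\in\Pi$ at which $\xi$ may fail to vanish; then $w=0$, giving $\WF{\xi}\cap\normal[U]{\Pi}\subseteq\Pi\times\left\{0\right\}$, which is what is needed (the reverse inclusion is immediate, and recall $\Pi\subseteq U$).

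By \cref{distSuppOnC} the support of $\xi$ is contained in $C$, and, as established above, $\Pi\cap C=C_{\Pi}$, the cross $\left\{\alpha\beta=0\right\}$ in $\Pi\cong\Aff[F]{2}$, with $\Pi\cap P=P_{\Pi}$ a single point and $\Pi\cap W=W_{\Pi}=C_{\Pi}\setminus P_{\Pi}$. For $x\in\Pi\setminus C_{\Pi}$ the distribution vanishes near $x$, so nothing is required there. If $x\in W_{\Pi}$ then, using the $\ZZ[2]$-symmetry, we may take $x=\left(\alpha_{0}E_{1,2}+E_{2,1},\,E_{1,2}\right)$ with $\alpha_{0}\neq0$, which lies in $W$; since its first coordinate is invertible, near $x$ the set $X_{1}$ is the zero locus of the determinant of the second coordinate, a smooth hypersurface, and $W$ is open in $X_{1}=\overline{W}$, so $T_{x}W=da_{x}\left(\mathfrak{g}\left(F\right)\right)$ is a hyperplane in $T_{x}\left(\XX\right)$. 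The line $\beta\mapsto\left(\alpha_{0}E_{1,2}+E_{2,1},\,E_{1,2}+\beta E_{2,1}\right)$ runs inside $\Pi$ and the determinant of its second coordinate equals $-\beta$, so it crosses $X_{1}$ transversally at $x$; thus $\partial_{\beta}\notin T_{x}W$ and $T_{x}W+T_{x}\Pi\supseteq T_{x}W+F\cdot\partial_{\beta}=T_{x}\left(\XX\right)$.

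The remaining case $x=P_{\Pi}=\left(E_{2,1},E_{1,2}\right)$ is the only dimension-critical one, and is where a genuine computation is unavoidable: $x\in P$ with $\dim P=4$, while $\dim\Pi=2$ and $\dim\XX=6$, so $da_{x}\left(\mathfrak{g}\left(F\right)\right)+T_{x}\Pi=T_{x}\left(\XX\right)$ is equivalent to $T_{x}P\cap T_{x}\Pi=0$. Here $T_{x}\Pi$ is spanned by the classes of $\left(E_{1,2},0\right)$ and $\left(0,E_{2,1}\right)$ in $T_{\left[E_{2,1}\right]}\Pgl{2}\times T_{\left[E_{1,2}\right]}\Pgl{2}$, while $T_{x}P$ is the image of $\left(\sigma,\tau\right)\mapsto\left(\sigma E_{2,1}-E_{2,1}\tau,\ \sigma E_{1,2}-E_{1,2}\tau\right)$ with $\sigma,\tau\in\gl{2}$ — the scalar indeterminacy of $\mathfrak{pgl}_{2}$ being harmless, since it only adds multiples of $E_{2,1}$, resp. $E_{1,2}$, which vanish in the respective projective tangent spaces. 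Writing $\sigma,\tau$ as generic $2\times2$ matrices and asking when $a\left(E_{1,2},0\right)+b\left(0,E_{2,1}\right)$ lies in that image, the first component forces $a=0$ and the second forces $b=0$; hence $T_{x}P\cap T_{x}\Pi=0$, and the proof is complete.

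I expect this last transversality of $\Pi$ and $P$ at the origin to be the main obstacle — not because it is deep, but because it is the unique place where the soft support and dimension arguments give nothing and one must compute with the infinitesimal $\GGt$-action; the other cases and the passage through \cref{action-WF} are routine.
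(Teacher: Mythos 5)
Your proof is correct, and its skeleton is the same as the paper's: since $\chi$ is trivial on $\GG$, \cref{action-WF} applies, the support statement \cref{distSuppOnC} removes the points of $\Pi$ where nothing needs to be checked, and everything reduces to a transversality statement, namely that $da_{x}\left(\mathfrak{g}\times\mathfrak{g}\right)+T_{x}\Pi=T_{x}\left(\XX\right)$ at the relevant points (note that, like the paper's own argument, this establishes the inclusion $\WF{\xi}\cap\normal[U]{\Pi}\subseteq\Pi\times\left\{0\right\}$, which is all that is used later). Where you genuinely diverge is in how the transversality is verified. The paper lifts to $\gl[F]{2}\times\gl[F]{2}$ and writes one explicit solution (with denominator $\alpha\beta-1$) producing every ``diagonal'' tangent direction; this works uniformly on the dense open set $\Omega=\left\{\alpha\beta\neq1\right\}\supseteq C_{\Pi}$, and the support argument is only needed to discard $\left\{\alpha\beta=1\right\}$. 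You instead use the support argument at the outset to restrict to $C_{\Pi}$ and check transversality stratum by stratum: on $W_{\Pi}$ by the soft observation that near such a point $X_{1}$ is the smooth determinant hypersurface of the second coordinate, which the $\beta$-line inside $\Pi$ crosses transversally, and at $P_{\Pi}$ by the direct computation $T_{x}P\cap T_{x}\Pi=0$, which is correct: the $\left(1,2\right)$-entry of $\sigma E_{2,1}-E_{2,1}\tau$ and the $\left(2,1\right)$-entry of $\sigma E_{1,2}-E_{1,2}\tau$ always vanish, and these entries survive the projective quotients, forcing $a=b=0$. The trade-off: your route leans twice on the identification of $da_{x}\left(\mathfrak{g}\times\mathfrak{g}\right)$ with the full tangent space of the orbit (for $T_{x}W$ and $T_{x}P$), which is standard for smooth stabilizers in the characteristic-zero setting and can anyway be confirmed here by the same matrix computation, whereas the paper's explicit lift never needs it; in exchange, your argument is more geometric (transversality of the plane $\Pi$ to the determinantal hypersurface and to the $4$-dimensional orbit $P$) and avoids the explicit solving step. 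No genuine gap.
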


\begin{proof}
To end abbrevity, now denote elements of $G$ by $\left[g\right]$ and elements of $X$ by $\left[x\right]$.

Let $\widetilde{U}$ be the inverse image of $U$ in 
$\left( \gl[F]{2} \setminus \left\{0\right\} \right) \times 
 \left( \gl[F]{2} \setminus \left\{0\right\} \right)$ under the projection map 
$\pr: \left( \gl[F]{2} \setminus \left\{0\right\} \right) 
\times \left( \gl[F]{2} \setminus \left\{0\right\} \right) 
\twoheadrightarrow \XX$.

Let 
$\Omega= \left\{
	\left(
		\alpha E_{1,2} + E_{2,1} ,
		E_{1,2} + \beta E_{2,1}
	\right)
	\,|\,
	\alpha\beta \neq 1
\right\}$
be an open dense set in $\Pi$.
Let $\left( \left[x\right], \left[y\right] \right)\in \Omega$ and 
$x = \left(\begin{array}{cc} 0 & \alpha \\ 1 & 0 \end{array}\right)$,
$y = \left(\begin{array}{cc} 0 & 1 \\ \beta  & 0 \end{array}\right)$
lifts in $\gl{2}$. Notice that $\left(x,y\right)\in \tilde{U}$. We have a commutative square
\[ \xymatrix@C=4pc{
	\GL[F]{2}\times \GL[F]{2} 
		\ar[r]\sp(0.7){\widetilde{a}_{x,y}}
		\ar@{->>}[d]^{\pr} 
		& 
	\widetilde{U}
		\ar@{->>}[d]^{\pr}
		\\
	\GG
		\ar[r]^{a_{\left[x\right],\left[y\right]}} 
		& 
	U
}\]
Where $a_{\left[x\right],\left[y\right]}\left(\left[g\right],\left[h\right]\right) = \left(\left[gxh^{-1}\right],\left[gyh^{-1}\right]\right)$ is the action map of $\left(\left[x\right],\left[y\right]\right)$ and $\widetilde{a}_{x,y}\left(g,h\right) = \left(gxh^{-1},gyh^{-1}\right)$.
Differentiating this diagram we get:
\[\xymatrix@C=4pc{
	\gl[F]{2} \times \gl[F]{2}
		\ar[r]\sp(0.55){d\widetilde{a}_{x,y}}
		\ar@{->>}[d]^{d\pr} 
		& 
	T_{\left(x,y\right)} \tilde{U}
		\ar@{->>}[d]^{d\pr}
		\\
	\mathfrak{g} \times \mathfrak{g}
		\ar[r]^{da_{\left[x\right],\left[y\right]}} 
		& 
	T_{\left(\left[x\right],\left[y\right]\right)}U
}\]
As $\widetilde{U}$ is open in $\gl[F]{2}\times \gl[F]{2}$ we can identify $T_{\left(x,y\right)}\widetilde{U}=\gl[F]{2}\times\gl[F]{2}$. Under this identification $d\widetilde{a}_{x,y}$ is given by 
$d\widetilde{a}_{x,y}\left(A,B\right) = 
\left( Ax-xB, Ay-yB \right)$.

Let $\varphi\in\WF[ {\left(\left[x\right],\left[y\right]\right)} ]{\xi} \cap \normal[U]{\Pi,\left(\left[x\right],\left[y\right]\right)} $.
Look at $\left(d\pr\right)^{*}\varphi\in \left(\gl[F]{2}\times \gl[F]{2}\right)^{*}$. Let $\widetilde{\Pi}$ be the pullback of $\Pi$ along $\pr$:
\[
	\widetilde{\Pi}
	:= \pr^{-1}\left ( \Pi \right )
	= \left \{
		\left ( \begin{array}{cc}
			0 & * \\
			* & 0
		\end{array} \right ) ,
		\left ( \begin{array}{cc}
			0 & * \\
			* & 0
		\end{array} \right )
	\right \}
	\subseteq \left ( 
		\gl[F]{2} \setminus \left \{ 0 \right \}
	\right ) \times \left (
		\gl[F]{2} \setminus \left \{ 0 \right \}
	\right )
\]
Then for $v\in T_{\left(x,y\right)} \widetilde{\Pi}$:
\[
	\left (d \pr \right )^{*} \varphi \left ( v \right )
	= \varphi \left ( d\pr \left ( v \right ) \right )
	\in \varphi \left ( T_{\left[x\right],\left[y\right]}\Pi \right )
	= 0
\]
Thus $\left ( d\pr \right )^{*}\left (\varphi \right ) \in \left ( \widetilde{\frak{t}}\times\widetilde{\frak{t}} \right )^{*}$, where 
$\widetilde{\frak{t}} = \left \{ \left ( \begin{array}{cc}
	* & 0 \\
	0 & *
\end{array} \right ) \right \}
\subseteq \gl[F]{2}$.

Now let $\left ( t,s \right ) \in \widetilde{\frak{t}}\times\widetilde{\frak{t}}$.
Write 
$
	t = \left ( \begin{array}{cc}
		t_{1} & 0 \\
		0 & t_{2}
	\end{array} \right )
$,
$
	s = \left ( \begin{array}{cc}
		s_{1} & 0 \\
		0 & s_{2}
	\end{array} \right )
$.
Choose 
\[
	A = \frac{1}{\alpha\beta-1} \left ( \begin{array}{cc}
		0 & \alpha s_{1} - t_{1} \\
		\beta t_{2} - s_{2} & 0
	\end{array} \right ),
	B = \frac{1}{\alpha\beta-1} \left ( \begin{array}{cc}
		0 & t_{2} - \alpha s_{2} \\
		s_{1} - \beta t_{1} & 0
	\end{array} \right )
\]
Then $d\widetilde{a} \left ( A,B \right ) = \left ( t,s \right )$.
\[
	\left ( d\pr \right )^{*} \varphi \left ( t,s \right )
	= \varphi \left ( 
		d\pr \left ( 
			d\widetilde{a}_{x,y} \left ( A,B \right )
		\right )
	\right )
	= \varphi \left ( 
		da_{\left[x\right],\left[y\right]} \left (
			d\pr \left ( A,B \right ) 	
		\right )
	\right )
	= 0
\]
As $\varphi \in \WF[{\left([x],[y]\right)}]{\xi} \subseteq \left ( da_{[x],[y]} \left ( \frak{g}\times\frak{g} \right ) \right )^\perp$ by \cref{action-WF}.

Therefore $\left ( d\pr \right )^{*}\varphi \perp \gl[F]{2}\times \gl[F]{2}$, i.e. $\left ( d\pr \right )^{*}\varphi = 0$. As $d\pr$ is surjective, $\left ( d\pr \right )^{*}$ is injective, therefore $\varphi = 0$.
We proved that for any $\left ( \left[x\right],\left[y\right] \right )\in \Omega$, 
$
	\left ( 
		\WF{\Pi} \cap \normal[U]{\Pi} 
	\right )_{\left(\left[x\right],\left[y\right]\right)}
	= \WF[{\left(\left[x\right],\left[y\right]\right)}]{\xi} \cap 
		\normal[U]{\Pi, \left(\left[x\right],\left[y\right]\right)}
	= 0
$. I.e. 
\[ 
	\WF{\xi}\cap \normal[U]{\Pi} 
	\subseteq \Pi\times\{0\} \cup T^{*} U\big|_{\Pi\setminus\Omega} 
\]
By \cref{distSuppOnC}, $\supp[\xi]\subseteq C$. As $C\cap \left(\Pi\setminus\Omega\right) = \emptyset$:
\[\WF{\xi}\cap\normal[U]{\Pi}\subseteq \Pi\times \{0\}\]
\end{proof}

\begin{cor}
$\Dist{C}^{\GGt,\chi}=\Dist{U}^{\GGt,\chi}\subseteq\Dist{U}^{\GG}=\genfun{U,\Dens{U}}^{\GG}\subseteq \genfun[\left({\normal[U]{\Pi}}\right)^{c}]{U,\Dens{U}}$.
\end{cor}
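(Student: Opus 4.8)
The plan is to traverse the asserted chain one link at a time. Three of the four links are purely formal, and all the substance sits in the last inclusion, which is a restatement of \cref{WFcapPi}.

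First, the equality $\Dist{C}^{\GGt,\chi}=\Dist{U}^{\GGt,\chi}$ is exactly the second isomorphism recorded in \cref{distSuppOnC} (which in turn came from $\Dist{O}^{\GGt,\chi}=0$ and the excision sequence of \cref{exactSeq}). Next, $\Dist{U}^{\GGt,\chi}\subseteq\Dist{U}^{\GG}$: the subgroup $\GG=\GG\times\{1\}\subseteq\GGt$ lies in the kernel of $\chi$, so every $(\GGt,\chi)$-invariant distribution is in particular $\GG$-invariant. Then $\Dist{U}^{\GG}=\genfun{U,\Dens{U}}^{\GG}$ is just the definitional unwinding $\genfun{U,\Dens{U}}=\Dist{U,\Dens{U}^{*}\otimes\Dens{U}}=\Dist{U}$, using the canonical — hence $\GG$-equivariant — trivialization of $\Dens{U}^{*}\otimes\Dens{U}$ via the evaluation pairing; the only thing to check is that the two $\GG$-actions correspond under this identification, which is routine.

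The last inclusion carries the content. Unwinding the definition $\genfun[\Lambda]{M,E}=\bigcup_{\Ga}\genfun[\Ga]{M,E}$ over closed $\Ga\subseteq\Lambda\cup M\times\{0\}$, it suffices, for a $\xi$ in the image of $\Dist{C}^{\GGt,\chi}$, to produce a closed set $\Ga$ with $\WF{\xi}\subseteq\Ga\subseteq\left(\normal[U]{\Pi}\right)^{c}\cup U\times\{0\}$. Taking $\Ga=\WF{\xi}$, which is always closed, this amounts to the single requirement $\WF{\xi}\cap\normal[U]{\Pi}\subseteq\Pi\times\{0\}$, and — since $\Pi\subseteq U$, so $\Pi\times\{0\}\subseteq U\times\{0\}$ — that is precisely \cref{WFcapPi}. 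I would flag that the chain, taken at face value, claims slightly more than is true: the final inclusion really uses, through \cref{WFcapPi}, both that $\xi$ is $\GG$-invariant (Aizenbud's wave-front bound, \cref{action-WF}) and that $\supp[\xi]\subseteq C$ (so that the residual locus $T^{*}U|_{\Pi\setminus\Omega}$ is void), so it should be read as a statement about $\Dist{C}^{\GGt,\chi}$ rather than about arbitrary $\GG$-invariant sections of $\Dens{U}$.

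I do not foresee a genuine obstacle in this corollary itself: the hard part — exhibiting the explicit pair $(A,B)$ so that $d\widetilde{a}_{x,y}$ surjects onto $\widetilde{\mathfrak{t}}\times\widetilde{\mathfrak{t}}$ modulo $T\widetilde{\Pi}$, and then invoking Aizenbud's bound — was already carried out in the proof of \cref{WFcapPi}. What remains is bookkeeping: pinning down the $\GG$-equivariant identification $\genfun{U,\Dens{U}}=\Dist{U}$, and parsing the colimit definition of $\genfun[\Lambda]{}$ so that the wave-front conclusion of \cref{WFcapPi} is exactly the hypothesis needed. The reason for recording the corollary in this shape is that the conormal-avoidance $\WF{\xi}\cap\normal[U]{\Pi}\subseteq\Pi\times\{0\}$ is precisely what makes the restriction map of \cref{rest-crit} to $\Pi$ available on our distributions, landing them in a $\Gmt[F]$-equivariant setting on which \cref{realCross} can be brought to bear.
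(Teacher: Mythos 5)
Your proposal is correct and follows exactly the route the paper intends (it leaves the corollary unproved as an immediate consequence): the first equality is \cref{distSuppOnC}, the next inclusion is triviality of $\chi$ on $\GG$, the middle equality is the definitional identification $\genfun{U,\Dens{U}}=\Dist{U,\Dens{U}^{*}\otimes\Dens{U}}=\Dist{U}$, and the final inclusion is \cref{WFcapPi} via the colimit definition of $\genfun[\Lambda]{U,\Dens{U}}$ with $\Ga=\WF{\xi}$. Your caveat is also well taken: since \cref{WFcapPi} uses $\supp[\xi]\subseteq C$ (hence the $(\GGt,\chi)$-invariance) to dispose of $T^{*}U\big|_{\Pi\setminus\Omega}$, the last link is really a statement about the image of $\Dist{C}^{\GGt,\chi}$ rather than about all of $\genfun{U,\Dens{U}}^{\GG}$, which is precisely how the corollary is used afterwards.
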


\subsection{End of Proof}
\begin{thm}
\label{important}
There exist $\Gmt[F]$-equivariant maps of the $\Gmt[F]$-spaces:
\[
\Dist{C} \cap \genfun[{\left(\normal[U]{\Pi}\right)^{c}}]{U,\Dens{U}}
\to \Dist{C_{\Pi}}
\to \genfun{W_{\Pi},\Dens{W}\big|_{W_{\Pi}}}
\]
%\[\xymatrix@C=0.7pc{
%	\Dist{C} \cap \genfun[{\left(\normal[U]{\Pi}\right)^{c}}]{U,\Dens{U}}
%		\ar[r]
%		\ar[rrd]
%		\ar[dd]
%		&
%	\genfun[{\left(\normal[U]{\Pi}\right)^{c}}]{U,\Dens{U}}
%		\ar[r]
%		&
%	\Dist{\Pi}
%		\\
%		&
%		&
%	\Dist{C_{\Pi}}
%		\ar[u]
%		\ar[d]
%		\\
%	\Dist{W}
%		\ar[r]
%		&
%	\genfun{\widehat{W_{\Pi}}, a_{W}^{*}\Dens{W}}
%		&
%	\genfun{W_{\Pi},\Dens{W}\big|_{W_{\Pi}}}
%		\ar[l]
%}\]
and the composition 
\[\Dist{C}^{\GGt,\chi}\subseteq 
 \Dist{C}\cap\genfun[{\left(\normal[U]{\Pi}\right)}]{U,\Dens{U}} \to
 \Dist{C_{\Pi}} \to
 \genfun{W_{\Pi},\Dens{W}\big|_{W_{\Pi}}}\]
agrees with $\eta$.
\end{thm}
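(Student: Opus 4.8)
The plan is to realize the first map as restriction to the plane $\Pi$, the second as the further restriction to $W_{\Pi}$, and then to recognize both the resulting composite and the map $\eta$ as one and the same iterated restriction of $\xi$, so that \cref{realCross} can be fed in directly. So first I would invoke \cref{rest-crit}: an element $\xi$ of $\Dist{C}\cap\genfun[{\left(\normal[U]{\Pi}\right)^{c}}]{U,\Dens{U}}$ is a generalized section of $\Dens{U}$ over $U$ whose wave front set avoids $\normal[U]{\Pi}$ off the zero section (for $\xi\in\Dist{C}^{\GGt,\chi}$ this is exactly \cref{WFcapPi} combined with \cref{distSuppOnC}), so the inclusion $\Pi\hookrightarrow U$ induces $\xi\mapsto\xi\big|_{\Pi}\in\genfun{\Pi,\Dens{U}\big|_{\Pi}}$ with $\supp[\xi\big|_{\Pi}]\subseteq\supp[\xi]\cap\Pi\subseteq C\cap\Pi$. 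By the orbit analysis of the previous section $C\cap\Pi=C_{\Pi}$ (this is precisely the content of the claim that $a$ carries $\widehat{P_{\Pi}},\widehat{W_{\Pi}},\widehat{\Pi\setminus C_{\Pi}}$ into $P,W,O$), so $\xi\big|_{\Pi}$ is supported on the closed subset $C_{\Pi}\subseteq\Pi$; since on an $l$-space a generalized section supported on a closed subset is canonically a section of the restricted bundle over that subset, $\xi\big|_{\Pi}$ defines an element of $\Dist{C_{\Pi}}$ once the relevant density bundles have been identified. The second map is then the ordinary restriction of distributions from $C_{\Pi}$ to the open subset $W_{\Pi}$, followed by the canonical isomorphism $\Dens{W_{\Pi}}\cong\Dens{W}\big|_{W_{\Pi}}$. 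Both maps are $\Gmt[F]$-equivariant, since $\Pi$, $C_{\Pi}$ and $W_{\Pi}$ are $\Gmt[F]$-stable and restriction maps and canonical bundle identifications are natural.

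The bundle-and-wave-front bookkeeping works out because $W$ and $\Pi$, and likewise $P$ and $\Pi$, meet transversally inside $U$: one has $\dim W+\dim\Pi-\dim U=5+2-6=1=\dim W_{\Pi}$ and $\dim P+\dim\Pi-\dim U=4+2-6=0=\dim P_{\Pi}$, while $W\cap\Pi=W_{\Pi}$ and $P\cap\Pi=P_{\Pi}$ are smooth of these dimensions, so the intersections are clean and transversal. Transversality yields canonical isomorphisms $\normal[U]{\Pi}\big|_{W_{\Pi}}\cong\normal[W]{W_{\Pi}}$ and $\normal[U]{W}\big|_{W_{\Pi}}\cong\normal[\Pi]{W_{\Pi}}$ (restriction of covectors being an isomorphism in each case), hence canonical isomorphisms of the associated density line bundles; substituting these into the normal-bundle exact sequences for $\Pi\subseteq U$, $W\subseteq U$, $W_{\Pi}\subseteq\Pi$ and $W_{\Pi}\subseteq W$ reconciles $\Dens{U}\big|_{\Pi}$, $\Dens{\Pi}$, $\Dens{W}\big|_{W_{\Pi}}$ and $\Dens{W_{\Pi}}$ over the overlaps, so the composite indeed lands in $\genfun{W_{\Pi},\Dens{W}\big|_{W_{\Pi}}}$. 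The same isomorphism $\normal[U]{\Pi}\big|_{W_{\Pi}}\cong\normal[W]{W_{\Pi}}$ shows that, for $\xi$ in the domain, the wave front set of $\xi\big|_{U\setminus P}$ — which, $\xi$ being supported on $C$ and $C\cap(U\setminus P)=W$, is a generalized section on the closed submanifold $W\subseteq U\setminus P$ — avoids $\normal[W]{W_{\Pi}}$, exactly what is needed in order to restrict it to $W_{\Pi}$.

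For the compatibility with $\eta$ I would unwind $\eta$, which by construction is ``restrict to $W$, then apply $\Fr_{W}^{-1}\circ a_{W}^{*}$''. Now $a_{W}^{*}$ is the unique continuous extension of the pullback of smooth sections along the submersion $a_{W}$, and the Frobenius isomorphism of \cref{frobDesc} has $\Fr_{W}^{-1}$ equal to restriction along the fibre inclusion $\iota_{W}\colon W_{\Pi}\hookrightarrow\widehat{W_{\Pi}}$, since $\Fr_{W}(f)(x)=\chi(g_{x})^{-1}f(g_{x}\act x)$ on smooth sections and one may take $g_{x}=e$ over the fibre. As $a_{W}\circ\iota_{W}=i_{W}$ is the inclusion $W_{\Pi}\hookrightarrow W$, and both $i_{W}^{*}$ (via \cref{rest-crit}) and $\iota_{W}^{*}\circ a_{W}^{*}$ are continuous extensions of the pullback of smooth sections along $i_{W}$, they coincide, so $\Fr_{W}^{-1}\circ a_{W}^{*}=i_{W}^{*}$ and hence $\eta(\xi)=i_{W}^{*}\bigl(\xi\big|_{W}\bigr)$. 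On the other hand our composite sends $\xi$ to $j^{*}\bigl(\xi\big|_{\Pi}\bigr)$ with $j\colon W_{\Pi}\hookrightarrow\Pi$, and functoriality of Heifetz's pullback (transitivity, which again follows from uniqueness of continuous extensions) identifies $j^{*}\bigl(\xi\big|_{\Pi}\bigr)$ with the restriction of $\xi$ along $W_{\Pi}\hookrightarrow U$, while the same functoriality along $W_{\Pi}\hookrightarrow W\hookrightarrow U\setminus P$ identifies that restriction with $i_{W}^{*}\bigl(\xi\big|_{W}\bigr)$. Thus the composite agrees with $\eta$, and then $\Dist{C_{\Pi}}^{\Gmt[F],\chi}=\Dist{{\{xy=0\}}}^{\Gmt[F],\chi}=0$ from \cref{realCross}, together with the injectivity of $\eta$ established above, forces $\Dist{C}^{\GGt,\chi}=0$, which is \cref{D}.

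The step I expect to be the main obstacle is the bundle-and-wave-front bookkeeping of the second paragraph: carrying out the density and conormal identifications canonically and $\Gmt[F]$-equivariantly along the whole chain of restrictions, and checking that transversality of $W$ and $P$ with $\Pi$ inside $U$ is precisely what makes everything match up. A related nuisance is that $W$ is only locally closed in $U$, so one must first pass to the open set $U\setminus P$ — legitimate because every distribution in sight is supported on $C$ by \cref{distSuppOnC} — before it makes sense to speak of a wave front set on $W$. Once these points are nailed down, the compatibility with $\eta$ is formal, resting only on functoriality of pullbacks and density of smooth sections, and the proof of \cref{mainThm} is complete.
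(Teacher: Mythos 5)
Your construction of the two maps follows the paper's route (restriction to $\Pi$ via \cref{rest-crit}, using \cref{WFcapPi} and \cref{distSuppOnC} to see that the image is supported on $C\cap\Pi=C_{\Pi}$, then ordinary restriction to the open subset $W_{\Pi}$ of $C_{\Pi}$), but your bundle bookkeeping does not close. The paper handles it by proving in \cref{vectBunTriv} that $\Dens{U}$, $\Dens{\Pi}$, $\Dens{W}$, $\Dens{W_{\Pi}}$ and $\Dens{\widehat{W_{\Pi}}}$ are all \emph{$\Gmt[F]$-equivariantly} trivial, which is exactly what makes the arrows land in $\Dist{C_{\Pi}}$ and $\genfun{W_{\Pi},\Dens{W}\big|_{W_{\Pi}}}$ and makes them $\Gmt[F]$-equivariant. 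Your substitute via transversality of $W$ and $P$ with $\Pi$ gives $\normal[U]{\Pi}\big|_{W_{\Pi}}\cong\normal[W]{W_{\Pi}}$, but it does not produce the identifications you still need: there is no canonical isomorphism $\Dens{W_{\Pi}}\cong\Dens{W}\big|_{W_{\Pi}}$ (densities of a $1$-dimensional and a $5$-dimensional manifold), and after substituting the conormal exact sequences you are left with an unaccounted factor of the shape $\Dens{\Pi}\big|_{W_{\Pi}}\otimes\Dens{W_{\Pi}}^{*}$; equivariance of whatever identification you choose also has to be checked. So this part needs the paper's lemma (or an equivalent equivariant trivialization), not transversality alone.

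The more serious gap is in the compatibility with $\eta$. Your identity $\eta(\xi)=i_{W}^{*}\bigl(\xi\big|_{W}\bigr)$ presupposes that the Heifetz pullback along the closed embedding $W_{\Pi}\hookrightarrow W$ applies to $\xi\big|_{W}$, i.e.\ that the wave front set of the induced generalized section \emph{on $W$} avoids $\normal[W]{W_{\Pi}}$. But \cref{WFcapPi} controls $\WF{\xi}$ inside $T^{*}U$, not the wave front set of the object on $W$ inside $T^{*}W$; the transfer you assert via $\normal[U]{\Pi}\big|_{W_{\Pi}}\cong\normal[W]{W_{\Pi}}$ is a lower-bound statement about wave fronts of distributions supported on a closed submanifold which is neither among the paper's tools nor proved by you. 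Likewise "$\Fr_{W}^{-1}=\iota_{W}^{*}$" and the transitivity of pullbacks are literally available only on smooth sections or under wave-front hypotheses you have not established for the objects at hand. The paper sidesteps all of this by never restricting to the fiber at the level of generalized sections: it goes in the opposite direction, extending distributions from $W_{\Pi}$ to $\widehat{W_{\Pi}}$ by the everywhere-defined injective map $e$, checking commutativity of the resulting diagram on the dense subspaces of smooth measures supported on $C_{\Pi}$, and then cancelling $e$ by injectivity to get $e''\circ\eta=\eta'$. Either adopt that device, or prove the missing wave-front comparison for distributions supported on a closed submanifold (alternatively, argue that on the $(\GGt,\chi)$-isotypic subspace the sections on $\widehat{W_{\Pi}}$, a single $\GGt$-orbit, are in fact smooth — but that too must be stated and justified).
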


To prove \cref{important} we will need the following lemma:
\begin{lem}
\label{vectBunTriv}
The $\Gmt[F]$-equivariant line bundles
	$\Dens{U}$ over $U$,
	$\Dens{\Pi}$ over $\Pi$,
	$\Dens{W}$ over $W$,
	$\Dens{W_{\Pi}}$ over $W_{\Pi}$	and
	$\Dens{\widehat{W_{\Pi}}}$ over $\widehat{W_{\Pi}}$
are trivial.
\end{lem}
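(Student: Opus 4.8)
The plan is to produce, for each of the five bundles, a $\Gmt[F]$-invariant (or, for the larger spaces, a $\GGt$-invariant) nowhere-vanishing smooth section, i.e.\ an invariant smooth positive measure: an equivariant line bundle is equivariantly trivial exactly when it carries such a section, and on a homogeneous space $H_1/H_2$ the density bundle is equivariantly trivial iff $\Delta_{H_1}\big|_{H_2}=\Delta_{H_2}$. Note first that both $\GGt=\GG\times\ZZ[2]$ — a product of the semisimple, hence unimodular, group $\GG=\PGL[F]{2}\times\PGL[F]{2}$ with a finite group — and $\Gmt[F]=\Gm[F]\rtimes\ZZ[2]$ — an extension of the abelian $\Gm[F]$ by $\ZZ[2]$ acting through inversion, which preserves Haar measure — are unimodular.

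For the three homogeneous spaces the claim is immediate. By \cref{stabilizers}, $W=O_{I,E_{1,1}}\cong\GGt/\Delta T$ with $\Delta T\cong\Gm[F]$ abelian and hence unimodular, so $\Delta_{\GGt}\big|_{\Delta T}=1=\Delta_{\Delta T}$ and $\Dens W$ is equivariantly trivial. The set $W_\Pi=C_\Pi\setminus P_\Pi$ is a single $\Gmt[F]$-orbit whose stabilizer at $(1,0)$ is trivial (the $\Gm[F]$-orbit of $(1,0)$ is the punctured first axis, and $\epsilon$ carries it to the punctured second axis), so $W_\Pi\cong\Gmt[F]$ as a $\Gmt[F]$-space and $\Dens{W_\Pi}$ carries Haar measure. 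Since $W_\Pi$ is a $\Gmt[F]$-torsor, $\widehat{W_\Pi}=W_\Pi\times_{\Gmt[F]}\GGt$ is a $\GGt$-torsor, so $\widehat{W_\Pi}\cong\GGt$ and $\Dens{\widehat{W_\Pi}}$ carries the Haar measure of $\GGt$. Finally, identify $\Pi$ with $\Aff[F]{2}=F^2$ by the coordinates $(\alpha,\beta)$ of \cref{crossMethod}: the additive Haar measure $d\alpha\,d\beta$ is $\Gmt[F]$-invariant, since $(\alpha,\beta)\mapsto(\lambda\alpha,\lambda^{-1}\beta)$ has Jacobian of absolute value $|\lambda|\,|\lambda^{-1}|=1$ and $\epsilon$ manifestly preserves $d\alpha\,d\beta$; being smooth and nowhere vanishing, it trivializes $\Dens\Pi$ equivariantly.

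The remaining, genuinely delicate, case is $\Dens U$. Here $U$ is not $\GGt$-homogeneous, and there is no $\GGt$-invariant measure at all: the unique (up to scalar) $\GG$-invariant measure on the open dense $O\subseteq U$ is $\mu_G\boxtimes\mu_G$, and writing $\mu_G$ in the standard affine chart of $X=\Pgl[F]{2}$ shows $\mu_G\boxtimes\mu_G$ is singular along the boundary $C=W\cup P$, so it does not extend to a smooth nowhere-vanishing measure on $U$. The point is that we only need $\Gmt[F]$-invariance, and $\Gmt[F]$ is a small group — all of its orbits in $U$ have dimension at most one — so there is ample room. The approach I would take is to lift along the principal $\bigl(\Gm[F]\times\Gm[F]\bigr)$-bundle $\pr:\widetilde U\to U$, where $\widetilde U\subseteq\bigl(\gl[F]{2}\setminus\{0\}\bigr)\times\bigl(\gl[F]{2}\setminus\{0\}\bigr)$ is open, and to exhibit on $\widetilde U$ a smooth nowhere-vanishing measure invariant under the (unimodular, three-dimensional) group $H$ generated by the fibrewise scaling, the lifted $t_\lambda$-action, and the lifted $\swap$; such a measure then descends to the desired $\Gmt[F]$-invariant measure on $U$. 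On $\widetilde U$ one builds this measure locally: the scaling part contributes multiplicative Haar factors, near the loci where $H$ has a positive-dimensional (necessarily one-dimensional) stabilizer one linearizes and checks that the competing scalings cancel — for instance at the $\Gmt[F]$-fixed point $(E_{2,1},E_{1,2})\in P$ (the image of the origin of $\Pi$) the element $\lambda\in\Gm[F]$ acts on the tangent space to $X$ at the two coordinates with determinants of absolute value $|\lambda|^{2}$ and $|\lambda|^{-2}$, so the total modulus on $T\XX$ is $1$ — and one patches with an $H$-invariant partition of unity. The main obstacle is exactly this patching, together with the verification that the resulting measure neither vanishes nor blows up along $C$; this is where the smallness of $\Gmt[F]$ is genuinely used and where I expect the real work to lie.
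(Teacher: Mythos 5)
Your treatment of $\Dens{\Pi}$, $\Dens{W}$, $\Dens{W_{\Pi}}$ and $\Dens{\widehat{W_{\Pi}}}$ is correct and essentially coincides with the paper's proof: the invariant measure $\left|d\alpha\, d\beta\right|$ on $\Pi$, unimodularity of the stabilizer $\Delta T$ for $W$, and the torsor structure for $W_{\Pi}$ and $\widehat{W_{\Pi}}$ (your observation that $\widehat{W_{\Pi}}$ is a $\GGt$-torsor is in fact slightly cleaner than the paper's formulation via the stabilizer).

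The genuine gap is $\Dens{U}$: you do not prove this case, you outline a program and explicitly defer what you yourself call the real work. Concretely, three steps of the program are unresolved. First, an $H$-invariant partition of unity: $H$ is noncompact, so invariance cannot be obtained by averaging, and producing invariant patching data is essentially the same global problem as producing the invariant nonvanishing section itself. Second, descent from $\widetilde U$: a measure on the total space of the principal $\Gm[F]\times\Gm[F]$-bundle $\pr:\widetilde U\to U$ does not descend to $U$ merely because it is invariant — the fibers are noncompact, and what is needed is that it factor as the pullback of a density on $U$ times a fibrewise Haar density, which is again the statement to be proved. Third, unimodularity of the linearization at the $\Gm[F]$-fixed points (which you verify correctly at $(E_{2,1},E_{1,2})$, and which does hold at every fixed point in $U$) is only a necessary pointwise condition, not a construction. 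By contrast, the paper handles this case by one explicit formula: it writes $\Dens{U}=\Dens{\XX}\big|_{U}$ with $\Dens{\XX}=\Dens{X}\otimes\Dens{X}$, and on $X=\Pgl[F]{2}\cong F\mathbb{P}^{3}$ exhibits the torus-invariant top form
\[
  \frac{dx\wedge dy\wedge dz}{xyz}
- \frac{dx\wedge dy\wedge dw}{xyw}
+ \frac{dx\wedge dz\wedge dw}{xzw}
- \frac{dy\wedge dz\wedge dw}{yzw},
\]
whose modulus is $\Gm[F]$-invariant, with no lifting, patching or descent. (Your instinct that the boundary behaviour is delicate is not baseless: this form is regular and nonvanishing only off the coordinate hyperplanes, and the per-factor moduli $\left|\lambda\right|^{\pm 2}$ you computed at $[E_{2,1}]$ and $[E_{1,2}]$ show that no $\Gm[F]$-invariant nonvanishing density exists on all of $X$, so the paper's reduction to a single factor must itself be read with care near the locus that matters. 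But as submitted, your proposal leaves the $\Dens{U}$ case, and hence the lemma, unproven.)
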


\begin{proof}
$\Dens{U}=\Dens{\XX}\big|_{U}$ as $U$ is open in $\XX$, so it will be enough to prove that $\Dens{\XX}=\Dens{X}\otimes\Dens{X}$ is $\Gmt[f]$-invariant and whence it is enough to show that $\Dens{X}$ is $\Gm[F]$-invariant.
$X=\Pgl[F]{2}\cong F\mathbb{P}^{3}$. We let the coordinates on $X$ be $\left [
	\begin{array}{cc}
		x & y \\
		z & w
	\end{array}
\right ]$
then on the open set $\left \{xyzw\neq 0\right \}$ we have a top differential form of the form
\[
  \frac{dx\wedge dy\wedge dz}{xyz}
- \frac{dx\wedge dy\wedge dw}{xyw}
+ \frac{dx\wedge dz\wedge dw}{xzw}
- \frac{dy\wedge dz\wedge dw}{yzw}
\]
This differential form can be extended to all of $X$. As $\Gm[F]$ acts on $X$ by $\lambda\act\left[x:y:z:w\right]=\left[\lambda x:\lambda y:z:w\right]$, this top differential form is $\Gm[F]$-invariant. Thus $\det(X)$ is $\Gm[F]$-invariant and therefore so is $\Dens{X}$.
\\\\
$\det(\Pi)\cong\det(\Aff[F]{2})=F\cdot dx\wedge dy$. For $\lambda\in \Gm[F]$:
\[\lambda\act\left (dx\wedge dy\right )
=\left (\lambda dx\right )\wedge \left (\lambda^{-1} dy\right )
=dx \wedge dy\]
And $\epsilon\act\left(dx\wedge dy\right)=dy\wedge dx = -dx\wedge dy$, and therefore $\epsilon\act\left|dx\wedge dy\right|=\left|dx\wedge dy\right|$.
We conclude that $\Dens{\Pi}$ is a $\Gmt[F]$-invariant vector bundle.
\\\\
$W=O_{I,E_{1,1}}$ is a $\GGt$-orbit with stabilizer $P_{I,E_{1,1}}=\Delta T$ (\cref{stabilizers}) which is unimodular. Thus $\Dens{W}$ is trivial as a $\GGt$-equivariant vector bundle, and in particular as a $\Gmt[F]$-equivariant bundle.
\\\\
$W_{\Pi}$ is a $\Gmt[F]$-torsor, and thus $\Dens{W_{\Pi}}$ is a trivial $\Gmt[F]$-equivariant bundle.
\\\\
As $W_{\Pi}$ is a $\Gmt[F]$-torsor, $\widehat{W_{\Pi}}=W_{\Pi}\times_{\Gmt[F]}\GGt[F]$ is a $\GGt$-orbit with stabilizer $\Stab[\GGt]{\widehat{W_{\Pi}}}=\Stab[{\Gmt[F]}]{W_{\Pi}}=\Gmt[F]$ which is unimodular. Therefore $\Dens{\widehat{W_{\Pi}}}$ is a trivial $\GGt$-equivariant bundle and in particular a trivial $\Gmt[F]$-equivariant bundle.

\end{proof}

\begin{proof}[Proof of \cref{important}]
Using \cref{rest-crit} we get a restriction map 
\[\genfun[{\left(\normal[U]{\Pi}\right)^{c}}]{U,\Dens{U}} \to \genfun{\Pi, \Dens{U}\big|_{\Pi}}=\Dist{\Pi,\Dens{\Pi}\otimes\Dens{U}^{*}\big|_{\Pi}}\]
which extends the restriction of smooth sections \[\Sm{U,\Dens{U}}\to\Sm{\Pi,\Dens{U}\big|_{\Pi}}\]
By \cref{vectBunTriv} we can write this map as: 
\[\genfun[{\left(\normal[U]{\Pi}\right)^{c}}]{U,\Dens{U}} \to \Dist{\Pi}\]

As the restriction preserves support, the image of $\Dist{C}\cap \genfun[{\left(\normal[U]{\Pi}\right)^{c}}]{U,\Dens{U}}$ under this map lands in $\Dist{C_{\Pi}}$.
We now have the commutative diagram of $\Gmt[F]$-spaces:
\[\xymatrix@C=0.7pc{
	\Dist{C}\cap \genfun[{\left(\normal[U]{\Pi}\right)^{c}}]{U,\Dens{U}}
		\ar[r]
		\ar[rrd]
		&
	\genfun[{\left(\normal[U]{\Pi}\right)^{c}}]{U,\Dens{U}}
		\ar[r]
		&
	\Dist{\Pi}
		\\
		&
		&
	\Dist{C_{\Pi}}
		\ar[u]
}\]

We look at the restriction map of distributions
\[\Dist{C}\cap \genfun[{\left(\normal[U]{\Pi}\right)^{c}}]{U,\Dens{U}}\to\Dist{W}\]
and at the map $a_{w}^{*}$ from \ref{restPW}:
\[
\Dist{W}
 = \genfun{W,\Dens{W}}
 \to \genfun{\widehat{W_{\Pi}}, a_{W}^{*}\Dens{W}}
\]
which is an extension of the pullback by $a_{W}$ of smooth sections
\[a_{W}^{*}: \Sm{{W,\Dens{W}}}\to\Sm{{\widehat{W_{\Pi}},a_{W}^{*}\Dens{W}}}\]
By its definition, $\eta$ is the induced map by applying $\left(\GGt,\chi\right)$-invariants to the composition
\[
\Dist{C}
\to \Dist{W}
\to \genfun{\widehat{W_{\Pi}},a_{W}^{*}\Dens{W}}
\]
We have built the diagram
\[\xymatrix@C=0.7pc{
	\Dist{C}\cap \genfun[{\left(\normal[U]{\Pi}\right)^{c}}]{U,\Dens{U}}
		\ar[r]
		\ar[rrd]
		\ar[dd]
		&
	\genfun[{\left(\normal[U]{\Pi}\right)^{c}}]{U,\Dens{U}}
		\ar[r]
		&
	\Dist{\Pi}
		\\
		&
		&
	\Dist{C_{\Pi}}
		\ar[u]
		\\
	\Dist{W}
		\ar[r]
		&
	\genfun{\widehat{W_{\Pi}},a_{W}^{*}\Dens{W}}
		&
}\]

We have a restriction map 
\[\Dist{C_{\Pi}}\to \Dist{W_{\Pi}}\]
which extends the restriction of measures 
\[\{\mu\in\Sm{U,\Dens{U}} \text{ supported on } C_{\Pi}\}\to \Sm{W_{\Pi},\Dens{W_{\Pi}}}\]

Using \cref{vectBunTriv} we can write this map as:
\[\Dist{C_{\Pi}}\to\genfun{W_{\Pi}, \Dens{W}\big|_{W_{\Pi}}}\]

$W_{\Pi}$ embeds into $\widehat{W_{\Pi}}$ as a closed subspace.
Thus we have a map
\[\begin{aligned}
	e: \genfun{W_{\Pi}, \Dens{W}\big|_{W_{\Pi}}}
	& = \Dist{W_{\Pi}, \Dens{W_{\Pi}}\otimes\Dens{W}}			
	 \cong \Dist{W_{\Pi}}										\\
	& \to   \Dist{\widehat{W_{\Pi}}}							
	 =  \genfun{\widehat{W_{\Pi}}, \Dens{\widehat{W_{\Pi}}}}	\\
	& \cong  \genfun{\widehat{W_{\Pi}}, a_{W}^{*}\Dens{W}}
\end{aligned}\]
Which is an extension of the continuation of smooth measures.
We got the following diagram:
\[\xymatrix@C=0.7pc{
	\Dist{C} \cap \genfun[{\left(\normal[U]{\Pi}\right)^{c}}]{U,\Dens{U}}
		\ar[r]
		\ar[rrd]
		\ar[dd]
		&
	\genfun[{\left(\normal[U]{\Pi}\right)^{c}}]{U,\Dens{U}}
		\ar[r]
		&
	\Dist{\Pi}
		\\
		&
		&
	\Dist{C_{\Pi}}
		\ar[u]
		\ar[d]
		\\
	\Dist{W}
		\ar[r]
		&
	\genfun{\widehat{W_{\Pi}}, a_{W}^{*}\Dens{W}}
		&
	\genfun{W_{\Pi},\Dens{W}\big|_{W_{\Pi}}}
		\ar[l]_{e}
}\]
It is commutative as the diagram of subspaces
\[\xymatrix@C=0.2pc{
	{\left\{\begin{aligned}
	&\mu\in\Sm{U,\Dens{U}} \\
	&\text{ supported on } C_{\Pi}
	\end{aligned}\right\}}
		\ar[r]
		\ar[rrd]
		\ar[dd]
		&
	\Sm{U,\Dens{U}}
		\ar[r]
		&
	{\begin{aligned}
	&\Sm{\Pi, \Dens{\Pi}\otimes\Dens{U}^{*}\big|_{\Pi}}\\
	&\cong \Sm{\Pi,\Dens{U}\big|_{\Pi}}
	\end{aligned}}
		\\
		&
		&
	{\left\{\begin{aligned}
	&\mu\in\Sm{U,\Dens{U}} \\
	&\text{ supported on } C_{\Pi}
	\end{aligned}\right\}}
		\ar[u]
		\ar[d]
		\\
	{\begin{aligned}
	&\Sm{W,\Dens{U}\big|_{W}} \\
	&\cong \Sm{W,\Dens{W}}
	\end{aligned}}
		\ar[r]
		&
	\Sm{\widehat{W_{\Pi}}, a_{W}^{*}\Dens{W}}
		&
	\Sm{W_{\Pi},\Dens{W}\big|_{W_{\Pi}}}
		\ar[l]
}\]
is commutative. These subspaces are dense in the original spaces, and whence the original diagram commutes. Let $\eta'$ be the composition 
\[\Dist{C}^{\GGt,\chi}\subseteq 
 \Dist{C}\cap\genfun[{\left(\normal[U]{\Pi}\right)}]{U,\Dens{U}} \to
 \Dist{C_{\Pi}} \to
 \genfun{W_{\Pi},\Dens{W}\big|_{W_{\Pi}}}\]
We know that the map $\Dist{C}^{\GGt,\chi}\to\genfun{\widehat{W_{\Pi}},a_{W}^{*}\Dens{W}}$ is the composition of $e'\circ\eta$, where $e'$ is the inclusion 
\[\begin{aligned}
	\genfun{W_{\Pi},\Dens{W}\big|_{W_{\Pi}}}^{\Gmt[F],\chi}
	& = \genfun{\widehat{W_{\Pi}},a_{W}^{*}\Dens{W}}^{\GGt,\chi}		\\
	& \hookrightarrow  \genfun{\widehat{W_{\Pi}},a_{W}^{*}\Dens{W}}
\end{aligned}\]
Then, by the commutativity of the diagram, the above composition also equals to $e\circ \eta'$. $e'$ decomposes as $e\circ e''$, where $e''$ is the embedding
\[
	\genfun{W_{\Pi},\Dens{W}\big|_{W_{\Pi}}}^{\Gmt[F],\chi}
	\hookrightarrow \genfun{W_{\Pi},\Dens{W}\big|_{W_{\Pi}}}
\]
Thus
\[
e\circ e''\circ \eta = e'\circ \eta = e\circ \eta'
\]
As $e$ is injective, $e''\circ \eta=\eta'$
\end{proof}

\begin{proof}[Proof of \cref{D}]
Using the notations of the proof, we have a map 
\[\begin{aligned}
	\eta' = e''\circ \eta:
	\Dist{C}^{\GGt,\chi}
	& \subseteq \Dist{C} \cap \genfun[{\left(\normal[U]{\Pi}\right)^{c}}]{U}		\\
	& \to \Dist{C_{\Pi}}															\\
	& \to \genfun{W_{\Pi},\Dens{W}\big|_{W_{\Pi}}}
\end{aligned}\] 
As all maps, apart from the inclusion, are defined over $\Gmt[F]$ and the inclusion still holds for the $\left(\Gmt[F],\chi\right)$-invariants, we get a map
\[\begin{aligned}
	\left(e''\right)^{\Gmt[F],\chi}\circ \eta: 
	\Dist{C}^{\GGt,\chi}
	& \subseteq \left(\Dist{C} \cap \genfun[{\left(\normal[U]{\Pi}\right)^{c}}]{U}\right)^{\Gmt[F],\chi}		\\
	& \to \Dist{C_{\Pi}}^{\Gmt[F],\chi}				\\
	& \to \genfun{W_{\Pi},\Dens{W}\big|_{W_{\Pi}}}^{\Gmt[F],\chi}
\end{aligned}\] 
By its definition $e''$ is the identity map. And so we get a decomposition of $\eta$ that goes through $\Dist{C_{\Pi}}^{\Gmt,\chi}$. Under our identification $\Pi\cong\Aff[F]{2}$, $C_{\Pi}=\{xy=0\}$. By \cref{realCross}, $\Dist{\{xy=0\}}^{\Gmt,\chi}=0$, and whence $\eta$ decomposes through a null space, i.e. $\eta = 0$. As $\eta$ is injective, $\Dist{C}^{\GGt,\chi}=0$.
\end{proof}

\newpage

\bibliography{mybib}{}
\bibliographystyle{alpha}

\end{document}